\newtheorem{thm}{Theorem\hskip 5mm}[section]
\newtheorem{theorem}{Theorem\hskip 5mm}[section]
\newtheorem{prop}[thm]{Proposition\hskip 5mm}
\newtheorem{cor}[thm]{Corollary\hskip 5mm}
\newtheorem{lem}[thm]{Lemma\hskip 5mm}
\newtheorem{exa}[thm]{Example\hskip 5mm}
\newtheorem{note}[thm]{Note\hskip 5mm}
\def\r{{\mathfrak r}}
\def\U{{\mathrm U}}
\def\C{{\bf C}}
\def\Z{{\bf Z}}
\def\Sp{{\mathrm {Sp}}}
\def\l{{\lambda}}
\def\GL{{\mathrm{ GL}}}
\def\U{{\mathrm{ U}}}
\def\m{{\mathfrak m}}
\def\i{{\mathfrak i}}
\def\r{{\mathfrak r}}
\begin{document}

\title[Weil representations]
{The Weil representation of a unitary group associated to a ramified
quadratic extension of a finite local ring}

\author{Allen Herman}
\address{Department of Mathematics and Statistics, Univeristy of Regina, Canada}
\email{Allen.Herman@uregina.ca}
\thanks{Both authors were supported in part by NSERC Discovery Grants.}

\author{Fernando Szechtman}
\address{Department of Mathematics and Statistics, Univeristy of Regina, Canada}
\email{fernando.szechtman@gmail.com}

\keywords{Weil representation, unitary group, local ring}

\subjclass[2000]{Primary 20G05; Secondary 20C15,20H25,11E39,11E57}

\begin{abstract}
We find all irreducible constituents of the Weil representation of
a unitary group $U_m(A)$ of rank $m$ associated to a ramified
quadratic extension $A$ of a finite, commutative, local and
principal ring $R$ of odd characteristic. We show that this Weil
representation is multiplicity free with monomial irreducible
constituents. We also find the number of these constituents and
describe them in terms of Clifford theory with respect to a
congruence subgroup. We find all character degrees in the special
case when $R$ is a field.
\end{abstract}

\maketitle

\section{Introduction}

Let $K$ be a non-archimidean local field with ring of integers
$\mathcal{O}$, maximal ideal $\mathfrak p$ and residue field
$F_q=\mathcal{O}/\mathfrak p$ of odd characteristic $p$. Let
$m\geq 1$ and consider the Weil representation $W:\Sp_{2m}(K)\to
\mathrm{GL}(\mathcal{V})$ of the symplectic group $\Sp_{2m}(K)$.
This is a projective representation defined over an infinite
dimensional complex vector space~$\mathcal{V}$. The restriction of
$W$ to the maximal compact subgroup $\Sp_{2m}(\mathcal{O})$ is an
ordinary representation whose decomposition into irreducible
constituents was determined by Prasad \cite{16}. These
constituents are the trivial module together with two irreducible
modules for each factor group $\Sp_{2m}(\mathcal{O}/{\mathfrak
{p}}^{2\ell})$, as $\ell$ runs over all positive integers, which
are precisely the two irreducible constituents of the top layer of
the Weil representation of $\Sp_{2m}(\mathcal{O}/\mathfrak
p^{2\ell})$ constructed in \cite{1} using different methods. While
Prasad's decomposition does not involve odd powers of $\mathfrak
p$, Cliff and McNeilly \cite{17} obtained an analogous
decomposition that only involves odd powers of $\mathfrak p$ by
restricting $W$ to another maximal compact subgroup of
$\Sp_{2m}(K)$ and, again, the resulting constituents are those
appearing in~\cite{1}.

Let $F$ be a quadratic extension of $K$, with ring of integers
$\mathcal{R}$ and maximal ideal~$\mathcal{P}$. Let $U_m(F)$ be a
unitary group of rank $m$ associated to $F/K$. We may imbed
$U_m(F)$ in $\Sp_{2m}(K)$ and consider the restriction of $W$ to
$U_m(F)$. This raises the question as to what is the decomposition of
the Weil representation of $U_m(F)$ when restricted to a maximal
compact subgroup.  This question is what motivates us to consider the decomposition problem
for the Weil representation of unitary groups $U_m(\mathcal{R}/\mathcal{P}^\ell)$ for $\ell\geq 1$.  Our problem depends in an essential manner on whether the extension $F/K$ is ramified or unramified.

In the unramified case let $A=\mathcal{R}/\mathcal{P}^\ell$ and $R=\mathcal{O}/\mathfrak {p}^\ell$.
Then $A=R[\sqrt{x}]$, where $x$ is a non-square
unit of $R$. When $\ell=1$ then $A=F_{q^2}$, which is the classical
case solved by G\'{e}rardin \cite{6}. The case of arbitrary
$\ell$ was solved by Szechtman \cite{3}.

The ramified case remains open and constitutes the main goal of
this paper. In this case let $A=\mathcal{R}/\mathcal{P}^{2\ell}$
and $R=\mathcal{O}/\mathfrak {p}^\ell$. Then $A=R[\sqrt{x}]$,
where $Rx$ is the maximal ideal of $R$. We find all irreducible
constituents of the Weil representation of $U_m(A)$. The Weil
representation is multiplicity free and all its irreducible
constituents are monomial. A description of each constituent in
terms of Clifford theory with respect to a suitable normal abelian
subgroup is given. We also find the total number of irreducible
constituents of the Weil module, while their degrees are given
only in the case $\ell=1$, i.e. when $A=F_q[t]/(t^2)$. The
calculation of the degrees in the general case is equivalent to
finding the index in a unitary group $U_m(B)$, where $B$ is
quotient of $A$, of the stabilizer of a basis vector of arbitrary
length. This is a non-trivial problem, specially for non-unit
lengths, whose solution appears in \cite{15}.

What makes the ramified case interesting and more complicated is
that the norm map $A^{\times}\to R^{\times}$ is not surjective, unlike what
happens in the unramified case. To exemplify one of the
consequences of a defective norm map we mention this: while in the
unramified case there is only one non-degenerate hermitian form of
rank $m$ over $A$ up to equivalence, in the ramified case there
are two of these and, more importantly, {\it the number of
irreducible constituents of the Weil module as well as their
degrees depend on the choice of the actual form; these character
degrees depend on the underlying form even when $m$ is odd, when
the isomorphism type of $U_m(A)$ is uniquely determined}. Nothing
like this happens in the symplectic or unramified unitary cases.

Interest in the Weil representation shows no sign of slowing
down. Since Weil's original paper \cite{12} dealing with classical
groups over local fields, several authors have directed attention
to the subject. Howe \cite{13} looked at the context in which the
Weil representation appears, namely through the action of the
symplectic group on the Heisenberg group, with the aim of
generalizing it to other groups. Shortly afterwards G\'{e}rardin
\cite{6} defined Weil representations for general linear,
symplectic  and unitary groups over finite fields and found their
irreducible constituents. He also gave an explicit character
formula in the unitary case.  Gow \cite{19} found the character fields and Schur indices of the irreducible constituents of the Weil representation of the symplectic group in the finite field case.
Tiep and Zalesskii \cite{18} gave various characterizations of the
Weil representation of symplectic and unitary groups in terms of
its restriction to standard subgroups.  Recently, Thomas \cite{20} gave a fairly explicit formula for the character of the Weil representation of the symplectic group in the finite field case.

The Weil representation of the special unitary group $SU_2(T)$,
where $T$ is a fairly general finite ring with involution, is
considered in a series of papers by Guti\'{e}rrez, Pantoja, and
Soto-Andrade. Much of their work is centered around finding a
Bruhat presentation for $SU_2(T)$. Once this is achieved they
construct a generalized Weil representation \cite{4} by assigning
to each Bruhat generator a linear operator, analogous to the Weil
operators arising in the classical case, and then verify that
the defining relations for $SU_2(T)$ hold.

Our paper is organized as follows. \S\ref{HI} recalls the
definition of Weil representation in a context that is
sufficiently general to apply to symplectic, unitary and other
groups. \S\ref{iur} describes a general method to imbed a unitary
group into a symplectic group, provides numerous examples of this
phenomenon, and shows that the Weil representation is compatible
with this type of imbedding. More precisely, one may restrict the
Weil representation of a symplectic group to a unitary group or
consider a Weil representation of the unitary group directly: they
will agree up to a linear character. \S\ref{con} constructs a Weil
module $X$ for $U_m(A)$ that takes advantage of the special
features of the ring $A$. \S\ref{secsta} lays the foundation for
an analysis of the Clifford theory of $X$ with respect to a
specific congruence subgroup of $U_m(A)$. \S\ref{sectop}
determines and describes in terms of Clifford theory all
irreducible constituents of the top layer of $X$, while
\S\ref{seccount} finds the exact number of them. \S\ref{secbot}
shows that the bottom layer of $X$ is trivial if $\ell=1$ or a
Weil module for a unitary group $U_m(B)$ over a quotient ring $B$
of $A$; this allows us, in \S\ref{secfull}, to obtain the full
decomposition of $X$ by repeatedly peeling off one top layer after
the other. \S\ref{secdeg} refers to the degrees of the components
of the top layer of $X$. Finally \S\ref{seclast} considers the
special case $\ell=1$, revisits the decomposition of $X$, and
explicitly displays all character degrees.

\section{The Schr\"{o}dinger and Weil representations}
\label{HI}

Before attempting to decompose the restriction to a unitary group
of the Weil representation of a symplectic group it will be well
to recall the definition of Weil representation, as well as the
auxiliary Schr$\mathrm{\ddot{o}}$dinger representation. We will do
this in a fairly general setting, to take advantage of this
flexibility in \S\ref{con}.

Rather than beginning, as in \cite{1}, with a non-degenerate,
alternating bilinear form $f:V\times V\to R$, where $R$ is finite
ring of odd characteristic and $V$ is a finitely generated
$R$-module, we will merely assume here that $R$ and $V$ are finite
additive abelian groups of odd order and that $f:V\times V\to R$
is a bi-additive function, in the sense that
$$
f(u+v,w)=f(u,w)+f(v,w),\; f(u,v+w)=f(u,v)+f(u,w),\quad u,v,w\in V.
$$
Associated to $f$ we have the isometry group $G=G(f)$ and
Heisenberg group $H=H(f)$.
The group $G$ consists of all group automorphisms of $V$ that preserve $f$,
i.e.
$$
G=\{g\in \mathrm{Aut}(V)\,\vert\, f(gu,gv)=f(u,v)\text{ for all }u,v\in V\}.
$$
The group $H$ is comprised of all pairs $(r,v)$, $r\in R$ and $v\in V$, with multiplication
$$
(r,v)(s,w)=(r+s+f(v,w),v+w).
$$
The associative law follows from the fact that $f$ is bi-additive.
Moreover, the neutral element of $H$ is $(0,0)$, and the inverse
of $(r,v)$ is
$$
(r,v)^{-1}=(-r+f(v,v),-v).
$$
Thus $H$ is indeed a group. Conjugation in $H$ is given by
\begin{equation}
\label{conj}
(r,v)(s,w)(r,v)^{-1}=(s,w)(f(v,w)-f(w,v),0).
\end{equation}
In particular, $(R,0)$ is a central subgroup of $H$, and it will
be identified with $R^+$.

Clearly $G$ acts by automorphisms on $H$ as follows:
$$
^g(r,v)=(r,gv).
$$

Since $R$ is a finite additive abelian group of odd order,
multiplication by 2 is an automorphism of $R$, whose inverse will
be denoted as multiplication by~$1/2$. We thus define the
alternating and symmetric parts of $f$ as follows:
$$f^-(u,v)=(f(u,v)-f(v,u)))/2,\quad f^+(u,v)=(f(u,v)+f(v,u)))/2,\quad u,v\in V.$$
Thus
$$
f=f^-+f^+\text{ and }G(f)=G(f^-)\cap G(f^+).
$$

For $g\in G$ and a representation $T:H\to\GL(X)$, the conjugate representation $T^g:H\to\GL(X)$ is defined by
$T^g(h)=T({}^g h)$ for all $h\in H$. We say that $T$ is $G$-invariant if $T$ is equivalent to $T^g$ for all $g\in G$.

We fix a linear character $\l:R^+\to \C^{\times}$ for the remainder of
this section. By a Schr\"{o}dinger character of type
$\l$ we mean a $G$-invariant irreducible character of $H$ whose
restriction to $R^+$ is a multiple of $\l$.

\begin{thm}
\label{sch} There is one and only one irreducible character of
$H(f)$ lying over $\l$ and invariant under the central involution,
say $\iota$, of $G(f)$, given by~$v\mapsto -v$. \\Moreover, this
character is actually $G(f)$-invariant, that is, it is a
Schr${\ddot{o}}$dinger character of $H(f)$ of type $\l$. Its degree
is equal to $\sqrt{|V|}/\sqrt{|V(\l)|}$, where
$$
 V(\l)=\{v\in V\,|\, \l(f(v,u)-f(u,v))=1\text{ for
all }u\in V\}=\{v\in V\,|\, \l(f^-(v,V))=1\}.
$$
\end{thm}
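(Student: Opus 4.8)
The plan is to reduce to a faithful $\l$, identify the centre of $H=H(f)$, invoke the standard ``group of central type'' dichotomy for $H$, and then use the central involution $\iota$ to single out one central character; $G(f)$-invariance will then come for free.

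First I would dispose of the non-faithful case. The subgroup $N=(\ker\l,0)$ is central in $H$, fixed pointwise by $G(f)$ and by $\iota$, and $H/N$ is canonically the Heisenberg group $H(\bar f)$ of the bi-additive map $\bar f\colon V\times V\to R/\ker\l$ obtained from $f$; one checks at once that $V(\bar\l)=V(\l)$, where $\bar\l$ is the faithful character of $R/\ker\l$ induced by $\l$. Since the irreducible characters of $H$ over $\l$ are exactly the inflations of those of $H(\bar f)$ over $\bar\l$ and $\iota$ descends, we may assume $\l$ faithful. Then \eqref{conj} shows the centre of $H$ is $Z=\{(r,v)\mid f^-(v,V)=0\}$, and faithfulness of $\l$ makes this radical of $f^-$ equal to $V(\l)$, so $[H:Z]=|V|/|V(\l)|$.

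Next comes the central-type step: for every linear character $\omega$ of $Z$ with $\omega|_{R^+}=\l$ there is exactly one irreducible character of $H$ lying over $\omega$, and its degree is $\sqrt{[H:Z]}$. This is routine once one observes, using \eqref{conj} and faithfulness of $\l$, that the $\omega$-twisted commutator pairing on $H/Z$ is non-degenerate: any irreducible $\chi$ over $\omega$ then vanishes off $Z$, whence $\chi(1)^2=|H|/|Z|$, while $\sum_{\chi/\omega}\chi(1)^2=\dim\C[H]e_\omega=[H:Z]$ leaves room for only one such $\chi$. Now the point of the argument: the map $(r,v)\mapsto v$ identifies $Z/R^+$ with the radical $\r$ of $f^-$, so the set of characters of $Z$ over $\l$ is a torsor under $\widehat{\r}$ on which $\iota$ acts by an affine involution with linear part $\psi\mapsto\psi^{-1}$; since $|V|$, hence $|\r|$, is odd, squaring is bijective on $\widehat{\r}$ and this involution has a unique fixed point $\omega_0$. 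An irreducible $\chi$ of $H$ over $\l$ is $\iota$-invariant iff its central character is (using the central-type step for one direction); so, since the $\iota$-invariant central characters over $\l$ reduce to the single $\omega_0$, there is exactly one $\iota$-invariant irreducible $\chi_0$ over $\l$, namely the one lying over $\omega_0$, and $\chi_0(1)=\sqrt{|V|}/\sqrt{|V(\l)|}$. Finally, for $g\in G(f)$ the character $\chi_0^g$ is again irreducible, lies over $\l$ because $g$ fixes $R^+$ pointwise, and is $\iota$-invariant because $\iota$ is central in $G(f)$, so $(\chi_0^g)^\iota=\chi_0^{g\iota}=\chi_0^{\iota g}=(\chi_0^\iota)^g=\chi_0^g$; uniqueness forces $\chi_0^g=\chi_0$, i.e.\ $\chi_0$ is a Schr\"odinger character of type $\l$.

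The step I expect to be the real crux---as opposed to bookkeeping---is the description of the $\iota$-action on the torsor of central characters together with the appeal to the oddness of $|V|$ to extract a unique square root: this is precisely where the hypotheses ``odd order'' and ``$v\mapsto -v$'' earn their keep, and it yields existence and uniqueness in one stroke. (An alternative is to exhibit $\chi_0$ directly as $\mathrm{Ind}_M^H\mu$ with $M=\{(r,v)\mid v\in L\}$ and $\mu(r,v)=\l(r)\l(-\tfrac12 f(v,v))$, where $L\supseteq V(\l)$ satisfies $L=L^\perp$ with respect to the pairing $(v,w)\mapsto\l(f(v,w)-f(w,v))$; Mackey's criterion gives irreducibility, the difficulty being relocated to the construction of such an $L$, that is, a polarization of the induced non-degenerate alternating pairing on $V/V(\l)$.)
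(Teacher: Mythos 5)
Your proof is correct, but it takes a genuinely different route from the paper's. The paper first reduces the general bi-additive $f$ to its alternating part $f^-$ via the shearing isomorphism $\beta(r,v)=(r-f(v,v)/2,v)$, then constructs the Schr\"odinger character explicitly by inducing a linear character $\rho$ from $(R,V_1)$ for a maximal ``polarization'' $V_1$ (a maximal subgroup with $\l(f(V_1,V_1))=1$); uniqueness among $\iota$-invariant characters and $G(f)$-invariance are then deferred to Proposition~2.1 of the cited reference. You instead bypass the reduction to $f^-$ (you handle general $f$ at once, because the conjugation formula and hence the centre of $H(f)$ depend only on $f^-$), reduce to faithful $\l$ by quotienting out $(\ker\l,0)$, identify the centre $Z=(R,V(\l))$ exactly, and run the fully-ramified/central-type argument: each extension $\omega$ of $\l$ to $Z$ carries a unique irreducible of $H$, of degree $\sqrt{[H:Z]}$, because the $\l$-twisted commutator pairing on $H/Z$ is non-degenerate and forces characters over $\omega$ to vanish off $Z$. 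You then push $\iota$ onto the torsor of such $\omega$'s, where it acts affinely with linear part $\psi\mapsto\psi^{-1}$, and oddness of $|V(\l)|$ gives a unique fixed $\omega_0$, hence a unique $\iota$-invariant irreducible; $G(f)$-invariance then follows from $\iota$ being central in $G(f)$ exactly as in the paper. What your approach buys: a self-contained proof of uniqueness (no external citation), the degree formula falling out immediately as $\sqrt{[H:Z]}=\sqrt{|V|/|V(\l)|}$, and no need for the $\beta$-isomorphism as a preliminary step. What the paper's approach buys: an explicit monomial realization of the Schr\"odinger character as $\mathrm{ind}_{(R,V_1)}^H\rho$, which is reused later (e.g.\ in \S\ref{con}) and matches the alternative you sketch at the end of your argument. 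Both proofs are sound; yours is arguably the cleaner existence-and-uniqueness argument, the paper's the more useful construction.
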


\noindent{\it Proof.} We first consider the case $f=f^-$. Let $V_1$ be a subgroup of $V$ maximal subject to
$$
\l(f(v,u))=1,\quad v,u\in V_1.
$$
We extend $\l$ to a linear character $\rho$ of the normal subgroup
$(R,V_1)$ of $H$ by setting $\rho(r,u)=\l(r)$. Let $(s,v)$ belong to the
stabilizer of $\rho$ in $H$. Since $f=f^-$ and $R$ has odd order,
it follows from (\ref{conj}) that $\l(f(v,V_1))=1$. From the
maximality of $U$ and the fact that $f(v,v)=f^-(v,v)=0$ we see
that $v\in V_1$, so $(s,v)\in (R,V_1)$. By Clifford theory,
$\chi=ind_{(R,V_1)}^H \rho$ is an irreducible character of $H$ lying
over $\l$. The fact that $\chi$ is $G(f)$-invariant and that no
other $\iota$-invariant irreducible character of $H$ lies over
$\l$ follows as in the proof of Proposition 2.1 of \cite{1}.

We move to the general case. Consider the map
$\beta:H(f)\to H(f^-)$ given by
\begin{equation}
\label{compt0}
\beta(r,v)=(r-f(v,v)/2,v).
\end{equation}
We claim that $\beta$ is a group isomorphism.  Indeed, if $(r,v)$,
$(s,w) \in H(f)$ then
$$\begin{array}{rcl}
\beta[(r,v)(s,w)] &=& \beta[(r+s+f(v,w),v+w)] \\
                          &=& (r+s+f(v,w)-f(v+w,v+w)/2,v+w) \\
                          &=& (r+s-f(v,v)/2-f(w,w)/2+f^-(v,w), v+w) \\
                          &=& (r -f(v,v)/2,v) (s -f(w,w)/2,w) \\
                          &=& \beta(r,v) \beta(s,w),
                  \end{array}$$
so $\beta$ is a group homomorphism. Since $\beta(r,v)=(0,0)$
forces $(r,v)=(0,0)$ and $\beta(r+f(v,v)/2,v)=(r-f(v,v)/2,v)$, our
claim is established. This isomorphism is compatible with the
actions of $G(f)$ on $H(f)$ and $H(f^-)$, in the sense that
\begin{equation}
\label{compt}
{}^g \beta(k)=\beta({}^g k),\quad k\in H(f).
\end{equation}
Let $T:H(f^-)\to\GL(X)$ be a Schr$\mathrm{\ddot{o}}$dinger representation of $H(f^-)$ of type $\l$. It then follows from (\ref{compt}) and the fact
that $G(f)\subseteq G(f^-)$ that $S=T\circ \beta:H(f)\to\GL(X)$
is a Schr$\mathrm{\ddot{o}}$dinger representation of $H(f)$ of type $\l$. Moreover, let $S_1:H(f)\to\GL(X_1)$ and $S_2:H(f)\to\GL(X_2)$ be $\iota$-invariant irreducible representations lying over $\l$. Then $T_1=S_1\circ\beta^{-1}$ and $T_2=S_2\circ\beta^{-1}$ are $\iota$-invariant irreducible representations of $H(f^-)$ lying over $\l$. By the first case, $T_1$ and $T_2$ are equivalent, whence $S_1$ and $S_2$ are equivalent.
\qed

\begin{cor}
\label{corsch}
 Suppose $V(\l)=0$. Then there is one and only one irreducible character of $H(f)$ lying over $\l$, namely
the Schr$\mathrm{\ddot{o}}$dinger character of type $\l$.
\end{cor}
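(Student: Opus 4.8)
The plan is to reduce the corollary to a dimension count, using the degree formula already supplied by Theorem~\ref{sch}. The key auxiliary fact is that, since $R^+$ is central in $H=H(f)$, the induced character $\mathrm{Ind}_{R^+}^{H}\l$ decomposes as $\sum_{\chi}\chi(1)\chi$, the sum running over the irreducible characters $\chi$ of $H$ lying over $\l$: by Frobenius reciprocity the multiplicity of $\chi$ in $\mathrm{Ind}_{R^+}^{H}\l$ is $\langle\chi|_{R^+},\l\rangle_{R^+}$, and since $\chi|_{R^+}$ is $\chi(1)$ times the (linear) central character of $\chi$, this multiplicity is $\chi(1)$ when that central character is $\l$ and $0$ otherwise. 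Comparing degrees on both sides yields
$$
\sum_{\chi\text{ over }\l}\chi(1)^2=[H:R^+]=|V|.
$$

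Now Theorem~\ref{sch} produces the Schr\"{o}dinger character $\theta$ of type $\l$, which lies over $\l$ and, because $V(\l)=0$, has degree $\sqrt{|V|/|V(\l)|}=\sqrt{|V|}$. Hence the single term $\theta(1)^2=|V|$ already exhausts the sum displayed above, so there cannot be any other irreducible character of $H$ over $\l$, and $\theta$ is the unique one. This is the assertion of the corollary.

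A self-contained alternative, not relying on the degree formula of Theorem~\ref{sch}, is to prove directly that any irreducible character $\chi$ of $H$ lying over $\l$ vanishes off $R^+$. If $\rho$ affords $\chi$ then $\rho(s,0)=\l(s)I$, while for $(r,v)\in H$ and $w\in V$ formula~(\ref{conj}) gives $(0,w)(r,v)(0,w)^{-1}=(f(w,v)-f(v,w),0)(r,v)=(2f^-(w,v),0)(r,v)$, so $\chi(r,v)=\l(2f^-(w,v))\chi(r,v)$ for all $w$. When $v\neq0$, the hypothesis $V(\l)=0$ (together with the fact that multiplication by $2$ is an automorphism of $V$) furnishes a $w$ with $\l(2f^-(w,v))\neq1$, forcing $\chi(r,v)=0$. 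Then $|H|=\sum_{h\in H}|\chi(h)|^2=|R|\,\chi(1)^2$, whence $\chi(1)^2=|V|$ for every such $\chi$; combined with the displayed identity this again leaves exactly one irreducible character over $\l$, necessarily the Schr\"{o}dinger character by Theorem~\ref{sch}.

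I do not anticipate a genuine obstacle; the only point that needs a moment's care is the identification of the condition $V(\l)=0$ with non-degeneracy of the $\C^{\times}$-valued skew pairing $(v,w)\mapsto\l(f(v,w)-f(w,v))$ on $V$ — this is exactly what makes the dimension count conclusive in the first argument and what kills $\chi$ off the centre in the second.
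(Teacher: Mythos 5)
Your proof is correct and rests on exactly the same observation as the paper's: when $V(\l)=0$ the Schr\"odinger character attains the maximal possible degree $\sqrt{[H:R^+]}$, which forces it to be the unique irreducible character over $\l$. The paper compresses this into the single phrase ``fully ramified'' with a citation to Isaacs, Exercise 6.3; your two arguments (the induced-character dimension count, and the direct vanishing-off-$R^+$ computation) are precisely the two standard ways of unpacking that citation.
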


\noindent{\it Proof.} In this case the
Schr$\mathrm{\ddot{o}}$dinger character of type $\l$ is has degree
$\sqrt{[H(f):R^+]}$, so is fully ramified (see \cite{10}, Exercise
6.3) and uniqueness follows.\qed

Let $S:H(f)\to\GL(X)$ be a Schr$\mathrm{\ddot{o}}$dinger representation of type $\l$. A Weil representation
of type $\l$ is a representation $W:G(f)\to\GL(X)$ satisfying:
$$
W(g)S(h)W(g)^{-1}=S({}^g h),\quad g\in G(f),h\in H(f).
$$

\begin{thm}
\label{w}
 There is one and only one Weil representation of type~$\l$, up to equivalence and multiplication by
a linear character of $G(f)$.
\end{thm}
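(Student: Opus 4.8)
The plan is to dispatch uniqueness first, as a quick consequence of Theorem \ref{sch} and Schur's Lemma, and then to obtain existence by linearizing a projective representation that emerges automatically from Theorem \ref{sch}. For uniqueness, suppose $W_1:G(f)\to\GL(X_1)$ and $W_2:G(f)\to\GL(X_2)$ are Weil representations of type $\l$, relative to Schr\"{o}dinger representations $S_1$ and $S_2$ of $H(f)$. By Theorem \ref{sch} there is only one Schr\"{o}dinger character of type $\l$, so $S_1$ and $S_2$ are equivalent; conjugating $W_1$ by an intertwiner $X_1\to X_2$ I may assume $X_1=X_2=X$ and $S_1=S_2=S$. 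Then for each $g\in G(f)$ the operator $W_2(g)^{-1}W_1(g)$ commutes with $S(h)$ for every $h\in H(f)$, hence is a scalar $\chi(g)1_X$ since $S$ is irreducible; because $W_1$ and $W_2$ are homomorphisms, $\chi$ is a linear character of $G(f)$ and $W_1=\chi\cdot W_2$. That is the claimed uniqueness.

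For existence I would first reduce to the case $f=f^-$. The map $\beta:H(f)\to H(f^-)$ of (\ref{compt0}) is an isomorphism with ${}^g\beta(k)=\beta({}^g k)$ for $g\in G(f)$ by (\ref{compt}), and $G(f)\subseteq G(f^-)$; so if $W'$ is a Weil representation of $G(f^-)$ relative to a Schr\"{o}dinger representation $T$ of $H(f^-)$ of type $\l$, then $W'$ restricted to $G(f)$ is a Weil representation of $G(f)$ relative to the Schr\"{o}dinger representation $T\circ\beta$ of $H(f)$, exactly as in the proof of Theorem \ref{sch}. So assume $f=f^-$ and fix a Schr\"{o}dinger representation $S:H(f)\to\GL(X)$ of type $\l$, furnished by Theorem \ref{sch}; that theorem also asserts $S$ is $G(f)$-invariant. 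Hence for every $g\in G(f)$ there is $W(g)\in\GL(X)$, unique up to a scalar by Schur's Lemma, with $W(g)S(h)W(g)^{-1}=S({}^g h)$, and $W(gg')^{-1}W(g)W(g')=\mu(g,g')1_X$ for scalars $\mu(g,g')$; thus $g\mapsto W(g)$ is a projective representation of $G(f)$ with $2$-cocycle $\mu\in Z^2(G(f),\C^\times)$.

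The entire force of the theorem is that $\mu$ is a coboundary, so that the $W(g)$ can be rescaled into an honest homomorphism; this is the step I expect to be the main obstacle. I would attack it by constraining $[\mu]$ from two sides. On one hand, normalizing $\det W(g)=1$ — legitimate since $W(g)$ is pinned down up to a scalar — forces $\mu$ to take values in $d$-th roots of unity, where $d=\dim X=\sqrt{|V|/|V(\l)|}$ by Theorem \ref{sch}; as $|V|$ is odd, $d$ is odd, so $[\mu]$ has odd order in $H^2(G(f),\C^\times)$. On the other hand, realizing $S$ concretely as $\mathrm{ind}_{(R,V_1)}^{H(f)}\rho$ as in the proof of Theorem \ref{sch} and writing the intertwiners $W(g)$ down explicitly as analogues of the classical Weil operators, the scalars that occur are normalized quadratic Gauss sums, which are roots of unity of $2$-power order; hence $[\mu]$ also has $2$-power order. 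Since $|V|$ is odd, the two constraints force $[\mu]=1$. This last half — the explicit construction and, in the alternative formulation, exhibiting the genuine Weil operators on a generating set of $G(f)$ and checking the defining relations — is the real substance, and is exactly the computation carried out in \cite{1} in the bilinear-form setting; I would either invoke it or reproduce it.
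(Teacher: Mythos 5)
Your proposal is essentially correct and follows the same route as the paper: both use Theorem~\ref{sch} together with Schur's lemma to produce a projective representation $g\mapsto P(g)$ with cocycle $\mu$, and both defer the substance of the linearization to Theorem~3.1 of \cite{1}. Your explicit Schur's-lemma treatment of uniqueness, which the paper leaves implicit, is a welcome addition, and the reduction to $f=f^-$ is harmless (it is exactly the mechanism of Proposition~\ref{x1}, whose proof uses only Theorem~\ref{sch}, so there is no circularity).

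One caveat worth registering on the existence half: your first constraint is solid and fully proved --- normalizing $\det P(g)=1$ shows $\mu$ is $d$-th-root-of-unity valued, and $d=\sqrt{|V|/|V(\l)|}$ is odd since $|V|$ is odd, so $[\mu]$ has odd order. Your second constraint, that a concrete choice of intertwiners yields a cocycle valued in $2$-power roots of unity via normalized Gauss sums, is asserted rather than proved, and is not obviously what the computation in \cite{1} actually establishes; \cite{1} proceeds by exhibiting genuine Weil operators on a generating set and verifying relations (your ``alternative formulation''), not by bounding the torsion order of $[\mu]$ a second time. So the two-constraint framing is an attractive conceptual repackaging, but as written the $2$-power claim carries the same evidentiary weight as the paper's bare citation --- you have not shortened the path, only re-described it. If you wanted this half to be self-contained you would need to actually compute the scalars attached to the Weil operators for Bruhat generators of $G(f)$ over the ring $R$ and check they are $2$-power roots of unity, which is essentially re-deriving \cite{1}.
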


\noindent{\it Proof.} Let $S:H(f)\to\GL(X)$ be a Schr$\mathrm{\ddot{o}}$dinger representation of $H(f)$ of type $\l$.
By Theorem \ref{sch}, given $g\in G$ there exists $P(g)\in\GL(X)$ such that
$$
P(g)S(h)P(g)^{-1}=S({}^g h),\quad g\in G(f),h\in H(f).
$$
By Schur's lemma, $P(g)$ is unique up to multiplication by a non-zero scalar.
By uniqueness, given $g_1,g_2\in G(f)$ there exists $f(g_1,g_2)\in \C^{\times}$ such that
$$P(g_1)P(g_2) = f(g_1,g_2)P(g_1g_2), \text{ for all } g_1,g_2 \in G(f).$$
Given $g\in G(f)$ we must find $c(g)\in\C^{\times}$ such that
$W:G(f)\to\GL(X)$, given by $W(g)=P(g)c(g)$, is a representation.
This can be achieved much as in the proof of Theorem 3.1 of
\cite{1}.\qed

In regards to the relationship between the Weil representations of $G(f)$ and $G(f^-)$ we have the following result.

\begin{prop}
\label{x1}
 Let $W:G(f^-)\to\GL(X)$
be a Weil representation of of type $\l$. Then its restriction to $G(f)$ is a Weil representation of type $\l$.
\end{prop}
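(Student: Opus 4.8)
The plan is to deduce this directly from the uniqueness statement in Theorem~\ref{w} together with the compatibility isomorphism $\beta$ constructed in the proof of Theorem~\ref{sch}. First I would observe that the groups $G(f)$ and $G(f^-)$ act on the \emph{same} Heisenberg group data in a compatible way: by equation~(\ref{compt}), the isomorphism $\beta:H(f)\to H(f^-)$ satisfies ${}^g\beta(k)=\beta({}^g k)$ for every $g\in G(f)$, and of course $G(f)=G(f^-)\cap G(f^+)\subseteq G(f^-)$. So given a Schr\"odinger representation $T:H(f^-)\to\GL(X)$ of type $\l$, the composite $S=T\circ\beta:H(f)\to\GL(X)$ is a Schr\"odinger representation of $H(f)$ of type $\l$, exactly as already noted in the proof of Theorem~\ref{sch}.

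Next I would unwind the intertwining identity. Suppose $W:G(f^-)\to\GL(X)$ is a Weil representation of type $\l$, so that
$$
W(g)T(h)W(g)^{-1}=T({}^g h),\quad g\in G(f^-),\ h\in H(f^-).
$$
I want to show that the restriction $W|_{G(f)}$, acting on the very same space $X$, is a Weil representation relative to the Schr\"odinger representation $S=T\circ\beta$ of $H(f)$. For $g\in G(f)$ and $k\in H(f)$ we compute, using the compatibility~(\ref{compt}),
$$
W(g)S(k)W(g)^{-1}=W(g)T(\beta(k))W(g)^{-1}=T({}^g\beta(k))=T(\beta({}^g k))=S({}^g k).
$$
This is precisely the defining relation for a Weil representation of $G(f)$ of type $\l$ attached to the Schr\"odinger representation $S$. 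Since Theorem~\ref{w} guarantees a Weil representation of type $\l$ is unique up to equivalence and twisting by a linear character of $G(f)$, and since any two choices of Schr\"odinger representation of type $\l$ are equivalent (Theorem~\ref{sch}), the restriction $W|_{G(f)}$ is a Weil representation of $G(f)$ of type $\l$, as claimed.

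There is essentially no hard step here: the proposition is a formal consequence of the functoriality of the construction in the $f\mapsto f^-$ passage. The only point that requires a little care is making sure the ``type $\l$'' bookkeeping is preserved under $\beta$ — but this is immediate since $\beta$ fixes $R^+$ pointwise (indeed $\beta(r,0)=(r,0)$), so $S$ and $T$ restrict to the same character on $R^+$, namely a multiple of $\l$. One should also note that $\iota\in G(f)$ (the involution $v\mapsto -v$ preserves every bi-additive $f$), so the $G(f)$-invariance of the underlying Schr\"odinger character is not in question. I would therefore present the proof as the short computation above, citing~(\ref{compt}), Theorem~\ref{sch}, and Theorem~\ref{w}.
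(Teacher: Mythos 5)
Your proof is correct and follows essentially the same route as the paper: use the isomorphism $\beta:H(f)\to H(f^-)$, the compatibility relation~(\ref{compt}), and the inclusion $G(f)\subseteq G(f^-)$ to transport the intertwining identity for $T$ on $H(f^-)$ to one for $S=T\circ\beta$ on $H(f)$, showing directly that $W|_{G(f)}$ is a Weil representation of type $\lambda$. The closing appeal to the uniqueness in Theorem~\ref{w} is harmless but unnecessary — the displayed computation already establishes the claim.
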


\noindent{\it Proof.} Let $T:H(f^-)\to\GL(X)$ be a
Schr$\mathrm{\ddot{o}}$dinger representation of $H(f^-)$ of type
$\l$ and let $W:G(f^-)\to\GL(X)$ be an associated Weil
representation. Thus
$$
W(g)T(h)W(g)^{-1}=T({}^g h),\quad g\in G(f^-),h\in H(f^-).
$$
Appealing to the isomorphism (\ref{compt0}) and the compatibility condition (\ref{compt}) we may write this as follows:
$$
W(g)T(\beta(k))W(g)^{-1}=T({}^g \beta(k))=T(\beta({}^g k)),\quad g\in G(f^-),k\in H(f).
$$
Letting $S=T\circ\beta:H(f)\to\GL(X)$ and using $G(f)\subseteq G(f^-)$, we see that
$$
W(g)S(k)W(g)^{-1}=S({}^g k),\quad g\in G(f),k\in H(f).
$$
Thus $S$ is Schr$\mathrm{\ddot{o}}$dinger of type $\l$ and the proof is complete.\qed

Suppose for the remainder of the paper that $R$ is a finite,
principal, commutative local ring of odd prime characteristic $p$.
Since $2$ is invertible in the subring $\Z/p\Z$ of $R$ we see that
$2\in R^{\times}$, the unit group of $R$.  Let $\m$ be the maximal ideal
of $R$ and let $F_q=R/\m$ be the residue field of $R$, where $q$
is a power of $p$. The nilpotency degree of $\m$ will be denoted
by $\ell\geq 1$.

For the remainder of this section we let $V$ be a non-zero free
$R$-module $V$ and  $f:V\times V\to R$ an alternating bilinear
form that is non-degenerate, in the sense that the associated
linear map $V\to V^*$, given by $v\mapsto f(v,-)$, is an
isomorphism.

\begin{lem} The $R$-module $V$ has a basis $\{u_1,\dots,u_m,v_1,\dots,v_m\}$ such that
$$
f(u_i,u_j)=0=f(v_i,v_j),\; f(u_i,v_j)=\delta_{ij},\quad 1\leq
i,j\leq m.
$$
\end{lem}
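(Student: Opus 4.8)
The plan is to prove the existence of a symplectic basis by induction on $m$, the rank of $V$ being $2m$, using the non-degeneracy of $f$ to extract one hyperbolic pair at a time and then pass to its orthogonal complement. First I would pick any nonzero $u_1 \in V$; by non-degeneracy the linear functional $f(u_1,-)$ is part of a basis of $V^*$, so in particular it is not identically zero, and in fact — because $u_1$ can be taken to be a member of an $R$-basis of $V$ (a free module over a local ring) — the functional $f(u_1,-):V\to R$ must be surjective. Surjectivity is the crucial point: it guarantees the existence of $v_1\in V$ with $f(u_1,v_1)=1$, which is a unit, rather than merely a nonzero element that might be a zero divisor in $R$. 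Granting surjectivity, the pair $\{u_1,v_1\}$ spans a free rank-$2$ submodule $P$ on which $f$ restricts to the standard hyperbolic form, since the Gram matrix $\begin{pmatrix}0&1\\-1&0\end{pmatrix}$ is invertible.

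Next I would form the orthogonal complement $P^{\perp}=\{w\in V\mid f(u_1,w)=f(v_1,w)=0\}$ and show $V=P\oplus P^{\perp}$ with $P^{\perp}$ free of rank $2(m-1)$ and $f|_{P^{\perp}}$ again non-degenerate. The decomposition follows by the usual projection formula: for $w\in V$ set $w'=w-f(v_1,w)u_1+f(u_1,w)v_1$ (signs adjusted so that $w'\in P^{\perp}$); then $w=(f(v_1,w)u_1-f(u_1,w)v_1)+w'$ exhibits the sum, and $P\cap P^{\perp}=0$ because any $au_1+bv_1$ orthogonal to both $u_1$ and $v_1$ forces $a=b=0$ (here again one uses that $f(u_1,v_1)$ is a unit). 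That $P^{\perp}$ is free of the expected rank is then immediate since it is a direct summand of the free module $V$ and $P$ is free of rank $2$; non-degeneracy of $f|_{P^{\perp}}$ follows because the associated map $P^{\perp}\to (P^{\perp})^*$ is obtained from the isomorphism $V\to V^*$ by restricting and corestricting along the splitting. Applying the inductive hypothesis to $(P^{\perp},f|_{P^{\perp}})$ yields a symplectic basis $\{u_2,\dots,u_m,v_2,\dots,v_m\}$, and adjoining $\{u_1,v_1\}$ completes the basis of $V$ with the required relations.

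The step I expect to be the main obstacle is justifying that $f(u_1,-):V\to R$ is surjective (equivalently, that one can choose $v_1$ with $f(u_1,v_1)$ a unit and not merely nonzero) — over a general ring this can fail, and it is precisely where the hypotheses that $R$ is local and $V$ is free are used. The clean way to see it: extend $u_1$ to an $R$-basis $u_1,e_2,\dots,e_{2m}$ of $V$; then $f(u_1,-)$ corresponds under $V\to V^*$ to a basis element of the dual basis only after a change of basis, but more directly, the image of $f(u_1,-)$ is an ideal $I\subseteq R$, and if $I\subseteq \m$ then $f(u_1,-)$ would factor through $V\to V/\m V$ landing in $\m$, contradicting that the reduction mod $\m$ of the non-degenerate form stays non-degenerate (the Gram matrix of $f$ is invertible over $R$, hence its reduction is invertible over the field $F_q$, so no nonzero vector — in particular not $\bar u_1$ — is in the radical). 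Hence $I=R$, i.e. $f(u_1,-)$ is onto. Once this is in hand the rest is the routine Gram–Schmidt-style bookkeeping sketched above, and the induction terminates at $m=0$ where there is nothing to prove.
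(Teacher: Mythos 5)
Your proof is correct and follows essentially the same plan as the paper's: both proceed by induction, producing a primitive vector paired with a companion via a unit value of $f$, and then using a Gram--Schmidt-style projection onto the orthogonal complement of the resulting hyperbolic plane. The only cosmetic difference is in justifying the unit pairing (the paper observes that $\m^{\ell-1}w_1$ would otherwise lie in the radical of $f$, while you reduce modulo $\m$ and invoke non-degeneracy of the reduced form over $F_q$).
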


\noindent{\it Proof.} Since $f$ is a non-degenerate alternating
form, we must have $\mathrm{rank}\, V>1$. Let $w_1,w_2,\dots,w_m$
be a basis of $V$. If $f(w_1, w_i)\in\m$ for all $i>1$
then~$f(\m^{\ell-1} w_1,V)=0$, a contradiction. We may assume
without loss of generality that $f(w_1,w_2)=1$. Set
$z_i=w_i-f(w_i,w_2)w_1+f(w_i, w_1)w_2$ for $i>2$. Then
$w_1,w_2,z_3,\dots,z_m$ is a basis of $V$ and
$f(w_1,z_j)=0=f(w_2,z_j)$. Thus the restriction of $f$ to the span
of $z_3,\dots,z_m$ is non-degenerate and the result follows by
induction.\qed

\medskip
It follows that $\mathrm{rank}\, V = 2m$ is even, and all
non-degenerate alternating bilinear forms on $V$ are equivalent.
Thus, the corresponding isometry groups are similar in $\GL(V)$.
The symplectic group associated to $f$ is
$$
\Sp_{2m}(R)=G(f)\cap \GL(V).
$$

A linear character $\l:R^+\to\C^{\times}$ is said to be primitive
if its kernel contains no non-zero
ideals of $R$. Such a linear character exists because $R$ has a
unique non-zero minimal ideal, namely $\m^{\ell-1}$.

\begin{cor} A Weil representation of $\Sp_{2m}(R)$ of primitive type $\l$ has degree $|R|^m$.
\end{cor}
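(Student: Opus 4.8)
The plan is to apply Theorem~\ref{w} together with the degree formula in Theorem~\ref{sch}, so the only real work is to compute $|V(\l)|$ for our particular $f$ and primitive $\l$. Since $f$ is alternating we have $f = f^-$, so $f^-(v,u) = f(v,u) - f(u,v) = 2f(v,u)$, and because $2 \in R^\times$ the condition $\l(f^-(v,V)) = 1$ is equivalent to $\l(f(v,V)) = 1$, i.e. $\l(f(v,V)) = \{1\}$. Thus $V(\l) = \{v \in V \mid f(v,u) \in \ker\l \text{ for all } u \in V\}$.

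First I would show $V(\l) = 0$. Suppose $v \in V(\l)$ is nonzero. Write $v$ in terms of the symplectic basis $\{u_1,\dots,u_m,v_1,\dots,v_m\}$ supplied by the Lemma, and let $\m^k$ (with $0 \le k < \ell$) be the smallest power of $\m$ containing all the coordinates of $v$; equivalently $v \in \m^k V \setminus \m^{k+1} V$. Then some coordinate of $v$ is a generator of $\m^k$, say the coordinate of $v$ along $u_i$ (the case of a $v_i$-coordinate is symmetric). Pairing $v$ with $v_i$ gives $f(v,v_i) = \pm(\text{that coordinate})$, which generates $\m^k$. Hence $f(v,V) \supseteq \m^k$, and in particular $f(v,V) \supseteq \m^{\ell-1}$, the unique minimal nonzero ideal of $R$. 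Since $\l$ is primitive, $\ker\l$ contains no nonzero ideal, so $\l(\m^{\ell-1}) \ne \{1\}$, contradicting $v \in V(\l)$. Therefore $V(\l) = 0$.

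Now Theorem~\ref{sch} gives that the unique Schr\"{o}dinger character of $H(f)$ of type $\l$ has degree $\sqrt{|V|}/\sqrt{|V(\l)|} = \sqrt{|V|}$. Since $V$ is free of rank $2m$ over $R$, we get $|V| = |R|^{2m}$, so this degree is $|R|^m$. By the definition of a Weil representation $W : G(f) \to \GL(X)$, the space $X$ is the space of the Schr\"{o}dinger representation $S : H(f) \to \GL(X)$, so $\dim X = |R|^m$; and by Theorem~\ref{w} a Weil representation of type $\l$ exists and is unique up to equivalence and twisting by a linear character of $G(f)$, none of which changes the degree. Restricting to $\Sp_{2m}(R) = G(f) \cap \GL(V)$ does not change the underlying space either. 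Hence a Weil representation of $\Sp_{2m}(R)$ of primitive type $\l$ has degree $|R|^m$, as claimed.

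The only step requiring care is the computation $V(\l) = 0$: one must correctly identify $V(\l)$ with the "radical of $f$ modulo $\ker\l$" using the invertibility of $2$, and then exploit both the non-degeneracy of $f$ (via the symplectic basis) and the primitivity of $\l$ (via the minimal ideal $\m^{\ell-1}$). Everything after that is a direct substitution into the formulas already established. \qed
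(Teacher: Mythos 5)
The paper states this corollary without proof, treating it as immediate from Theorem~\ref{sch} and Theorem~\ref{w}. Your proof is correct and takes the natural route: invoke the degree formula $\sqrt{|V|}/\sqrt{|V(\l)|}$ from Theorem~\ref{sch}, verify $V(\l)=0$, and observe that a Weil representation acts on the same space as the associated Schr\"odinger representation, so it has the same degree. Your verification that $V(\l)=0$ is fine, but the symplectic basis is not really needed: for $v\ne 0$, the set $f(v,V)$ is automatically an ideal of $R$ (by $R$-bilinearity in the second slot), and it is nonzero by non-degeneracy; primitivity of $\l$ then gives $\l(f(v,V))\ne\{1\}$ at once. (Also, a small typo: $f^-(v,u)=(f(v,u)-f(u,v))/2=f(v,u)$ when $f$ is alternating, not $2f(v,u)$; this does not affect your conclusion since $2\in R^\times$.)
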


\begin{thm}
\label{comega} Let $\widehat{V}$ be the permutation module
associated to $V$. Let $S:H(f)\to\GL(X)$ be a
Schr$\mathrm{\ddot{o}}$dinger representation of $H(f)$ of
primitive type $\l$ and let $W:\Sp_{2m}(R)\to \GL(X)$ be an
associated Weil representation. Then
$\widehat{V}\cong\mathrm{End}(X)$ as $\Sp_{2m}(R)$-modules. In
particular, if $G$ be a subgroup of $\Sp_{2m}(R)$, $\Omega$ is the
Weil character of the $G$-module $X$ and $O_G(V)$ stands for the
number of orbits of $G$ acting on $V$, then
$[\Omega,\Omega]=O_G(V)$.
\end{thm}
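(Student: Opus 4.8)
The plan is to identify $\mathrm{End}(X)$ with the image of the Schr\"{o}dinger representation $S:H(f)\to\GL(X)$ and exhibit there an $\Sp_{2m}(R)$-stable basis indexed by $V$. Since $S$ affords a Schr\"{o}dinger character, $X$ is an irreducible $\C H(f)$-module, so by the Jacobson density theorem the structure map $\C H(f)\to\mathrm{End}(X)$ is onto; and as $R^+$ is central in $H(f)$ and $S$ restricts to $R^+$ as a multiple of $\l$, we have $S(r,v)=\l(r)S(0,v)$, so $\{S(0,v):v\in V\}$ already spans $\mathrm{End}(X)$ (in particular $S(0,v)$ depends only on $v$). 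The first real step is to see this spanning set is a basis. For this I would consider the conjugation action $A\mapsto S(h)AS(h)^{-1}$ of $H(f)$ on $\mathrm{End}(X)$: it is trivial on $R^+$, hence factors through the abelian group $H(f)/R^+\cong V$, and from (\ref{conj}) one computes $S(s,w)S(0,v)S(s,w)^{-1}=\l\big(f(w,v)-f(v,w)\big)S(0,v)$, so $S(0,v)$ lies in the isotypic component of $\mathrm{End}(X)$ for the character $\psi_v$ of $V$ given by $\psi_v(w)=\l(f(w,v)-f(v,w))$. Eigenvectors for pairwise distinct characters of a finite abelian group are linearly independent, so it suffices that $v\mapsto\psi_v$ be injective; but $\psi_v=\psi_{v'}$ exactly when $v-v'\in V(\l)$, and $V(\l)=0$ here because $f$ is non-degenerate alternating (so $f^-=f$) while $\l$ is primitive: $f(v,V)$ is an ideal of $R$, and if it lies in $\ker\l$ it must vanish, forcing $v=0$ by non-degeneracy. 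Hence $\{S(0,v):v\in V\}$ is a basis of $\mathrm{End}(X)$ (and, as a check, $\dim\mathrm{End}(X)=|V|=|R|^{2m}$, consistent with $\dim X=|R|^m$).

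Next I would check that the linear isomorphism $\Phi:\widehat V\to\mathrm{End}(X)$ sending the standard basis vector $\delta_v$ of $\widehat V$ to $S(0,v)$ is $\Sp_{2m}(R)$-equivariant. This is forced by the defining property of the Weil representation: since ${}^g(0,v)=(0,gv)$ and $W(g)S(h)W(g)^{-1}=S({}^g h)$, we get $W(g)S(0,v)W(g)^{-1}=S(0,gv)$, which matches the permutation action $g\cdot\delta_v=\delta_{gv}$ on $\widehat V$. (The conjugation action on $\mathrm{End}(X)$ is insensitive to the linear-character ambiguity in $W$, so $\mathrm{End}(X)$ is a well-defined $\Sp_{2m}(R)$-module, as the statement presupposes.) This gives $\widehat V\cong\mathrm{End}(X)$ as $\Sp_{2m}(R)$-modules.

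For the final assertion, restrict to a subgroup $G\le\Sp_{2m}(R)$: as $G$-modules $\mathrm{End}(X)\cong X\otimes X^*$, so the character of $\mathrm{End}(X)$ at $g$ is $\Omega(g)\overline{\Omega(g)}=|\Omega(g)|^2$, whence $[\Omega,\Omega]_G=\langle\mathrm{char}\,\mathrm{End}(X),1\rangle_G=\dim\mathrm{End}(X)^G$; since $\mathrm{End}(X)^G\cong\widehat V^G$ and the fixed space of a permutation module has the orbit sums as a basis, $\dim\widehat V^G=O_G(V)$, and $[\Omega,\Omega]=O_G(V)$ follows. The argument is essentially formal once $V(\l)=0$ is in hand, so the only genuine point to get right is that reduction, from primitivity of $\l$ together with non-degeneracy of $f$; everything else is Stone--von Neumann-type bookkeeping plus Burnside's orbit-counting lemma.
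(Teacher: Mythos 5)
Your proof is correct; the paper simply cites Theorem 4.5 of reference [1] for the isomorphism $\widehat V\cong\mathrm{End}(X)$, and your Stone--von Neumann style argument (span $\mathrm{End}(X)$ by the $S(0,v)$ via density, note each is a common eigenvector for the conjugation action of $V\cong H(f)/R^+$ with character $\psi_v$, and deduce linear independence from $V(\l)=0$, which in turn follows from primitivity of $\l$ plus non-degeneracy of the alternating form) is the standard proof that is presumably what appears in that reference. The final step, identifying $[\Omega,\Omega]$ with the multiplicity of the trivial module in $\mathrm{End}(X)\cong\widehat V$ and counting orbits via Burnside, is exactly the paper's argument.
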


\noindent{\it Proof.} The first assertion is proven in Theorem 4.5 of \cite{1}. As for the second, we have
$\widehat{V}\cong\mathrm{End}(X)$ as $G$-modules, so the trivial $G$-module appears an equal number of times in both
of them, that is, $[\Omega,\Omega]=[\Omega\overline{\Omega},1_G]=O_G(V)$.\qed

\section{Imbedding unitary groups in symplectic groups}
\label{iur}

In this section we furnish a generic method to imbed a unitary group
inside a symplectic group, exhibit various examples of this nature
and, more importantly, prove that the Weil representation, as
defined in \S\ref{HI}, is compatible with this type of imbedding.
Thus the restriction to a unitary group of the Weil representation
of a symplectic group can be studied by looking directly at the
Weil representation of this unitary group, which is what we do
from \S\ref{con} onwards.

 We make the following assumptions throughout this section:
 $A$ is an associative $R$-algebra with identity endowed with an involution $*$ that fixes $R$ elementwise;
 $A$ is a free $R$-module of finite rank $s$; $V$ is a non-zero free right $A$-module of finite rank $m$;
 $h:V\times V\to A$ is
hermitian or skew-hermitian form relative to $*$, which means that
$h$ is $A$-linear in the second variable and satisfies
$$h(v,u)=\varepsilon h(u,v)^*,\quad u,v\in V,
$$
where $\varepsilon=-1$ in the skew-hermitian case and
$\varepsilon=1$ in the hermitian case; $h$ is non-degenerate,
i.e., the map $V\to V^*$, given by $v\mapsto h(v,-)$, is an
isomorphism of right $A$-modules, where $V^*$ is a right
$A$-module via
$$
(\alpha a)(v)=a^* \alpha(v),\quad v\in V,a\in A,\alpha\in V^*.
$$

\begin{lem}
\label{easref2} Let $d:A\to R$ be an $R$-linear map satisfying:

{\rm(C1)} If $a\in A$ and $d(ab)=0$ for all $b\in A$ then $a=0$.

{\rm(C2)} $d(a+\varepsilon a^*)=0$ for all $a\in A$.

\noindent Then the map $f:V\times V\to R$, given by
$$
 f(u,v)=d(h(u,v)),\quad u,v\in V
$$
is a non-degenerate alternating $R$-bilinear form and $G(h)$ is a
subgroup of $G(f)$. Consequently the unitary group
$$
\U_m(A)=G(h)\cap \GL_A(V)
$$
is a subgroup of the symplectic group
$$
\Sp_{sm}(R)=G(f)\cap \GL_R(V).
$$
\end{lem}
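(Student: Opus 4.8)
The plan is to verify each assertion in turn, working from the hypotheses (C1) and (C2) on the trace-like map $d$. First I would check that $f$ is $R$-bilinear: this is immediate since $h$ is additive in each variable and $d$ is $R$-linear, while the scalar interaction $f(u,va)$ for $a\in R$ reduces to $d(h(u,v)a)=d(h(u,v))a$ because $d$ is $R$-linear and $R$ is central and fixed by $*$. Next I would show $f$ is alternating, i.e.\ $f(v,v)=0$ for all $v$. We have $h(v,v)=\varepsilon h(v,v)^*$, so $h(v,v)+\varepsilon h(v,v)^* = 2h(v,v)$ in the $\varepsilon=1$ case and $=0$ automatically when $\varepsilon=-1$; in either case one writes $h(v,v)$ as $\tfrac12(h(v,v)+\varepsilon h(v,v)^*)$ (using that $2$ is invertible, and that $h(v,v)=\varepsilon h(v,v)^*$) and then (C2) gives $f(v,v)=d(h(v,v))=0$.

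Then I would prove non-degeneracy of $f$. Suppose $u\in V$ satisfies $f(u,v)=0$ for all $v\in V$. Since $h$ is non-degenerate as an $A$-form and $V$ is free, the elements $h(u,v)$ as $v$ ranges over $V$ fill out all of $A$ — more precisely, for a fixed $a\in A$ and any $v$, the vector $va$ lies in $V$ and $h(u,va)=h(u,v)a$, so the set $\{h(u,v):v\in V\}$ is a right ideal; combined with non-degeneracy (the map $v\mapsto h(u,v)$ is not identically zero unless $u=0$) one checks this right ideal is all of $A$. Hence $0=f(u,va)=d(h(u,v)a)$ for all $v,a$, so $d(ba)=0$ for all $b$ in a generating set, hence for all $b\in A$, and then (C1) forces — wait, (C1) is stated for the left argument, so I would instead pick $v_0$ with $h(u,v_0)$ a unit or at least use that $\{h(u,v)\}=A$ to get some $v$ with $h(u,v)=1$, whence $d(b)=d(h(u,v)b)=f(u,vb)=0$ for all $b$, and apply (C1) to conclude $h(u,v)=0$ for all $v$ is false unless $u=0$; cleaner: from $d(h(u,v)a)=0$ for all $v,a$ and surjectivity of $v\mapsto h(u,v)$ onto $A$ when $u\neq0$, get $d(ba)=0$ for all $b,a$, contradicting (C1) with $a$ arbitrary and $b=1$. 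So $u=0$, and $f$ is non-degenerate; in particular $\mathrm{rank}_R V = sm$ is forced to be even by the preceding lemma, which is consistent since $s=\mathrm{rank}_R A$.

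Finally I would show $G(h)\subseteq G(f)$: if $g$ preserves $h$ then $f(gu,gv)=d(h(gu,gv))=d(h(u,v))=f(u,v)$, and $g$ being an $A$-module (hence $R$-module) automorphism of $V$ means $g\in\mathrm{Aut}(V)$ as an additive group, so $g\in G(f)$. Intersecting with $\GL_A(V)\subseteq\GL_R(V)$ gives $\U_m(A)=G(h)\cap\GL_A(V)\subseteq G(f)\cap\GL_R(V)=\Sp_{sm}(R)$, and the symplectic group here genuinely has the claimed size because $f$ is a non-degenerate alternating form on a free $R$-module of rank $sm$. I expect the only genuinely delicate point to be the surjectivity argument $\{h(u,v):v\in V\}=A$ for $u\neq0$ — one must use freeness of $V$ as a right $A$-module together with non-degeneracy of $h$, rather than just $h(u,-)\neq0$, and handle the fact that $A$ need not be commutative; everything else is a routine unwinding of $R$-linearity and the (skew-)hermitian identity.
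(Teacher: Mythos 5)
Your check of $R$-bilinearity, the alternating property, and $G(h)\subseteq G(f)$ all match the paper's argument, though note in passing that your parenthetical claim that $h(v,v)+\varepsilon h(v,v)^*=0$ ``automatically when $\varepsilon=-1$'' is false: using $h(v,v)=\varepsilon h(v,v)^*$ one gets $h(v,v)+\varepsilon h(v,v)^*=2h(v,v)$ in \emph{both} cases, which is what your ``in either case'' sentence actually uses, so the final conclusion is fine.

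The genuine gap is in the non-degeneracy argument. You lean on the claim that for $u\neq 0$ the map $v\mapsto h(u,v)$ is surjective onto $A$, asserting that the nonzero right ideal $\{h(u,v):v\in V\}$ must be all of $A$. That is false: a nonzero right ideal in $A$ need not be $A$. Concretely, in the situations this lemma is applied to, $A$ is a local ring with maximal ideal $\mathfrak r$, and if $0\neq u\in\mathfrak r V$ then $h(u,v)\in\mathfrak r$ for every $v$, so the image is a proper ideal; non-degeneracy of $h$ only says $h(u,-)$ is a \emph{nonzero} functional, not a surjective one. Your instinct to consider $f(u,vb)$ for varying $b$ is exactly right, but the justification should invoke (C1) directly rather than surjectivity: since $u\neq 0$ there is some $v$ with $a:=h(u,v)\neq 0$, and by (C1) (applied to this $a$) there is $b\in A$ with $d(ab)\neq 0$; then $f(u,vb)=d(h(u,v)b)=d(ab)\neq 0$. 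This is what the paper does, and it makes no use of (and does not need) the false surjectivity claim.
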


\noindent{\it Proof.} It is clear that $f$ is
$R$-bilinear and automatic that
$$
G(h)\subseteq G(f).
$$
We next claim that $f$ is alternating. Indeed,
let $u\in V$. Then
$$
2h(u,u)=h(u,u)+h(u,u)=h(u,u)+\varepsilon h(u,u)^*,
$$
so by (C2)
$$2f(u,u)=d(2h(u,u))=0.$$

We finally claim that $f$ is non-degenerate. Indeed, let $0\neq
u\in V$. Since $h$ is non-degenerate there is $v\in V$ such that $a=h(u,v)\neq 0$. By
(C1) there is $b\in A$ such that $d(ab)\neq 0$, whence
$$
f(u,vb)=d(h(u,vb))=d(h(u,v)b)=d(ab)\neq 0.
$$
Thus $f$ induces a monomorphism $V\to V^*$, and hence an
isomorphism since $|V|=|V^*|$.\qed

\medskip

It is natural at this point to ask about the relationship
between the Weil representations of $\Sp_{sm}(R)$ and $\U_m(A)$.

\begin{thm}
\label{resu}
 Let  $\l:R^+\to\C^{\times}$ be a primitive linear character
and let $\mu:A^+\to\C^{\times}$ be defined by $\mu(a)=\l(d(a))$ for $a\in
A$. Assume the hypotheses of Lemma \ref{easref2} hold. Then $\mu$ is
primitive, in the sense that the kernel of $\mu$ contains no
non-zero right ideals. Moreover, the restriction to $\U_m(A)$ of a
Weil representation of $\Sp_{sm}(R)$ of type $\l$ is a Weil
representation of  $\U_m(A)$ of type~$\mu$.
\end{thm}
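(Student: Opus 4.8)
The plan is to handle the two assertions separately. For the primitivity of $\mu$, let $I$ be a non-zero right ideal of $A$; since $R$ lies in the centre of $A$, $I$ is an $R$-submodule, so $d(I)$ is an ideal of $R$. Choosing $0\neq a\in I$ and applying (C1) produces $b\in A$ with $d(ab)\neq 0$, and $ab\in I$, so $d(I)\neq 0$. As $R$ is local and principal, $d(I)$ contains the unique minimal ideal $\m^{\ell-1}$, and primitivity of $\l$ means $\m^{\ell-1}\not\subseteq\ker\l$; hence $d(I)\not\subseteq\ker\l$, i.e. $\mu(I)=\l(d(I))$ is not identically $1$, which is exactly the primitivity of $\mu$.

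For the second assertion the key device — parallel to the map $\beta$ used in the proofs of Theorem~\ref{sch} and Proposition~\ref{x1} — is $\phi:H(h)\to H(f)$ defined by $\phi(a,v)=(d(a),v)$. Using $f=d\circ h$ and additivity of $d$, a short computation shows $\phi$ is a group homomorphism, and since ${}^g(a,v)=(a,gv)$ in both Heisenberg groups one gets the compatibility $\phi({}^gk)={}^g\phi(k)$ for all $g\in G(h)$ (which lies in $G(f)$ by Lemma~\ref{easref2}), and in particular for the central involution $\iota:v\mapsto -v$. The step I expect to be the main obstacle — though still short — is that $\phi$ is \emph{surjective}, i.e. $d(A)=R$: the ideal $d(A)$ of $R$ is non-zero, since $d(1\cdot b)$ cannot vanish for all $b$ by (C1), so were it proper it would lie in $\m=\pi R$, and then for $v_0$ a member of an $R$-basis of $V$ we would have $\pi^{\ell-1}v_0\neq 0$ yet $f(\pi^{\ell-1}v_0,V)=\pi^{\ell-1}f(v_0,V)\subseteq\pi^{\ell-1}\m=\m^{\ell}=0$, contradicting non-degeneracy of $f$. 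Once this is in hand everything else is formal.

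Granting surjectivity, let $S:H(f)\to\GL(X)$ be a Schr\"odinger representation of type $\l$ underlying the given Weil representation $W:\Sp_{sm}(R)\to\GL(X)$, so $W(g)S(k')W(g)^{-1}=S({}^gk')$ for all $g\in\Sp_{sm}(R)$, and set $S'=S\circ\phi:H(h)\to\GL(X)$. Then $S'$ is irreducible because $\phi$ is onto and $S$ is irreducible; from $S(r,0)=\l(r)\,\mathrm{id}$ we get $S'(a,0)=\l(d(a))\,\mathrm{id}=\mu(a)\,\mathrm{id}$, so $S'$ restricted to $A^+$ is a multiple of $\mu$; and $\iota$-invariance of $S$ together with $\phi({}^\iota k)={}^\iota\phi(k)$ yields $\iota$-invariance of $S'$. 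By Theorem~\ref{sch} these properties force $S'$ to be the Schr\"odinger representation of $H(h)$ of type $\mu$, which in particular is $G(h)$-invariant and hence $\U_m(A)$-invariant. Finally, for $g\in\U_m(A)\subseteq\Sp_{sm}(R)$ and $k\in H(h)$,
$$W(g)S'(k)W(g)^{-1}=W(g)S(\phi(k))W(g)^{-1}=S({}^g\phi(k))=S(\phi({}^gk))=S'({}^gk),$$
which says precisely that $W|_{\U_m(A)}$ is a Weil representation of $\U_m(A)$ of type $\mu$, completing the argument.
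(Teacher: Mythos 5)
Your proof is correct and follows essentially the same route as the paper: the paper likewise defines the homomorphism $\beta(a,v)=(d(a),v)$, pulls back the Schr\"odinger representation of $H(f)$ along it to obtain one of $H(h)$ of type $\mu$, and then transfers the intertwining relation to $\U_m(A)$. The one small extra step you supply --- an explicit proof that $d$, hence $\phi$, is surjective so that $S\circ\phi$ is irreducible --- is a point the paper passes over as ``clear,'' so your version is slightly more self-contained (note that surjectivity also follows more quickly from the primitivity of $\mu$ you just proved, since $d(A)\subseteq\m$ would force the non-zero right ideal $\m^{\ell-1}A$ into $\ker\mu$).
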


\noindent{\it Proof.} Let $I$ be a right ideal of $A$ such that
$\mu(I)=1$ and let $a\in I$. Then
$$\l(d(a)r)=\l(d(ar))=\mu(ar)=1,\quad r\in R,$$
whence $d(a)=0$ by the primitivity of $\l$. Therefore $d(I)=0$, so
$I=0$ by (C1). Thus $\mu$ is primitive.

We have a group homomorphism $\beta:H(h)\to H(f)$ given by
$$
\beta(a,v)=(d(a),v),\quad a\in A,v\in V.
$$
Let $T:H(f)\to\GL(X)$ be a Schr$\mathrm{\ddot{o}}$dinger representation of $H(f)$ of type $\l$ with
associated Weil representation $W:\Sp_{sm}(R)\to\GL(X)$.
Clearly $S=T\circ\beta:H(h)\to\GL(X)$ is an irreducible representation of $H(h)$ lying over $\mu$
and we see, as in the proof of Theorem \ref{x1},
that the restriction of $W$ to $\U_m(A)$ is a Weil representation.\qed

We proceed to examine various cases where (C1) and (C2) hold. Thus
$\U_m(A)\subseteq \Sp_{sm}(R)$ in each example, provided a
non-degenerate form $h:V\times V\to A$ of the specified type is
taken. This is always possible when $\varepsilon=1$, as well as
when $\varepsilon=-1$ and $m$ is even. Such form does not exist
when $\varepsilon=-1$ and $s,m$ are odd.

\begin{exa}
\label{e1} {\rm $A=M_n(R)$, with transposition as an involution,
$\varepsilon=-1$ and $d:A\to R$ the trace map.}
\end{exa}

\begin{exa}
\label{e2} {\rm $A=H(a,b)$, the generalized quaternion algebra
over $R$ associated to $a,b\in R^{\times}$, with the involution that
sends $i,j,k$ to their opposites, and $\varepsilon=-1$. Let
$d:A\to R$ pick the coefficient of 1.}
\end{exa}

\begin{exa}
\label{e3} {\rm We introduce the notion of quadratic extension of
$R$, ramified or unramified. Let $x\in R$. For an unramified
extension we take $x$ to be a unit that is not a square. For a
ramified extension we take $x$ to be a generator of the maximal
ideal of $R$. We consider the ring $A=R[y]$, where $y^2=x$ (thus
$A=R[t]/(t^2-x)$, a quotient of the polynomial ring $R[t]$). The
elements of $A$ are of the form $r+sy$ for unique $r,s\in R$, and
multiply in an obvious manner using $y^2=x$. It is easy to verify
that $A$ is a finite, local, principal ring of characteristic $p$.
In the unramified case the maximal ideal of $A$ is $\m + \m y$, where
$\m$ is the maximal ideal of $R$, and the
residue field is $F_{q^2}$. In the ramified case the maximal ideal of
$A$ is $Ay$ and the residue field is $F_q$. We have an involution
$*$ on $A$ defined by
$$
(r+sy)^*=r-sy,\quad r,s\in R.
$$

In the unramified case we let $\varepsilon=-1$ and take $d$ to be
the trace map $A\to R$, given by $a\mapsto a+a^*$. Explicitly,
$d(r+sy)=2r$.

In the ramified case we let $\varepsilon=1$
and let $d:A\to R$ be the defined by $d(r+sy)=2s$.
}
\end{exa}

\begin{exa} {\rm $R=F_q$
and $A=F_q[t]/(t^{s})$. The ring $A$ has an involution sending $t$
to $-t$, where $t^s=0$. We let $\varepsilon=-1$ if $s$ is odd,
and $\varepsilon=1$ if $s$ is even. Let $d:V\times V\to F_q$ be
given by
\begin{equation}
\label{lgf} d(a_0+a_1t+\cdots+a_{s-1}t^{s-1})=a_{s-1},\quad
a_i\in F_q.
\end{equation}}
\end{exa}

\begin{exa} {\rm $R=F_q$ and $A=F_q[G]$, the group algebra of a finite group $G$
of order $s$ over~$F_q$. Consider the involution $*$ on $A$
sending every $x$ in $G$ to $x^{-1}$. In this case
$\varepsilon=-1$. Let $d:A\to R$ be the $R$-linear map sending
each $a\in A$ to the coefficient of $1_G$.}
\end{exa}

\begin{exa}
\label{cli} {\rm This is a family of examples, including Examples
\ref{e1}-\ref{e3} above and many more.

Let $M$ be a free $R$-module of finite rank $t>0$ and let
$q:M\times M\to R$ be a symmetric bilinear form, not necessarily
non-degenerate. Let $A=C(M,q)$ be the associated Clifford algebra.
This is an associative unital algebra, which is free of rank $2^t$
as $R$-module. If $u_1,\dots,u_t$ form a basis of $M$ then a basis
for $A$ is formed by all
\begin{equation}
\label{base} u_1^{a_1}\cdots u_t^{a_t},\quad a_i=0,1,
\end{equation}
where $u_iu_j+u_ju_i=2q(u_i,u_j)$ for all $1\leq i,j\leq t$. We
may obtain $A$ as the quotient $T(M)/I$, where $T(M)$ is the
tensor algebra of $M$ and $I$ is the ideal generated by all
$u\otimes u-q(u,u)\cdot 1$, $u\in M$. It is not difficult to see
that $M$ admits an orthogonal basis relative to $q$, and we will
let $u_1,\dots,u_t$ denote such a basis.

(a) Here we take $\varepsilon=-1$. Let $d:A\to R$ be the map that
picks the coefficient of 1 relative to the basis (\ref{base}) of
$A$. Then (C1) holds if and only if $q$ is non-degenerate, i.e.
$q(u_i,u_i)\in R^{\times}$, $1\leq i\leq t$. To define an involution, let
$A^0$ be the opposite algebra of $A$. We have linear map $M\to
A^0$ given by $u\mapsto -u$. This can be extended to an algebra
anti-homomorphism $T(M)\to A$, given by $w_1\otimes\cdots\otimes
w_r\mapsto (-1)^r w_r\cdots w_1$, for $w_i\in M$. As the
generators $u\otimes u-q(u,u)\cdot 1$, $u\in M$, of $I$ are sent
to 0, we obtain an involution $A\to A$ such that $u\mapsto -u$ for
$u\in M$. We see that condition (C2) holds.

As an illustration, when $q(u_i,u_i)=-1$ and $t=2$ we obtain
Example \ref{e2}, and when $q(u_1,u_1)=x$ and $t=1$ we obtain the
unramified case Example \ref{e3}.

(b) Here we take $\varepsilon=-1$ if $t\equiv 0,3\mod 4$ and
$\varepsilon=1$ if $t\equiv 1,2\mod 4$. Let $d:A\to R$ be the map
that picks the coefficient of $u_1\cdots u_t$ relative to the
basis (\ref{base}) of $A$. Use the same involution as in case (a).
Then conditions (C1) and (C2) hold, regardless of the nature of
$q$ (this form can even be 0, in which case $A$ is the exterior
algebra of $M$). When $t=1$ and $q(u_1,u_1)=x$ we obtain the
ramified case of Example \ref{e3}.

(c) Here $\varepsilon=-1$, $t=2$, $q(u_i,u_i)=0$ and
$q(u_1,u_2)=1/2$. Then $u_1^2=0=u_2^2$, $u_1u_2+u_2u_1=1$, where
$u_1,u_2,u_1u_2,u_2u_1$ is a basis of $A$. We may construct, as in
case (a), an involution of $A$ that interchanges $u_1$ and $u_2$.
We let $d:A\to R$ pick the sum of the coefficients of $u_1u_2$ and
$u_2u_1$. This is Example \ref{e1} when $n=2$, with
$u_1,u_2,u_1u_2,u_2u_1$ playing the roles of the basic matrices
$e_{12},e_{21}, e_{11},e_{22}$.}
\end{exa}

\section{The Weil module in the case of a ramified quadratic extension}
\label{con}

We wish to study the restriction to the unitary group of the Weil
representation of the symplectic group. In view of Theorem
\ref{resu} we may consider the Weil representation of the unitary
group directly, as defined in \S \ref{HI}, and we shall do so.

The goal of this section is to construct a concrete Weil module
for the unitary group that can be advantageous when attempting
to decompose it into irreducible constituents.

For the remainder of the paper we will work exclusively within the
framework of the ramified case of Example \ref{e3}. Thus, $R$ is
as defined after Proposition \ref{x1} and
 $A=R[y]=R\oplus Ry$, where $y^2=x$. Here the maximal ideals of $R$
and $A$ are $\m=Rx$ and $\r=Ay$, with nilpotency degrees $\ell$
and $2\ell$, respectively, and residue fields $R/\m\cong F_q\cong
A/\r$. We have an involution $*$ on $A$ defined by
$(r+sy)^*=r-sy$. All ideals of $A$ are powers of $\r$ and hence
$*$-invariant. In addition, $V$ is a free right $A$-module of
finite rank~$m\geq 1$. Since $A$ is commutative, we may view $V$
as left module in an obvious way and we shall do so. Moreover,
$h:V\times V\to A$ is a non-degenerate $*$-hermitian form, with
unitary group $U$ and associated non-degenerate alternating
$R$-bilinear form $f:V\times V\to R$, given by $f(u,v)=d(h(u,v))$,
where $d(r+sy)=2s$. Note that $d(R)=0$.

We also fix from now on a primitive linear character
$\l:R^+\to\C^{\times}$, the primitive linear character $\mu:A^+\to\C^{\times}$,
given by $\mu(a)=\l(d(a))$, and the ideal $\i=\r^\ell$ of $A$.
Note that the annihilator, say $\i^\perp$, of $\i$ in $A$ is $\i$
itself.

\begin{lem}
\label{qz} There is an orthogonal basis $v_1,\dots,v_m$ of $V$
satisfying ~$h(v_i,v_i)\in R^{\times}$.
\end{lem}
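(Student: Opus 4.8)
The plan is to prove Lemma~\ref{qz} by induction on $m$, mimicking the Gram--Schmidt argument used in the proof of Lemma~\ref{easref2} (the symplectic basis lemma), but paying attention to the extra subtlety that the values $h(v,v)$ lie in $R$, not in $A$. First I note that $h$ is hermitian, so $h(v,v)=h(v,v)^*$ for all $v$, which forces $h(v,v)\in R$ (the $*$-fixed subring of $A$). Thus the issue is purely whether we can arrange $h(v_i,v_i)$ to be a \emph{unit} of $R$ rather than an element of $\m$.

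The key step is the base of the induction: I claim that there exists $v\in V$ with $h(v,v)\in R^{\times}$. To see this, pick any $A$-basis $w_1,\dots,w_m$ of $V$. Since $h$ is non-degenerate, the Gram matrix $\bigl(h(w_i,w_j)\bigr)$ is invertible over $A$; in particular its entries cannot all lie in the maximal ideal $\r$, otherwise the matrix would be zero modulo $\r$ and hence singular. If some diagonal entry $h(w_i,w_i)$ is a unit of $A$, then since $h(w_i,w_i)\in R$ and the units of $R$ are exactly the elements of $R$ that are units in $A$ (because $\m=\r^2\cap R$, equivalently $R\cap \r = \m$), we are done with $v=w_i$. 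Otherwise all diagonal entries lie in $\m$, but some off-diagonal entry $h(w_i,w_j)=a$ with $i\neq j$ is a unit of $A$; then consider $v=w_i+w_j c$ for a suitable $c\in A$. Expanding, $h(v,v)=h(w_i,w_i)+h(w_j,w_j)c^*c + a^* c + a c^*$ — wait, more carefully, $h(v,v)=h(w_i,w_i)+c^* h(w_j,w_i)+h(w_i,w_j)c + c^* h(w_j,w_j) c$; the cross term $c^* a^* + a c$ equals $2\,\mathrm{Re}$-type expression and its value can be made a unit of $R$ by choosing $c$ appropriately (e.g.\ $c=1$ gives $a+a^*$, and if that lies in $\m$ one instead uses $c=1+y$ or rescales, exploiting that $a$ being a unit of $A$ means $a+a^*$ and $a^* y - ay$ cannot both be non-units of $R$). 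This case analysis, bounding which of $a+a^*$ and $(a-a^*)/y$-type quantities is a unit, is the part that requires care.

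Having found $v_1$ with $h(v_1,v_1)\in R^{\times}$, the inductive step is routine: the submodule $V_1 = v_1 A$ is a non-degenerate free $A$-module of rank one (its Gram ``matrix'' is the unit $h(v_1,v_1)$), so $V = V_1 \oplus V_1^{\perp}$ with $V_1^{\perp}$ free of rank $m-1$ and the restriction of $h$ to $V_1^{\perp}$ again non-degenerate. By induction $V_1^{\perp}$ has an orthogonal basis $v_2,\dots,v_m$ with $h(v_i,v_i)\in R^{\times}$, and appending $v_1$ completes the proof.

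I expect the main obstacle to be the base case of the induction, specifically ruling out the pathology where the Gram matrix is invertible over $A$ yet no element $v\in V$ has $h(v,v)$ a unit of $R$. The resolution hinges on the specific structure of the ramified extension $A=R[y]$, $y^2=x$ with $x$ a generator of $\m$: because $2\in R^{\times}$ and because a unit of $A$ has the form $r+sy$ with $r\in R^{\times}$, the trace $a+a^* = 2r$ of a unit $a=r+sy$ is automatically a unit of $R$ — so in fact taking $v=w_i+w_j$ with $a=h(w_i,w_j)$ a unit of $A$ already yields $h(v,v)\equiv a+a^* = 2\,\mathrm{(unit\ of\ }R) \pmod{\m}$, a unit of $R$. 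This observation collapses the case analysis entirely and is the crux of the argument.
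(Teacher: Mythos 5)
The paper does not actually prove Lemma~\ref{qz}; it simply cites Lemma~2.2 of Cruickshank--Herman--Quinlan--Szechtman (reference \cite{15}). So there is no in-paper argument to compare against, and what you have supplied is a self-contained replacement. Your argument is correct, and the crux observation is exactly the right one for the ramified case: since $A$ is local with maximal ideal $\r=Ay=\m+Ry$, an element $a=r+sy$ is a unit of $A$ iff $r\in R^{\times}$, so $a+a^{*}=2r$ is automatically a unit of $R$ whenever $a\in A^{\times}$. Combined with the facts that $h(v,v)$ always lies in the fixed ring $R$ (since $h(v,v)=h(v,v)^{*}$ and $\{1,y\}$ is an $R$-basis of $A$, so the $*$-fixed elements are exactly $R$) and that $R\cap\r=\m$ (so ``unit of $A$ lying in $R$'' is the same as ``unit of $R$''), this cleanly handles the base case. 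The two cases you distinguish are exhaustive because the Gram matrix of $h$ in any basis is invertible over the local ring $A$, hence cannot reduce to zero mod $\r$, hence has at least one unit entry. The inductive step is the standard splitting $V=v_1A\oplus(v_1A)^{\perp}$, which is valid because $h(v_1,v_1)\in R^{\times}\subseteq A^{\times}$ makes the projection $v\mapsto v-v_1\,h(v_1,v_1)^{-1}h(v_1,v)$ well defined, and over a local ring a direct summand of a free module of finite rank is free of the expected rank, with the restricted form again non-degenerate (block-diagonal Gram matrix). The middle portion of your write-up, where you floated the possibility of needing $c=1+y$ or rescaling, is superfluous once you note the observation in your final paragraph; the finished proof should simply take $v=w_i+w_j$ in the second case.
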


\noindent{\it Proof.} This is \cite{15}, Lemma 2.2.  \qed

Consider the $U$-invariant submodule $V_0=\i V$ of $V$ and let
$$
V_0^\perp=\{v\in V\,|\,h(v,V_0)=0\}.
$$
It follows from Lemma \ref{qz} that $(\i V)^\perp=\i^\perp V$,
i.e., $V_0^\perp=V_0$.

We proceed to construct a specific Weil module for the unitary
group $U$, as guaranteed in \S \ref{HI}.

Let $H=H(h)$ be the Heisenberg group associated to $h$ and
consider the subgroup $(A,V_0)$ of $H$. We extend $\mu$ to a
linear character $\rho:(A,V_0)\to\C^{\times}$ by $\rho(a,u)=\mu(a)$. This
works because $h(V_0,V_0)=0$.

We claim that the stabilizer of $\rho$ in
$H$ is $(A,V_0)$. Indeed, for $(b,v)$ in $H$ we have
$$
\rho^{(b,v)}(a,u)=\rho(a,u)\mu(h(v,u)-h(u,v)).
$$
Suppose $(b,v)\in H$ stabilizes $\rho$. Then
$\mu(h(v,u)-h(u,v))=1$ for all $u\in V_0$. From $d(R)=0$ it
follows that $\mu(h(v,u)+h(u,v))=1$ for all $u\in V$. Therefore
$\mu(h(u,v))=1$ for all $u\in V_0$. The primitivity of $\mu$ now
implies $v\in V_0^\perp=V_0$, so $(b,v)\in (A,V_0)$, as claimed.

Let $Z=\C \zeta$ be a one-dimensional complex vector space. We have a representation of
$(A,V_0)$ on $Z$ given by
$$(a,u)\zeta=\rho(a,u)\zeta=\mu(a)\zeta,\quad a\in A,u\in V_0.$$

By Clifford theory the induced module $\mathrm{ind}_{(A,V_0)}^H Z$
is is irreducible and lies over $\mu$. Using the $U$-invariance of
$V_0$ we may obtain the same module in a more convenient form, as
follows. Extend the above action of $(A,V_0)$ on $Z$ to
$(A,V_0)\rtimes U$ by means of $(a,u)g \zeta=\mu(a)\zeta$, and consider
the induced module $X=\mathrm{ind}_{(A,V_0)\rtimes U}^{H\rtimes U}
Z$.  Let $T$ be a transversal for $V_0$ in $V$. Then $(0,v)$,
$v\in T$, is a transversal for both $(A,V_0)$ in $H$ and
$(A,V_0)\rtimes U$ in $H\rtimes U$. It follows that the
restriction of $X$ to $H$ is isomorphic to
$\mathrm{ind}_{(A,V_0)}^H Z$. Therefore, by construction, $res_H
X$ is Schr$\mathrm{\ddot{o}}$dinger of type $\mu$ and $res_U X$ is
Weil of type $\mu$. Now
$$
X=\C(H\rtimes U)\otimes_{\C((A,V_0)\rtimes U)} Z
$$
has $\C$-basis $e_v=(0,v)\otimes \zeta$, $v\in T$. The action of
$U$ on this basis is as follows:
\begin{equation}
\label{actu} ge_v=\mu(h(gv,v'))e_{v'},\quad g\in U,\; v,v'\in T,\;
gv\equiv v'\mod V_0.
\end{equation}
Indeed, we have
$$
\begin{aligned}
ge_v &=g(0,v)\otimes\zeta= g(0,v)g^{-1}g\otimes
\zeta=(0,gv)\otimes
g\zeta=(0,v'+(gv-v')) \otimes \zeta\\
&=(0,v')(0,gv-v')(-h(v',gv-v'),0)\otimes\zeta=\mu(-h(v',gv-v'))e_{v'}\\
&=\lambda(-f(v',gv-v'))e_{v'}=\lambda(f(gv-v',v'))e_{v'}=\mu(h(gv,v'))e_{v'}.
\end{aligned}
$$

For $v\in V$ let
$$
C(v)=\{g\in U\,|\, gv\equiv v\mod V_0\}.
$$
Thus, for $v\in T$, $C(v)$ is the stabilizer of the
subspace $\C e_v$ under the action of $U$.

It follows from (\ref{actu}) that for any if $v\in V$ then the map
$$\beta_v:C(v)\to\C^{\times}, \text{ given by }g\mapsto \mu(h(gv,v)), $$
is a group homomorphism. This can be verified independently of
(\ref{actu}).

Consider the subgroup $U(\i)$ of $U$ defined as follows:
$$
U(\i)=\underset{v\in V}\cap C(v)=\{g\in U\,|\, gv\equiv
v\mod V_0\text{ for all }v\in V\}.
$$
It is clear that $U(\i)$ is a normal subgroup of $U$. Using
$\i^2=0$ we see that $U(\i)$ is abelian.

Let $N=\{z\in A^{\times}\,|\, zz^*=1\}$ be the norm-1 subgroup of $A$. We identify $z\in N$ with the element of $Z(U)$ such that $v\mapsto zv$ for all $v \in V$.

For $v\in V$ we consider the subgroup $B(v)$ of $U$ given by
$$
B(v)=C(v)N.
$$
It is clear that $C(v)\subseteq B(v)$ with $C(v)$ normal in $B(v)$ and $B(v)/C(v)$ abelian.

For $v\in V$ let $\alpha_v:U(\i)\to\C^{\times}$ be the function defined
by
$$
\alpha_v(g)=\mu(h(gv,v)),\quad g\in U(\i).
$$
It follows from (\ref{actu}) that $\alpha_v$ is a group homomorphism, namely the restriction of $\beta_v$ to $U(\i)$.

\section{The stabilizer in $U$ of the linear character $\alpha_v$ of $U(\i)$}
\label{secsta}

A vector $v\in V$ is said to be primitive if $v\notin\r V$. This
is equivalent to say that $v$ belongs to a basis of $V$.

The goal of this section is to prove $\mathrm{Stab}_U
(\alpha_v)=B(v)$ when $v\in V$ is primitive. This will require a
few subsidiary results.

\begin{lem}
\label{genex} Let $a\in\i$ and $z_1,z_2\in V$. Consider the map $\rho_{a,z_1,z_2}:V\to V$ given by
$$
\rho_{a,z_1,z_2}(v)=v+ah(z_1,v)z_2-a^*h(z_2,v)z_1,\quad v\in V.
$$
Then $\rho_{a,z_1,z_2}\in U(\i)$.
\end{lem}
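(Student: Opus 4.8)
The plan is to verify directly the two defining conditions for membership in $U(\i)$: first, that $\rho=\rho_{a,z_1,z_2}$ is an isometry of $h$ (hence lies in $U$), and second, that $\rho(v)\equiv v\bmod V_0$ for all $v\in V$, i.e. $(\rho-1)V\subseteq \i V=V_0$. The second condition is immediate: for every $v$, $\rho(v)-v=ah(z_1,v)z_2-a^*h(z_2,v)z_1$ lies in $\i V$ since $a,a^*\in\i$ (recall $\i=\r^\ell$ is $*$-invariant). This also shows $\rho$ maps $V$ to $V$ and, being of the form $1+(\text{nilpotent})$ — note $(\rho-1)^2$ involves a factor $h(z_i,\i V)\subseteq\i h(z_i,V)\subseteq\i$ composed with another factor in $\i$, and $\i^2=0$ — is bijective with inverse $1-(\rho-1)$; alternatively bijectivity follows once we know $\rho$ is an isometry of the nondegenerate form $h$.

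The substance is checking $h(\rho u,\rho v)=h(u,v)$. First I would expand
$$
h(\rho u,\rho v)=h\bigl(u+ah(z_1,u)z_2-a^*h(z_2,u)z_1,\ v+ah(z_1,v)z_2-a^*h(z_2,v)z_1\bigr)
$$
using sesquilinearity of $h$ (conjugate-linear in the first slot, linear in the second, with respect to $*$). This produces $h(u,v)$ plus six "first-order" cross terms plus "second-order" terms. Every second-order term carries a coefficient that is a product of two elements drawn from $\i$ (the scalars $a$ and $a^*$, possibly after moving them out of $h$ via $*$-(conjugate-)linearity), hence vanishes because $\i^2=0$. So it remains to show the six first-order terms cancel in pairs. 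Writing $p=h(z_1,u),\,q=h(z_1,v),\,r=h(z_2,u),\,t=h(z_2,v)$, the hermitian symmetry $h(z_i,w)=h(w,z_i)^*$ (here $\varepsilon=1$) converts each cross term; for instance the term from pairing $u$ in the first slot with $ah(z_1,v)z_2$ in the second is $h(u,z_2)\,aq$, and the term from pairing $-a^*h(z_2,u)z_1$ with $v$ is $(-a^*r)^*\,h(z_1,v)=-ar^*q$; using $h(u,z_2)=h(z_2,u)^*=r^*$ these two are $ar^*q$ and $-ar^*q$, which cancel. I would tabulate all three such cancelling pairs analogously.

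The only mildly delicate point — the expected main obstacle — is bookkeeping the involution and the side on which scalars act: since $V$ is a right $A$-module (viewed also as a left module as the paper notes, $A$ being commutative), one must be careful that $h(z_1,u)$ and the like, and their images under $*$, are multiplied on the correct side when pulled out of $h$, so that the conjugate-linearity in the first argument is applied consistently. Once the convention is fixed, every cross term is a single scalar multiple of some $h(z_i,z_j)$-free expression $h(u,z_i),h(z_i,v)$, and the pairwise cancellations are forced by $h(w,z_i)=h(z_i,w)^*$ together with commutativity of $A$. With the first-order terms cancelling and the second-order terms killed by $\i^2=0$, we get $h(\rho u,\rho v)=h(u,v)$, so $\rho\in U$, and combined with $(\rho-1)V\subseteq V_0$ we conclude $\rho\in U(\i)$. \qed
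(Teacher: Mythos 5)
Your proof is correct and supplies exactly the ``direct calculation'' that the paper's proof leaves unstated: expand $h(\rho u,\rho v)$, kill the quadratic terms via $\i^2=0$, and cancel the linear cross terms in pairs using $h(w,z)=h(z,w)^*$ together with commutativity of $A$ (a harmless miscount: there are four such cross terms forming two cancelling pairs, not six forming three). The final step, deducing $\rho\in U(\i)$ from $a,a^*\in\i$, matches the paper's.
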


\noindent{\it Proof.} A direct calculation shows that $\rho_{a,z_1,z_2}\in U$. Since $a\in\i$, it follows that $\rho_{a,z_1,z_2}\in U(\i)$.\qed

\begin{lem}
\label{ext} Let $u_1,\dots,u_s$ be linearly independent vectors of $V$. Then $s\leq m$. Moreover, if $s=m$ then $u_1,\dots,u_s$ is a basis of $V$,
and if $s<m$ then $u_1,\dots,u_s$ can be extended to a basis of~$V$.
\end{lem}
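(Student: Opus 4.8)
The plan is to push everything down to the residue field. Set $\bar V=V/\r V$; since $V$ is free of rank $m$ over $A$ and $A/\r\cong F_q$, this is an $F_q$-vector space of dimension $m$, and I write $\bar v$ for the image of $v\in V$. The heart of the matter is the claim that $A$-linear independence of $u_1,\dots,u_s$ implies $F_q$-linear independence of $\bar u_1,\dots,\bar u_s$; granting this, $s\le\dim_{F_q}\bar V=m$ is immediate.

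I would prove that claim by contraposition. Suppose $\sum_i\bar c_i\bar u_i=0$ in $\bar V$ with not all $\bar c_i$ zero, and lift the $\bar c_i$ to elements $c_i\in A$, at least one of which, say $c_j$, is a unit, because $A$ is local with maximal ideal $\r$. Then $w:=\sum_i c_iu_i\in\r V=yV$, so $y^{2\ell-1}w\in y^{2\ell}V=0$, using that $\r=Ay$ with $y^2=x$ and $\m^\ell=0$, whence $\r^{2\ell}=0$. This yields the relation $\sum_i(y^{2\ell-1}c_i)u_i=0$, and its $j$-th coefficient $y^{2\ell-1}c_j$ is nonzero: $y^{2\ell-1}\ne0$ since the nilpotency degree of $\r$ is exactly $2\ell$, and multiplication by the unit $c_j$ preserves this. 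So $u_1,\dots,u_s$ are $A$-linearly dependent, as wanted.

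For the remaining two assertions I would invoke Nakayama together with a counting argument. If $s=m$, then $\bar u_1,\dots,\bar u_m$ is an $F_q$-basis of $\bar V$, so by Nakayama's lemma $u_1,\dots,u_m$ generate $V$; the induced surjection $A^m\twoheadrightarrow V$ is then a bijection since $|A^m|=|V|$ (both free of rank $m$ over the finite ring $A$), hence an isomorphism, and $u_1,\dots,u_m$ is a basis. If $s<m$, I first extend $\bar u_1,\dots,\bar u_s$ to an $F_q$-basis $\bar u_1,\dots,\bar u_s,\bar w_{s+1},\dots,\bar w_m$ of $\bar V$ by choosing arbitrary preimages $w_{s+1},\dots,w_m\in V$ of the new basis vectors; the same Nakayama-plus-counting argument shows $u_1,\dots,u_s,w_{s+1},\dots,w_m$ is a basis of $V$, so $u_1,\dots,u_s$ extends to a basis.

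I do not expect a genuine obstacle: the only step that is not entirely formal is the first one, that independence survives reduction modulo $\r$. This is false over arbitrary rings and really does use that $A$ is a finite chain ring, so that a nontrivial $F_q$-relation can be multiplied up by the top nonzero power $y^{2\ell-1}$ of the uniformizer into a nontrivial $A$-relation. Everything after that is the standard Nakayama/finite-cardinality package.
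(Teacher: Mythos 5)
Your proof is correct, and it takes a genuinely different (though related) route from the paper's. The paper runs a Steinitz-type exchange argument directly in $V$: starting from a basis $v_1,\dots,v_m$, it replaces basis vectors one at a time with $u_1, u_2, \dots$, arguing at each stage that when $u_i$ is expressed in the current basis, some coefficient on a not-yet-replaced basis vector must be a unit — and the reason is exactly the observation you isolate, that a relation $\sum c_j u_j \in \r V$ would, after multiplying by $y^{2\ell-1}$, produce a nontrivial $A$-linear relation among $u_1,\dots,u_i$. You instead package this single observation as the clean statement that $A$-linear independence descends to $F_q$-linear independence in $\bar V = V/\r V$, then dispatch the remaining claims uniformly via Nakayama's lemma plus a cardinality count to upgrade generation to freeness. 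The trade-off is that the paper's exchange argument is self-contained and entirely elementary, delivering the extended basis explicitly, while your version factors the argument into a reusable descent lemma and the standard local-ring machinery, which is more modular and arguably illuminates why $A$ being a finite chain ring is the essential hypothesis (as you note, the reduction step is the only place that fails over a general ring). Both proofs are correct and of comparable length.
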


\noindent{\it Proof.} Let $v_1,\dots,v_m$ be a basis of $V$. Write
$u_1$ in terms of this basis. Since $u_1$ is primitive, at least
one coefficient must be a unit. Thus, we can replace some basis vector, say
$v_1$, by $u_1$ and still have a basis. Repeat the process with
$u_2$. Since $u_1,u_2$ are linearly independent and $\r$ is nilpotent, some coefficient
other than the one from $u_1$ must be a unit, and we get a basis
starting with $u_1,u_2$. Repeating the above process we obtain the desired result.\qed

\begin{lem}
\label{dual} Let $v_1,\dots,v_s$ be linearly independent vectors of $V$ and let $a_1,\dots,a_s\in A$. Then there is a vector $v\in V$ such
that $h(v,v_i)=a_i$ for all $1\leq i\leq s$.
\end{lem}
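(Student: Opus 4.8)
The plan is to reduce the statement to the non-degeneracy of $h$ together with the extension result of Lemma \ref{ext}. First I would extend $v_1,\dots,v_s$ to a full basis $v_1,\dots,v_m$ of $V$ using Lemma \ref{ext}; this is legitimate precisely because linearly independent vectors in a free module over the local ring $A$ form part of a basis. Once I have such a basis, I would invoke non-degeneracy of $h$: the map $V\to V^*$ sending $w\mapsto h(w,-)$ is an isomorphism of right $A$-modules. Let $\varphi\in V^*$ be the $A$-linear functional determined on the basis by $\varphi(v_i)=a_i$ for $1\leq i\leq s$ and, say, $\varphi(v_j)=0$ for $s<j\leq m$ (any values will do for these). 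By surjectivity of the map $w\mapsto h(w,-)$ there is $v\in V$ with $h(v,-)=\varphi$, and in particular $h(v,v_i)=a_i$ for all $1\leq i\leq s$, as required.

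The one point that needs a small amount of care is the variance: $h$ is $A$-linear in the \emph{second} variable, and $V^*$ is given the right $A$-module structure $(\alpha a)(v)=a^*\alpha(v)$ as in \S\ref{iur}; so I should make sure the functional $\varphi$ I build is $A$-linear in the ordinary sense (i.e. $\varphi(va)=a\varphi(v)$ is \emph{not} what is wanted — rather $\varphi$ must be a genuine element of $V^*$ with its stated module structure). Concretely, since $V$ is free with basis $v_1,\dots,v_m$, the assignment $v_i\mapsto a_i$ (for $i\le s$) and $v_j\mapsto 0$ (for $j>s$) extends uniquely to an element of $\mathrm{Hom}_A(V,A)=V^*$, and that is all that is needed; the right-module structure on $V^*$ plays no role in the mere existence of $\varphi$, only in the statement that $w\mapsto h(w,-)$ is an isomorphism, which is hypothesis.

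I do not anticipate any real obstacle here; the only thing to get right is the bookkeeping of which variable of $h$ is $A$-linear and that Lemma \ref{ext} genuinely applies (it requires $v_1,\dots,v_s$ linearly independent, which is the hypothesis). Thus the proof is short: extend to a basis, define the functional on the basis with the prescribed values on $v_1,\dots,v_s$, and pull it back through the isomorphism $V\xrightarrow{\sim} V^*$ furnished by non-degeneracy of $h$. \qed
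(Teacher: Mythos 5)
Your argument is correct and is essentially identical to the paper's proof: extend $v_1,\dots,v_s$ to a basis via Lemma \ref{ext}, define the functional $\varphi\in V^*$ by prescribing its values on that basis, and pull $\varphi$ back through the isomorphism $V\to V^*$ given by non-degeneracy of $h$. Your extra remarks on variance and the right $A$-module structure on $V^*$ are harmless (and indeed moot here since $A$ is commutative) and do not change the substance.
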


\noindent{\it Proof.} By Lemma \ref{ext} we may extend the given
list to a basis $v_1,\dots,v_s,\dots,v_m$ of $V$. Consider the
linear functional $\phi:V\to A$ defined by $\phi(v_i)=a_i$ if
$i\leq s$ and $\phi(v_i)=0$ if $s<i\leq m$.

As the map $V\to V^*$ associated to $h$ is an isomorphism there is
a $v\in V$ such that $h(v,-)=\phi$.\qed

\begin{prop}
\label{hom6}
 Suppose $v,w\in V$ are primitive and satisfy
$\alpha_v=\alpha_w$. Then there exist $z\in N$ and a vector
$u_0\in\i V$ such that $w=zv+u_0$.
\end{prop}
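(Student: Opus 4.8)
The plan is to evaluate $\alpha_v$ on the explicit elements of $U(\i)$ produced by Lemma~\ref{genex} and to read off enough congruences to pin down $w$ modulo $\i V$. Fix $a\in\i$ and $z_1,z_2\in V$ and put $g=\rho_{a,z_1,z_2}\in U(\i)$. Then $gv-v=ah(z_1,v)z_2-a^*h(z_2,v)z_1\in\i V$, and since $h(v,v)=h(v,v)^*\in R$ with $d(R)=0$ we get $\alpha_v(g)=\mu(h(gv,v))=\mu(h(gv-v,v))$. Expanding $h(gv-v,v)$ and using that $h$ is conjugate-linear in the first variable, one finds $h(gv-v,v)=s^*-s$ where $s=a\,h(z_2,v)^*h(z_1,v)$; since $d(s^*)=-d(s)$ this gives
$$
\alpha_v(\rho_{a,z_1,z_2})=\mu\big(-2\,a\,h(z_2,v)^*h(z_1,v)\big),\qquad a\in\i,\ z_1,z_2\in V,
$$
and likewise for $w$. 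Hence $\alpha_v=\alpha_w$ forces $\mu(a\,\delta)=1$ for every $a\in\i$, where $\delta=\delta(z_1,z_2)=h(z_2,v)^*h(z_1,v)-h(z_2,w)^*h(z_1,w)$ (using $-2\in R^{\times}$). If $\delta\notin\i$ then $A\delta=\r^{\,j}$ with $j<\ell$, so $\i\delta=\r^{\,\ell+j}$ is a non-zero ideal contained in $\ker\mu$, contradicting the primitivity of $\mu$ (Theorem~\ref{resu}). Therefore
$$
h(z_2,v)^*h(z_1,v)\equiv h(z_2,w)^*h(z_1,w)\pmod{\i}\qquad\text{for all }z_1,z_2\in V.
$$

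Now I specialise. Since $v$ is primitive it belongs to a basis of $V$, so Lemma~\ref{dual} makes the map $z\mapsto h(z,v)$ surjective onto $A$; choose $z_0\in V$ with $h(z_0,v)=1$ and set $c=h(z_0,w)$. Taking $z_1=z_2=z_0$ in the displayed congruence gives $1-c^*c\in\i$, so $c^*c\in 1+\i\subseteq A^{\times}$ and hence $c\in A^{\times}$. Taking $z_2=z_0$ and $z_1=z$ arbitrary gives $h(z,v)-c^*h(z,w)\in\i$, that is,
$$
h(z,w)\equiv b\,h(z,v)\pmod{\i}\quad\text{for all }z\in V,\qquad b:=(c^*)^{-1}\in A^{\times},
$$
and moreover $b^*b=(c^*c)^{-1}\equiv 1\pmod{\i}$.

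Finally I replace $b$ by a norm-one element in the same coset modulo $\i$. Put $\gamma=1-b^*b\in\i$; since $b^*b$ is $*$-fixed we have $\gamma\in\i\cap R$, and as $\i^2=0$ the element $z:=b(1+\gamma/2)\in A^{\times}$ satisfies $zz^*=bb^*(1+\gamma/2)^2=(b^*b)(1+\gamma)=1-\gamma^2=1$, so $z\in N$, while $z\equiv b\pmod{\i}$. Then for every $t\in V$,
$$
h(t,w-zv)=h(t,w)-z\,h(t,v)\equiv h(t,w)-b\,h(t,v)\equiv 0\pmod{\i}.
$$
Choosing, via Lemma~\ref{qz}, an orthogonal basis $v_1,\dots,v_m$ of $V$ with $h(v_i,v_i)\in R^{\times}$ and writing $w-zv=\sum_i v_i a_i$, the relations $h(v_j,w-zv)=h(v_j,v_j)a_j\in\i$ force $a_j\in\i$, so $u_0:=w-zv\in\i V$; thus $w=zv+u_0$ with $z\in N$ and $u_0\in\i V$, as required. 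The step needing the most care is the extraction of the congruences $\delta(z_1,z_2)\in\i$—where the fact that every ideal of $A$ is a power of $\r$ and the primitivity of $\mu$ do the real work—together with the observation that the resulting unit $b$ can be adjusted, within its class modulo $\i$, to land in the norm-one subgroup $N$; everything after that is bookkeeping with the non-degeneracy of $h$.
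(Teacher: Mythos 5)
Your proof is correct, and it takes a genuinely different route from the paper's. Both arguments begin in the same way, evaluating $\alpha_v=\alpha_w$ on the elements $\rho_{a,z_1,z_2}$ of Lemma~\ref{genex} and extracting, via the primitivity of $\mu$ together with $\i^\perp=\i$, the congruence
$h(z_2,v)^*h(z_1,v)\equiv h(z_2,w)^*h(z_1,w)\pmod{\i}$ for all $z_1,z_2\in V$ (the paper states the $*$-conjugate version $h(v,z_1)h(z_2,v)\equiv h(w,z_1)h(z_2,w)$, which is equivalent since $\i$ is $*$-stable). After this common starting point the paper runs a multi-stage case analysis: it first shows $v,w$ cannot be linearly independent, then writes $w=tv+y^iu$ with $i>0$ chosen maximal, shows $u$ is primitive and independent of $v$ when $i<2\ell$, and then handles three subcases depending on whether the order of $tt^*-1$ is less than, equal to, or greater than $i$, in order to land in the ideal $\i$. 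You bypass all of that: fixing $z_0$ with $h(z_0,v)=1$ (using Lemma~\ref{dual}) and specializing $z_2=z_0$, you directly read off $h(z,w)\equiv b\,h(z,v)\pmod{\i}$ for $b=(h(z_0,w)^*)^{-1}\in A^\times$ with $b^*b\equiv1\pmod{\i}$; the orthogonal basis of Lemma~\ref{qz} then immediately gives $w-bv\in\i V$, and the adjustment of $b$ to $z\in N$ within its coset modulo $\i$ is the same computation the paper uses at the end. Your approach is cleaner: the single evaluation at $z_2=z_0$ replaces the entire linear-independence and maximal-$i$ apparatus, and the use of the orthogonal basis makes the passage from ``$h(\cdot,w-bv)\in\i$'' to ``$w-bv\in\i V$'' immediate. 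One small remark: your step from $z\equiv b\pmod\i$ to $z\,h(t,v)\equiv b\,h(t,v)\pmod\i$ silently uses that $\i$ is an ideal (so $(z-b)h(t,v)\in\i$), which is fine but worth making explicit. The citation for primitivity of $\mu$ is really the discussion in \S\ref{con} (the paper declares $\mu$ primitive there after the general Theorem~\ref{resu}); the attribution is harmless either way.
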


\noindent{\it Proof.} By assumption
$$
\mu(h(gv,v))=\mu(h(gw,w)),\quad g\in U(\i).
$$

For ease of notation we will write $(u,v)$ instead of $h(u,v)$ for
the remainder of the proof.

For $a\in\i$ and $z_1,z_2\in V$ consider the element
$\rho_{a,z_1,z_2}$ of $U(\i)$ defined in Lemma \ref{genex}. Since
$\mu(R)=\l(d(R))=1$, we have
$$
\mu(v,v)=1.
$$
It follows that
$$
\mu(\rho_{a,z_1,z_2}(v),v)=\mu(v+a(z_1,v)z_2-a^*(z_2,v)z_1,v)=\mu(b-b^*),
$$
where
$$
b=a^*(v,z_1)(z_2,v).
$$
Likewise,
$$
\mu(\rho_{a,z_1,z_2}(w),w)=\mu(c-c^*),
$$
where
$$
c=a^*(w,z_1)(z_2,w).
$$
Again, since $\mu(R)=1$, we have
$$
 \mu(b+b^*)=1=\mu(c+c^*).
$$
It follows that
$$
\mu(2b)=\mu(2c),
$$
so
$$
\mu(a^*(v,z_1)(z_2,v))=\mu(a^*(w,z_1)(z_2,w)),\quad a\in\i,z_1,z_2\in V.
$$
The primitivity of $\mu$ and the fact that $\i$ is its own
annihilator imply
\begin{equation}
\label{run1} (v,z_1)(z_2,v)\equiv (w,z_1)(z_2,w)\mod\i,\quad z_1,z_2\in V.
\end{equation}
Suppose, if possible, that $v,w$ are linearly independent. Then
by Lemma \ref{dual} there is $z_1\in V$ such that
$(z_1,v)=1=(z_1,w)$. Substituting these values in (\ref{run1})
yields $$ (z_2,v)\equiv (z_2,w)\mod\i,\quad z_2\in V,$$ which means
$$ (z_2,v-w)\in\i,\quad z_2\in V.$$
We infer from Lemma \ref{qz} that $v-w\in\i V$, so $y^\ell
v-y^\ell w=0$, against the linear independence of $v,w$.

We deduce the existence of $a,b\in A$, not both zero, such that
$av=bw$. Since $v,w$ are primitive, we see that neither $a$ nor $b$ is
zero. Again, by the primitivity of $v,w$, it follows that $a=y^js$
and $b=y^jr$ for some units $r,s\in A$ and $0\leq j<2\ell$. Thus
$$
y^j(rw-sv)=0.
$$
Multiplying this by $r^{-1}$ we infer the existence of a unit $t\in A$ such that
$$
y^j(w-tv)=0
$$
The vectors annihilated by $y^j$ are those in $y^{2\ell-j}V$.
Letting $i=2\ell-j$, we see that $0<i\leq 2\ell$ and there is
$u\in V$ such that
\begin{equation}
\label{run2} w=tv+y^i u.
\end{equation}
Of all such expressions for $w$ in terms of $v$, with $t\in A^{\times}$,
$0<i\leq 2\ell$ and $u\in V$ we choose one with $i$ as large as
possible.

Suppose first $i=2\ell$. Then $w=tv$. Substituting this in (\ref{run1}) gives
$$
(tt^*-1)(v,z_1)(z_2,v)\in\i,\quad z_1,z_2\in V.
$$
By Lemma \ref{dual} there is $z_1=z_2\in V$ such that $(z_1,v)=1=(z_2,v)$, whence $tt^*\equiv 1\mod \i$.

Suppose next $i<2\ell$. Then, by the choice of $i$, $u$ must be
primitive. We claim that $v,u$ are linearly independent.
Otherwise, arguing as above we may find $a\in A^{\times}$, $0<k\leq
2\ell$ and $z\in V$ such that
$$
u=av+y^k z.
$$
But then
$$
w=tv+y^i(av+ y^k z)=(t+y^i a)v+y^{i+k}z,
$$
contradicting the choice of $i$. Thus $v,u$ are linearly
independent.

Substituting (\ref{run2}) in (\ref{run1}) yields
\begin{equation}
\label{run3} (tt^*-1)(v,z_1)(z_2,v)+b+c\in\i,
\end{equation}
where
$$
b=y^it^*(v,z_1)(z_2,u)+(-1)^i y^i t (u,z_1)(z_2,v)
$$
and
$$
c=(-1)^i y^{2i}(u,z_1)(z_2,u).
$$
Now there exists a unique $k$ such that $0\leq k\leq 2\ell$ and
$tt^*-1=y^k e$ for some unit $e\in A$. Three cases arise,
depending on how $k$ compares to $i$.

\noindent$\bullet$ $k<i$. By Lemma \ref{dual} we may choose
$z_1=z_2\in V$ such that $(z_1,v)=1=(z_2,v)$. Then (\ref{run3})
implies $\ell\leq k<i$, as required.

\noindent$\bullet$ $k>i$. By Lemma \ref{dual} we may choose
$z_1,z_2\in V$ such that
$$
t^*(v,z_1)=1, t(u,z_1)=(-1)^i, (z_2,u)=1, (z_2,v)=1.
$$
Then (\ref{run3}) implies $k>i\geq \ell$, as required.

\noindent$\bullet$ $k=i$. By Lemma \ref{dual} we may choose
$z_1,z_2\in V$ such that
$$
t^* (v,z_1)=1, t (u,z_1)=(-1)^{i+1}, (z_2,u)=1, (z_2,v)=1.
$$
Then (\ref{run3}) implies $k=i\geq \ell$.

We have shown that $w=tv+u_1$, where $tt^*\equiv 1\mod \i$ and
$u_1=y^iu\in \i V$. Now $tt^*=1+s$, where $s\in\i\cap R$. Since
$|R|$ has odd order the squaring map of the group $1+(\i\cap R)$
is surjective. Thus, there is $r\in \i\cap R$ such that
$$(1+r)(1+r)^*=(1+r)^2=1+s=tt^*.$$ Thus $t(1+r)^{-1}$ has norm 1,
so $t=(1+r)z$, where $z\in N$. Therefore,
$$w=(1+r)zv+u_1=zv+rzv+u_1=zv+u_0,
$$
where $u_0=rzv+u_1\in\i V$.\qed

\begin{thm}
\label{est}
 Let $v\in V$ be primitive vector. Then the stabilizer in $U$ of the linear character $\alpha_v:U(\i)\to\C^{\times}$
is  $B(v)$.
\end{thm}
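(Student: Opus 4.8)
The plan is to reduce the statement to Proposition~\ref{hom6}. The starting point is a reinterpretation of the stabilizer condition: for $g\in U$ and $k\in U(\i)$, since $g$ preserves $h$ and $U(\i)$ is normal in $U$, one computes
$$
\alpha_v^g(k)=\alpha_v(g^{-1}kg)=\mu\big(h((g^{-1}kg)v,v)\big)=\mu\big(h(k(gv),gv)\big)=\alpha_{gv}(k).
$$
Thus $\alpha_v^g=\alpha_{gv}$ for every $g\in U$, whence $\mathrm{Stab}_U(\alpha_v)=\{g\in U\,:\,\alpha_{gv}=\alpha_v\}$. Observe also that $gv$ is primitive whenever $v$ is, since $g\in\GL_A(V)$.

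I would first dispatch the inclusion $B(v)\subseteq\mathrm{Stab}_U(\alpha_v)$ by treating the two kinds of generator of $B(v)=C(v)N$ separately. Any $z\in N$ lies in $Z(U)$, hence stabilizes every character of $U(\i)$; equivalently $\alpha_{zv}=\alpha_v$ because $h(z(kv),zv)=z^*z\,h(kv,v)=h(kv,v)$. For $g\in C(v)$, write $gv=v+u_0$ with $u_0\in\i V$ and verify $\alpha_{gv}=\alpha_v$ directly: since $\i^2=0$ one has $ku_0=u_0$, $h(u_0,u_0)=0$ and $h(kv-v,u_0)=0$ for $k\in U(\i)$, so expanding the hermitian form collapses $h(k(v+u_0),v+u_0)$ to $h(kv,v)+h(v,u_0)+h(u_0,v)$, and the correction $h(v,u_0)+h(u_0,v)=h(v,u_0)+h(v,u_0)^{*}$ lies in $R$, where $d$ — and hence $\mu$ — is trivial. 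As $N\subseteq Z(U)$, the product $C(v)N$ is a subgroup, so $B(v)\subseteq\mathrm{Stab}_U(\alpha_v)$.

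For the reverse inclusion, let $g\in\mathrm{Stab}_U(\alpha_v)$. By the first paragraph $\alpha_{gv}=\alpha_v$, and $v,gv$ are both primitive, so Proposition~\ref{hom6} supplies $z\in N$ and $u_0\in\i V$ with $gv=zv+u_0$. Then $(z^{-1}g)v=v+z^{-1}u_0$ with $z^{-1}u_0\in\i V$, so $z^{-1}g\in C(v)$ and therefore $g\in zC(v)\subseteq C(v)N=B(v)$, which finishes the argument.

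All the genuine work is already hidden in Proposition~\ref{hom6}; granting it, the theorem is little more than bookkeeping. The one place demanding some care is the direct check that $C(v)$ fixes $\alpha_v$, where one must invoke $\i^2=0$ to kill every cross term and the vanishing of $d$ on $R$ (equivalently (C2) with $\varepsilon=1$) to absorb the surviving symmetric contribution.
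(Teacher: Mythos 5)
Your proof is correct and follows essentially the same route as the paper: derive $\alpha_v^g = \alpha_{gv}$, apply Proposition \ref{hom6} for the hard inclusion, and show $B(v)\subseteq\mathrm{Stab}_U(\alpha_v)$ using that $N$ is central and that $C(v)$ stabilizes $\alpha_v$. The only cosmetic difference is that the paper deduces the $C(v)$ part from the previously established fact that $\beta_v$ is a homomorphism of $C(v)$ extending $\alpha_v$, whereas you verify it by a direct expansion of $h(k(v+u_0),v+u_0)$; both arguments are fine.
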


\noindent{\it Proof.} As $B(v)=C(v)N$, the group $N$ central in
$U$ and $\alpha_v$ is the restriction of $\beta_v:C(v)\to\C^{\times}$ to
$U(\i)$,
 we see that $B(v)$ stabilizes $\alpha_v$. On the other hand, suppose $g_0\in
U$ stabilizes $\alpha_v$ and $w=g_0v$. Then $w$ is primitive and
every $g\in U(\i)$ satisfies
$$
\alpha_v(g)=\alpha_v(g_0^{-1}gg_0)=\mu(h(g_0^{-1}gg_0v,v))=\mu(h(gg_0v,g_0v))=\mu(h(gw,w))=\alpha_w(g).
$$
It follows from Proposition \ref{hom6} that $g_0\in B(v)$.\qed

\section{Irreducible constituents of $Top$}
\label{sectop}

We wish to decompose the Weil module $X$ into irreducible
constituents. Let $Top$ be the subspace of $X$ spanned by all
$e_v$ with $v\in T$ primitive, and $Bot$ the subspace of $X$
spanned by all $e_v$ with $v\in T$ not primitive. Then
$$
\dim\,Bot=q^{(\ell-1)m},\; \dim\,Top=q^{\ell m}-q^{(\ell-1)m}.
$$
In view of the action (\ref{actu}) of $U$ on $X$, it is clear that
$Top$ and $Bot$ are $U$-submodules of $X$. We clearly have
$X=Top\oplus Bot$. We will see in \S\ref{secbot} that if $\ell=1$
then $Bot$ is the trivial $U$-module and if $\ell>1$ then $Bot$
affords a Weil representation of primitive type for a unitary
group of rank $m$ associated to a ramified quadratic extension,
where the nilpotency degree of the maximal ideal of the base ring
is $\ell-1$. Thus, reasoning by induction on $\ell$, it suffices
to decompose $Top$ into irreducible constituents. This is the
purpose of this section.

Let $\chi$ be an irreducible constituent of $Top$. By (\ref{actu})
the restriction of $Top$ to the abelian normal subgroup $U(\i)$ is
the sum of all $\alpha_v$, $v\in T$ with $v$ primitive.
Thus $\chi$ lies over a given $\alpha_v$, with $v$ primitive.
 By Theorem \ref{est} the stabilizer of $\alpha_v$ is $B(v)$. Now
 $B(v)=NC(v)$ and
$\alpha_v$ extends to the linear character $\beta_v:C(v)\to\C^{\times}$.
Since $N$ is abelian, we may extend $\beta_v$ to a linear
character $\gamma_v:B(v)\to\C^{\times}$ (see Lemma \ref{abex}). By
Gallagher's theorem (Corollary 6.17 of \cite{10}) the irreducible
characters of $B(v)$ lying over $\alpha_v$ are of the form
$\gamma_v\tau$, where $\tau$ runs over all irreducible characters
of $B(v)$ that are trivial on $U(\i)$. By Clifford Theory (Theorem
6.11 of \cite{10}) there is bijection $\tau\mapsto
ind_{B(v)}^{U}{\gamma_v\tau}$ from the set of all irreducible
characters of $B(v)$ lying over $\alpha_v$ and those of $U$ lying
over $\alpha_v$. Thus $\chi=ind_{B(v)}^{U}{\gamma_v\tau}$ for a
unique $\tau$. We will show that the $\tau$ that actually occur in
the decomposition of $Top$ are those trivial on $C(v)$. Since
$B(v)/C(v)$ is abelian, all these $\tau$ are actually linear
characters, whence $\chi$ is a monomial character.

Consider the equivalence relation $\sim$ on $V\setminus yV$, given
by
$$
v\sim w\text{ if there is }g\in U\text{ such that }gv\equiv w\mod
\i V.
$$
Let $S$ be set of representatives of primitive vectors for $\sim$.
We may assume that $S$ is contained in $T$, the transversal for
$\i V$ in $V$ used in \S\ref{con}. For $s\in S$ let
$$
Top(s) = \mbox{$U$-submodule of $Top$ generated by $e_s$.}
$$
By means of
(\ref{actu}) we see that $Top(s)$ has a basis consisting of all
$e_v$ such that $v$ is primitive and $v\sim s$.

\begin{prop}
\label{sor}
The $U$-submodules $Top(s)$, $s\in S$, are disjoint, i.e., they have no isomorphic irreducible constituents in common.
Moreover, their sum is $Top$. In particular,
$$
Top=\underset{s\in S}\oplus  Top(s).
$$
\end{prop}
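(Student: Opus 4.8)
The plan is to show that the decomposition $Top = \bigoplus_{s\in S} Top(s)$ follows almost immediately from the structure of the $U$-action on the basis $\{e_v : v \in T \text{ primitive}\}$ described in (\ref{actu}), together with the definition of the equivalence relation $\sim$. First I would record the elementary observation that the relation $\sim$ on $V \setminus yV$ is indeed an equivalence relation: reflexivity is clear (take $g = 1$), symmetry follows by applying $g^{-1}$ and noting that $gv \equiv w \bmod \i V$ implies $v \equiv g^{-1}w \bmod \i V$ since $\i V$ is $U$-invariant, and transitivity follows by composing the two group elements, again using $U$-invariance of $\i V$. Consequently $V \setminus yV$ is partitioned into $\sim$-classes, and the set $T$ of primitive vectors in $V$ (which, since $T$ is a transversal for $V_0 = \i V$, meets each class in a well-defined subset) is likewise partitioned. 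Here I am using that a vector is primitive precisely when $v \notin \r V = yV$, and that $\i V = y^\ell V \subseteq yV$, so primitivity is a property of the $\sim$-class.

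Next I would establish that each $Top(s)$ has the asserted basis. By (\ref{actu}), for $g \in U$ and $v \in T$ primitive we have $ge_v = \mu(h(gv,v'))e_{v'}$ where $v' \in T$ is the unique representative with $gv \equiv v' \bmod V_0 = \i V$; in particular $ge_v$ is a nonzero scalar multiple of $e_{v'}$ with $v' \sim v$ and $v'$ again primitive. Hence the $U$-submodule generated by $e_s$ is spanned by those $e_v$ with $v \in T$, $v$ primitive, and $v \sim s$; conversely every such $e_v$ is reached from $e_s$ (using symmetry of $\sim$ to find $g$ with $ge_s$ proportional to $e_v$), so $Top(s)$ has exactly the basis claimed. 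Since the primitive vectors of $T$ are partitioned by $\sim$ into the classes represented by $S$, the spanning sets of the various $Top(s)$ are disjoint and together exhaust the basis $\{e_v : v \in T \text{ primitive}\}$ of $Top$. Therefore $Top = \bigoplus_{s \in S} Top(s)$ as $U$-modules; this is the direct sum claimed in the final displayed equation, and the "moreover their sum is $Top$" assertion.

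It remains to prove the first and strongest assertion: that the $Top(s)$, for distinct $s \in S$, have no isomorphic irreducible constituents in common. For this I would invoke the Clifford-theoretic analysis already set up: by (\ref{actu}) the restriction of $Top$ to the abelian normal subgroup $U(\i)$ decomposes as $\bigoplus_{v} \alpha_v$ over primitive $v \in T$, and more precisely $Top(s)$ restricted to $U(\i)$ is $\bigoplus_{v \sim s,\, v \in T \text{ prim}} \alpha_v$. The key point is that if $v \sim w$ then $\alpha_v$ and $\alpha_w$ are $U$-conjugate (indeed if $gv \equiv w \bmod \i V$ then $\alpha_v^{g^{-1}} = \alpha_w$, since $\alpha_w(k) = \mu(h(kw,w))$ depends on $w$ only modulo $\i V$ — because $h(\i V, V) \subseteq \i$ and $\i$ is its own annihilator, so $k \in U(\i)$ acting gives equal values; this should be checked, using $k v \equiv v \bmod \i V$ and bilinearity). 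Conversely, by Proposition \ref{hom6}, if $v, w$ are primitive with $\alpha_v$ and $\alpha_w$ lying in the same $U$-orbit, then applying a suitable $g$ we get $\alpha_{gv} = \alpha_w$, hence $gv = zw + u_0$ with $z \in N \subseteq U$ and $u_0 \in \i V$, so $z^{-1}gv \equiv w \bmod \i V$ and thus $v \sim w$. Therefore the $\sim$-classes of primitive vectors are in bijection with the $U$-orbits of the linear characters $\alpha_v$ occurring in $Top$, and distinct $Top(s)$ restrict on $U(\i)$ to sums over disjoint $U$-orbits of characters. Since every irreducible constituent of $Top$ lies over exactly one $U$-orbit of characters of $U(\i)$ (Clifford theory), no irreducible can be a constituent of two different $Top(s)$. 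I expect the main obstacle to be the careful verification that $\alpha_v$ depends only on the class of $v$ modulo $\i V$ and the clean translation between $\sim$-classes and $U(\i)$-character orbits via Proposition \ref{hom6}; the direct-sum statement itself is then a formal consequence.
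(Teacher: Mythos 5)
Your proof is correct and follows essentially the same route as the paper's: both deduce the direct-sum decomposition from (\ref{actu}) and the disjointness of irreducible constituents from Proposition~\ref{hom6} applied to the $U(\i)$-characters $\alpha_w$ appearing in each $Top(s)$. You simply make explicit several steps (that $\sim$ is an equivalence relation, that $\alpha_v$ depends only on $v \bmod \i V$, and the Clifford-theoretic translation to $U$-orbits of characters) that the paper leaves implicit.
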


\noindent{\it Proof.} Let $v\in T$ be primitive. Then $v\sim s$ for some $s\in S$. Thus $gs\equiv v\mod\i V$ for some $g\in U$
and by (\ref{actu}) $e_v\in Top(s)$. This shows that $Top$ is contained in the given sum. The linear characters
of $U(\i)$ appearing in $Top(s)$ are all $\alpha_w$, where $w\in T$ is primitive and $w\sim s$. By Proposition \ref{hom6}
the set of these $\alpha_w$ is disjoint from the set corresponding to any other $s'\in S$, $s'\neq s$.\qed

Let $v\in V$. We have the linear character $\beta_v\vert_{C(v)\cap N}$ of $C(v)\cap N$. This is an abelian group,
so we may extend $\beta_v\vert_{C(v)\cap N}$ to a linear character $\delta_v:N\to\C^{\times}$. We define
$$
\gamma_v(gz)=\beta_v(g)\delta_v(z),\quad g\in C(v), z\in N.
$$
\begin{lem}
\label{abex} $\gamma_v$ is a well-defined linear character of $B(v)$.
\end{lem}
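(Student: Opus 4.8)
The plan is to verify directly the two things that ``well-defined linear character'' requires: first, that the value $\gamma_v(gz)=\beta_v(g)\delta_v(z)$ does not depend on how an element of $B(v)=C(v)N$ is written as a product of an element of $C(v)$ and an element of $N$; and second, that the resulting function $B(v)\to\C^{\times}$ is multiplicative. The only ingredients I will use are that $C(v)$ is a subgroup of $U$, that $N$ lies in the centre of $U$ (as recorded when $N$ was identified with a subgroup of $Z(U)$), that $\beta_v\colon C(v)\to\C^{\times}$ and $\delta_v\colon N\to\C^{\times}$ are group homomorphisms, and, crucially, that $\delta_v$ was chosen to restrict to $\beta_v$ on the overlap $C(v)\cap N$.

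For well-definedness I would argue as follows. Suppose $g_1z_1=g_2z_2$ with $g_1,g_2\in C(v)$ and $z_1,z_2\in N$. Then $g_2^{-1}g_1=z_2z_1^{-1}$ lies in $C(v)\cap N$, so the compatibility of $\delta_v$ with $\beta_v$ on that subgroup gives $\beta_v(g_2)^{-1}\beta_v(g_1)=\beta_v(g_2^{-1}g_1)=\delta_v(z_2z_1^{-1})=\delta_v(z_2)^{-1}\delta_v(z_1)$; rearranging yields $\beta_v(g_1)\delta_v(z_1)=\beta_v(g_2)\delta_v(z_2)$, so the formula for $\gamma_v$ is unambiguous.

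It then remains to check $\gamma_v$ is a homomorphism. Given $g_1z_1,g_2z_2\in B(v)$, centrality of $N$ in $U$ lets me rewrite $g_1z_1g_2z_2=(g_1g_2)(z_1z_2)$ with $g_1g_2\in C(v)$ and $z_1z_2\in N$, and then $\gamma_v(g_1z_1g_2z_2)=\beta_v(g_1g_2)\delta_v(z_1z_2)=\beta_v(g_1)\beta_v(g_2)\delta_v(z_1)\delta_v(z_2)=\gamma_v(g_1z_1)\gamma_v(g_2z_2)$, using that $\beta_v$ and $\delta_v$ are each multiplicative. Since all values lie in $\C^{\times}$, $\gamma_v$ is a linear character of $B(v)$. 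There is no genuine obstacle here; the single point that needs care is the well-definedness step, which is precisely where the compatibility of $\delta_v$ with $\beta_v$ on $C(v)\cap N$ is essential — without it the defining formula would not even make sense.
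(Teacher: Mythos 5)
Your proof is correct and follows essentially the same route as the paper: the well-definedness step, exploiting that $\delta_v$ restricts to $\beta_v$ on $C(v)\cap N$, is identical. You additionally spell out the multiplicativity check (using centrality of $N$ in $U$), which the paper leaves implicit; that is a reasonable bit of extra care but not a different argument.
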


\noindent{\it Proof.} Suppose $g_1z_1=g_2z_2$ for $g_1,g_2\in C(v)$ and $z_1,z_2\in N$. Then
$g_2^{-1}g_1=z_2z_1^{-1}\in C(v)\cap N$. Since $\delta_v$ extends $\beta_v\vert_{C(v)\cap N}$, we have
$$
\delta_v(z_2z_1^{-1})=\beta_v(z_2z_1^{-1})=\beta_v(g_2^{-1}g_1).
$$
Thus
$$
\delta_v(z_2)\delta(z_1^{-1})=\beta_v(g_2^{-1})\beta_v(g_1),
$$
so
$$
\beta_v(g_2)\delta_v(z_2)=\beta_v(g_1)\delta(z_1).\qed
$$

Suppose next $v\in V$ is primitive. We easily see that in this
case
$$
C(v)\cap N=(1+\i)\cap N,
$$
so
\begin{equation}
\label{indbv} B(v)/C(v)=C(v)N/C(v)\cong N/C(v)\cap N=N/(1+\i)\cap
N \cong (1+\i)N/(1+\i).
\end{equation}

Thanks to (\ref{indbv}) we may identify $B(v)/C(v)$ with $N/N\cap
(1+\i)$. It follows from Lemma \ref{abex} that the linear
characters of $B(v)$ extending $\beta_v$ are the form
$\phi\gamma_v$, where $\phi$ is a linear character of $N$ trivial
on $N\cap (1+\i)$, considered as a linear character of $B(v)$
trivial on $C(v)$.

Let $G$ be the group of linear characters of $N$ trivial on $N\cap
(1+\i)$. For $v\in T$, still primitive, and $\phi\in G$ we
consider the element $E_{\phi,v}$ of $Top$ defined as follows:
$$
E_{\phi,v}=(\underset{z\in N}\sum \phi(z^{-1})\gamma_v(z^{-1})z)e_v.
$$
Then $zE_{\phi,v}=\phi(z)\gamma_v(z)E_{\phi,v}$ and $gE_{\phi,v}=\beta_v(g)E_{\phi,v}$ for $z\in N$ and $g\in C(v)$.

\begin{lem}
\label{zac} Let $v\in T$ be primitive. Then

(a) $E_{\phi,v}\neq 0$.

(b) The stabilizer of $\C E_{\phi,v}$ in $U$ is $B(v)$.
\end{lem}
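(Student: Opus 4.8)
The plan is to prove (a) by a direct computation that collapses the defining sum of $E_{\phi,v}$ onto cosets, and (b) by reducing to Theorem~\ref{est}. For part (a), set $K=C(v)\cap N$; by the displayed equality $C(v)\cap N=N\cap(1+\i)$ this is precisely the subgroup on which $\phi$ is trivial and on which $\gamma_v$ restricts to $\beta_v$. First I would record, from (\ref{actu}) and the primitivity of $v$ (so that $zv$ is primitive for every unit $z$), that for each $z\in N$ the vector $ze_v$ equals a complex number of absolute value $1$ times a basis vector $e_{v'}$ with $v'\in T$ primitive and $v'\equiv zv\mod V_0$, and that two elements $z_1,z_2\in N$ yield the same $v'$ exactly when $z_2^{-1}z_1\in K$ (using that $V_0=\i V$ is an $A$-submodule, so the classes $zv+V_0$ are permuted compatibly). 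Next I would check that within a fixed coset $zK$ all summands coincide: for $c\in K$ one has $(zc)e_v=\beta_v(c)\,ze_v$ since $c\in C(v)$, $\phi((zc)^{-1})=\phi(z^{-1})$ since $\phi|_K=1$, and $\gamma_v((zc)^{-1})=\beta_v(c)^{-1}\gamma_v(z^{-1})$ since $\gamma_v|_{C(v)}=\beta_v$; multiplying these the factor $\beta_v(c)$ cancels. Hence $E_{\phi,v}=|K|\sum_{zK\in N/K}\phi(z^{-1})\gamma_v(z^{-1})\,ze_v$ is a linear combination of pairwise distinct basis vectors with coefficients of absolute value $|K|\neq 0$, so $E_{\phi,v}\neq 0$.

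For part (b), the inclusion $B(v)\subseteq\mathrm{Stab}_U(\C E_{\phi,v})$ is immediate from the two relations $zE_{\phi,v}=\phi(z)\gamma_v(z)E_{\phi,v}$ ($z\in N$) and $gE_{\phi,v}=\beta_v(g)E_{\phi,v}$ ($g\in C(v)$) recorded just before the lemma, together with $B(v)=C(v)N$. For the reverse inclusion I would first compute that $U(\i)$ acts on the line $\C E_{\phi,v}$ through the character $\alpha_v$: since $N\subseteq Z(U)$, every $g\in U(\i)$ commutes with each $z\in N$, and $ge_v=\alpha_v(g)e_v$ because $gv\equiv v\mod V_0$, whence $gE_{\phi,v}=\alpha_v(g)E_{\phi,v}$. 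Now suppose $g_0\in U$ stabilizes $\C E_{\phi,v}$, say $g_0E_{\phi,v}=\theta E_{\phi,v}$ with $\theta\in\C^{\times}$. For $g\in U(\i)$ we have on one hand $g_0gE_{\phi,v}=\alpha_v(g)\theta E_{\phi,v}$, and on the other hand, using $g_0gg_0^{-1}\in U(\i)$ by normality, $g_0gE_{\phi,v}=(g_0gg_0^{-1})g_0E_{\phi,v}=\alpha_v(g_0gg_0^{-1})\theta E_{\phi,v}$; comparing gives $\alpha_v(g_0gg_0^{-1})=\alpha_v(g)$ for all $g\in U(\i)$, i.e. $g_0$ stabilizes $\alpha_v$. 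Theorem~\ref{est} then yields $g_0\in B(v)$, completing the proof.

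The only genuinely delicate point is the bookkeeping in part (a): one must verify that the assignment $zK\mapsto v'$ is well defined and injective, so that the basis vectors appearing in $E_{\phi,v}$ really are pairwise distinct, and that averaging over each coset produces $|K|$ identical copies of one term rather than any cancellation. Everything in part (b) is formal once the $U(\i)$-action on the line has been identified as $\alpha_v$, and it is exactly there that the primitivity hypothesis is used, via Theorem~\ref{est}.
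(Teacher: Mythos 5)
Your proof is correct and, for part (a), follows essentially the same route as the paper: you decompose $N$ into cosets of $K = C(v)\cap N$, observe that the terms within a coset agree (with the $\beta_v(c)$ factors cancelling) so the sum collapses to $|K|$ times a sum over a transversal, and then use that distinct cosets produce scalar multiples of pairwise distinct basis vectors $e_{v'}$. For part (b) the paper is content to say "this follows from (\ref{actu})" without elaboration — presumably meaning that any $g$ stabilizing the line must permute the support $\{e_{v'} : v'+V_0\in N(v+V_0)\}$ and hence satisfy $gv\in Nv+V_0$, forcing $g\in NC(v)=B(v)$ directly. Your argument instead identifies the $U(\i)$-character on $\C E_{\phi,v}$ as $\alpha_v$ and invokes Theorem~\ref{est}; this is slightly more indirect but perfectly valid, and has the advantage of making explicit where the primitivity of $v$ enters (through the stabilizer computation of Theorem~\ref{est} rather than through a separate support argument).
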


\noindent{\it Proof.} (a) Let $D$ be a transversal for $M=N\cap (1+\i)$ in
$N$. Then by (\ref{actu})
{\small
\begin{equation}
\label{ela}
E_{\phi,v}=(\underset{b\in D}\sum\phi(b^{-1})\gamma_v(b^{-1})b)(\underset{z\in
M}\sum \beta_v(z^{-1})z)e_v=(\underset{b\in
D}\sum\phi(b^{-1})\gamma_v(b^{-1})d)|M|e_v\neq 0.
\end{equation}
}
(b) This follows from (\ref{actu}).\qed

\begin{thm}
\label{dsm} Fix $s\in S$. For $\phi\in G$ let $Top(\phi,s)$ be the $U$-submodule of $Top(s)$ generated by $E_{\phi,s}$. Then

(a) The character of $Top(\phi,s)$ is $ind_{B(v)}^U \phi\gamma_v$.

(b) The $U$-module $Top(\phi,s)$ is irreducible.

(c) The $U$-modules $Top(\phi,s)$, $\phi\in G$, are pairwise
non-isomorphic.

(d) $Top(s)=\underset{\phi\in G}\oplus Top(\phi,s)$.
\end{thm}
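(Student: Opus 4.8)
The plan is to assemble the four assertions from the Clifford-theoretic machinery already set up, with the element $E_{\phi,s}$ serving as an explicit generator that makes the bijections concrete. I would first recall from the discussion preceding Lemma~\ref{abex} and from \eqref{indbv} that the linear characters of $B(s)$ lying over $\alpha_s$ are exactly the $\phi\gamma_s$ with $\phi\in G$, and that by Gallagher's theorem together with Clifford theory (Theorem 6.11 of \cite{10}) the map $\phi\mapsto ind_{B(s)}^U \phi\gamma_s$ is a bijection from $G$ onto the set of irreducible characters of $U$ lying over $\alpha_s$ that restrict to $C(s)$ through $\beta_s$. That already delivers (b) and (c) at the level of abstract characters; the work is to identify $Top(\phi,s)$ concretely with the module affording $ind_{B(s)}^U \phi\gamma_s$.

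For (a), the key observation is that by Lemma~\ref{zac}(a) the vector $E_{\phi,s}$ is nonzero, and by the displayed eigenvalue relations just before Lemma~\ref{zac} the one-dimensional space $\C E_{\phi,s}$ affords the character $\phi\gamma_s$ of $B(s)$ — here $z E_{\phi,v}=\phi(z)\gamma_v(z)E_{\phi,v}$ handles the $N$-part and $gE_{\phi,v}=\beta_v(g)E_{\phi,v}$ the $C(v)$-part, and $\phi\gamma_v$ restricted to $C(v)$ is $\beta_v$ since $\phi$ is trivial on $C(v)$. By Lemma~\ref{zac}(b) the stabilizer of the line $\C E_{\phi,s}$ in $U$ is exactly $B(s)$, so the $U$-submodule it generates is a quotient of $ind_{B(s)}^U \C E_{\phi,s}$, i.e. of $ind_{B(s)}^U \phi\gamma_s$. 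To get equality of dimensions (hence equality of characters), I would count: the translates $z\cdot(gE_{\phi,s})$ for $g$ ranging over coset representatives of $B(s)$ in $U$ are supported on distinct basis vectors $e_w$ (the $w$ with $w$ primitive and $w\sim s$, distributed over the $U$-orbit), so they are linearly independent and there are $[U:B(s)]$ of them; since $ind_{B(s)}^U\phi\gamma_s$ also has dimension $[U:B(s)]$, the induced module maps isomorphically onto $Top(\phi,s)$. This simultaneously proves (a) and upgrades (b)/(c) from character statements to genuine module statements.

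For (d), I would argue that the $E_{\phi,s}$, $\phi\in G$, together with their $U$-translates span all of $Top(s)$: fixing a primitive $v\in T$ with $v\sim s$, the vectors $E_{\phi,v}$ (defined by the same formula with $v$ in place of $s$) form, as $\phi$ ranges over $G$, a basis for the subspace $\sum_{z\in N}\C(ze_v)$ of $Top(s)$, by the usual character-orthogonality / invertibility-of-the-Fourier-matrix argument on the group $N/(N\cap(1+\i))\cong G^\vee$ — this is where one uses that $G$ is precisely the full dual of $B(v)/C(v)$. Since $Top(s)$ has basis $\{e_w : w\text{ primitive},\ w\sim s\}$ and $U$ acts transitively (mod $\i V$) on these $w$, every $e_w$ lies in the $U$-span of the $\{ze_v : z\in N\}$, hence in $\sum_\phi Top(\phi,s)$. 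Thus $Top(s)=\sum_{\phi\in G}Top(\phi,s)$, and the sum is direct because by (c) the summands are pairwise non-isomorphic irreducibles. The main obstacle I anticipate is the bookkeeping in the dimension count for (a): one must check that as $g$ runs over a transversal of $B(s)$ in $U$ and $z$ over $N$, the vectors $z(gE_{\phi,s})$ land on genuinely distinct $e_w$'s with no collapse — equivalently that $B(s)$, not merely $C(s)$, is the full line-stabilizer, which is exactly what Lemma~\ref{zac}(b) (resting on Theorem~\ref{est}) provides. Everything else is routine Clifford theory.
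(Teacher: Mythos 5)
Your proposal is correct and follows the same approach as the paper: identify $\C E_{\phi,s}$ as a $B(s)$-eigenline affording $\phi\gamma_s$ via Lemma~\ref{zac}, deduce (a) from the monomial structure of the action (\ref{actu}), obtain (b) and (c) from Theorem~\ref{est} together with Clifford theory, and derive (d) from the invertibility of the character matrix of $N/(N\cap(1+\i))$ applied to (\ref{ela}). Your dimension-count elaboration in (a) --- checking that the translates $gE_{\phi,s}$, with $g$ ranging over a transversal of $B(s)$ in $U$, are supported on pairwise disjoint sets of basis vectors $e_w$ and hence give exactly $[U:B(s)]$ independent vectors --- just makes explicit what the paper compresses into ``it follows from~(\ref{actu}).''
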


\noindent{\it Proof.} (a) By Lemma \ref{zac} $\C E_{\phi,s}$ is 1-dimensional with stabilizer $B(v)$, which acts on
$\C E_{\phi,s}$ via $\phi\gamma_v$. It follows from (\ref{actu}) that $Top(\phi,s)$ has character $ind_{B(v)}^U \phi\gamma_v$.

(b),(c) This follows from (a), Theorem \ref{est} and Clifford Theory.

(d) As the $E_{\phi,s}$, $\phi\in G$, are linearly independent, it
follows from (\ref{ela}) that $e_s$ is in their span.\qed

\begin{thm}
\label{m3} (a) $Top$ is multiplicity free, with the following monomial irreducible constituents:
$$
Top=\underset{s\in S}\oplus\underset{\phi\in G}\oplus Top(\phi,s).
$$
(b) Let $\iota$ be the central involution of $U$ defined by
$v\mapsto -v$, and let $Top^+$ and $Top^-$ be the eigenspaces of
$\iota$ acting on $Top$. Let $G^+$ be the subgroup of $G$ of all
$\phi$ such that $\phi(\iota)=1$ and let $G^-$ be the coset of
$G_1$ in $G$ of all $\phi$ such that $\phi(\iota)=-1$. Then
$Top^+$ and $Top^-$ are $U$-submodules of $Top$, with
$$Top=Top^+\oplus Top^-,\quad \dim\,Top^+=\dim\,Top^-=(q^{\ell m}-q^{(\ell-1)m})/2$$
and $Top^+$, $Top^-$ have the following decompositions in irreducible constituents:
$$
Top^+=\underset{s\in S}\oplus\underset{\phi\in G^+}\oplus
Top(\phi,s),\quad Top^-=\underset{s\in S}\oplus\underset{\phi\in
G^-}\oplus Top(\phi,s).
$$
\end{thm}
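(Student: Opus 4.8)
The plan is to assemble part (a) directly from the structural results already proved in this section, and then to refine that decomposition under the action of the central involution $\iota$ for part (b). For part (a), I would argue as follows. By Proposition \ref{sor} we already have the $U$-module decomposition $Top=\bigoplus_{s\in S}Top(s)$ into submodules having no isomorphic irreducible constituents in common. By Theorem \ref{dsm}(d) each $Top(s)$ splits as $\bigoplus_{\phi\in G}Top(\phi,s)$, and by Theorem \ref{dsm}(b),(c) the summands $Top(\phi,s)$ are irreducible and pairwise non-isomorphic for fixed $s$. Combining these gives the displayed decomposition. For the multiplicity-free assertion I would note that two constituents $Top(\phi,s)$ and $Top(\phi',s')$ with $s\neq s'$ are non-isomorphic by Proposition \ref{sor}, and with $s=s'$ are non-isomorphic by Theorem \ref{dsm}(c); hence all constituents of $Top$ are pairwise non-isomorphic. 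Monomiality is already recorded in Theorem \ref{dsm}(a): each $Top(\phi,s)$ has character $\mathrm{ind}_{B(s)}^U \phi\gamma_s$ with $\phi\gamma_s$ linear (since $B(s)/C(s)$ is abelian and $\beta_s$ is linear on $C(s)$).

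For part (b), the key observation is that $\iota\in N$ (it is the norm-$1$ element $-1\in A^\times$), and $-1\notin N\cap(1+\i)$ since $-1\not\equiv 1\bmod\i$ as $R$ has odd characteristic; hence the evaluation map $\phi\mapsto\phi(\iota)$ is a well-defined homomorphism $G\to\{\pm1\}$ which is surjective (because $N/(N\cap(1+\i))$ has even order, $\iota$ being a non-identity element of order $2$ in it). Thus $G^+=\ker$ of this homomorphism is an index-$2$ subgroup and $G^-$ is its nontrivial coset, so $|G^+|=|G^-|=|G|/2$. Next I would determine how $\iota$ acts on each constituent: since $\iota\in N\le B(s)$ acts on $\mathbb C E_{\phi,s}$ via $\gamma_s(\iota)\phi(\iota)$, and $\iota$ is central in $U$, it acts on all of $Top(\phi,s)$ by the scalar $\gamma_s(\iota)\phi(\iota)$. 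One checks $\gamma_s(\iota)=1$: indeed $\gamma_s$ restricted to $N$ is $\delta_s$, which extends $\beta_s|_{C(s)\cap N}=\beta_s|_{(1+\i)\cap N}$, and $\beta_s(\iota)=\mu(h(\iota s,s))=\mu(-h(s,s))=\mu(h(s,s))^{-1}$; but $h(s,s)$ need not be real in general, so a cleaner route is to normalize the choice of the extension $\delta_s$ (equivalently of $\gamma_s$) so that $\delta_s(\iota)=1$ — this is possible precisely because $\iota\notin C(s)\cap N$, so no constraint from $\beta_s|_{C(s)\cap N}$ is violated — and to fix this normalization once and for all. With $\gamma_s(\iota)=1$, the scalar by which $\iota$ acts on $Top(\phi,s)$ is exactly $\phi(\iota)$, so $Top(\phi,s)\subseteq Top^+$ when $\phi\in G^+$ and $Top(\phi,s)\subseteq Top^-$ when $\phi\in G^-$. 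The decompositions of $Top^\pm$ then follow by regrouping the decomposition of part (a), and the dimension formula follows from $\dim Top=q^{\ell m}-q^{(\ell-1)m}$ together with $|G^+|=|G^-|$ and the fact that for each $s$ the modules $Top(\phi,s)$, $\phi\in G$, all have equal dimension $[U:B(s)]$ (visible from (\ref{ela}), or from the transitivity of the $\sim$-class of $s$).

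The main obstacle, and the point requiring the most care, is the interplay between the (inherently non-canonical) choice of the extensions $\delta_s$, $\gamma_s$ and the clean statement $\gamma_s(\iota)=1$. I would handle this by fixing, at the outset of the argument for part (b), a coherent normalization of $\delta_s$ on $N$ with $\delta_s(\iota)=1$ for every $s\in S$; since $\iota$ lies outside the subgroup $C(s)\cap N=(1+\i)\cap N$ on which $\delta_s$ is already prescribed, such a normalization exists, and it does not affect parts (a) or the earlier theorems, which only used the existence of some extension. Everything else is bookkeeping: verifying $\iota$ acts as the scalar $\phi(\iota)$ on $Top(\phi,s)$, that $G^+$ has index $2$, and regrouping the sum. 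I would also remark, to justify the phrase ``$G^-$ is the coset of $G^+$'', that surjectivity of $\phi\mapsto\phi(\iota)$ is equivalent to the existence of at least one linear character of $N$, trivial on $N\cap(1+\i)$, separating $\iota$ from $1$, which holds because $\iota$ is a nontrivial element of the finite abelian group $N/(N\cap(1+\i))$.
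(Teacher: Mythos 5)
Your proof is correct. For part (a) your argument is essentially identical to the paper's, which simply cites Proposition \ref{sor} and Theorem \ref{dsm}. For part (b) you actually go beyond the paper: the paper's proof consists of one sentence asserting that only the dimension claim needs verification and then supplies a symmetric transversal $T$ so that $Top^{\pm}$ are spanned by $e_v\pm e_{-v}$ for $v\in T$ primitive. It tacitly assumes that $\iota$ acts on $Top(\phi,s)$ by the scalar $\phi(\iota)$, i.e., it implicitly normalizes $\gamma_s(\iota)=1$. You correctly flag this as a genuine choice, since $\iota=-1\in N$ but $\iota\notin C(s)\cap N=(1+\i)\cap N$ (as $2\in R^{\times}$), and you resolve it cleanly by fixing $\delta_s(\iota)=1$ for every $s$ at the outset. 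Your observation that $\phi\mapsto\phi(\iota)$ is a surjective homomorphism $G\to\{\pm1\}$ (because $\iota$ has order $2$ in $N/(N\cap(1+\i))$ and characters of a finite abelian group separate points) is also exactly what is needed to justify that $G^{+}$ has index $2$ and $G^{-}$ is its nontrivial coset, a point the paper leaves implicit. Your dimension count via $|G^{+}|=|G^{-}|$ and $\dim Top(\phi,s)=[U:B(s)]$ independent of $\phi$ is a valid alternative to the paper's symmetric-transversal computation; the paper's route is slightly more direct and has the advantage of being independent of the normalization, but both are correct and give the same answer.
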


\noindent{\it Proof.} (a) This follows from Proposition \ref{sor} and Theorem \ref{dsm}.

(b) Only the statement about the dimensions of $Top^\pm$ requires a proof. To verify these dimensions, select the transversal $T$ for $\i V$ in $V$
to be symmetric,
in the sense that $v\in T$ if and only if $-v\in T$. Then $Top^+$ and $Top^-$ are
respectively spanned by $e_v+e_{-v}$ and $e_v-e_{-v}$ as $v$ runs through the primitive vectors of $T$.\qed

\section{Counting the irreducible constituents of $Top$}
\label{seccount}

This section finds the exact number of irreducible constituents of
$Top$. It turns out to be equal to the number of $U$-orbits of
$V\setminus y^2 V$. This fact is independently verified in
\S\ref{secbot}.

\begin{thm} Let $v,w\in V$ be primitive vectors satisfying $h(v,v)=h(w,w)$. Then there exists $g\in U$ such that $gv=w$.
\label{crux}
\end{thm}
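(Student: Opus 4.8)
The plan is to establish Witt-type extension for this hermitian form over $A$, i.e.\ any isometry between the cyclic $A$-submodules $vA$ and $wA$ — which exists here since $h(v,v)=h(w,w)$ and both $v,w$ are primitive — extends to an element of $U$. The first step is to reduce to the case where $v,w$ lie in a common orthogonal-type configuration: using Lemma \ref{qz} pick an orthogonal basis $v_1=v, v_2,\dots,v_m$ with $h(v_i,v_i)\in R^\times$; since $v$ is primitive and $h(v,v)\in R^\times$, the submodule $vA$ is a nondegenerate free rank-one summand, so $V = vA \perp v^\perp$ with $v^\perp$ free of rank $m-1$ and $h$ nondegenerate on it. Do the same for $w$, obtaining $V = wA \perp w^\perp$. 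Thus it suffices to show $v^\perp$ and $w^\perp$ carry equivalent nondegenerate hermitian forms, for then the map $v\mapsto w$ together with any such equivalence assembles to an element $g\in U$ with $gv=w$.

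To get the equivalence of $v^\perp$ and $w^\perp$, I would invoke a cancellation (Witt cancellation) statement for nondegenerate hermitian forms over $A$: since $vA \perp v^\perp \cong V \cong wA\perp w^\perp$ and $vA\cong wA$ as hermitian spaces (both are $\langle h(v,v)\rangle = \langle h(w,w)\rangle$), cancellation yields $v^\perp\cong w^\perp$. Over the finite local principal ring $A$ this cancellation is available — it is the kind of structural fact established in \cite{15}, whose Lemmas \ref{qz} is already being used — so I would cite it there. Alternatively, one can argue directly: an orthogonal basis of $v^\perp$ and one of $w^\perp$ each diagonalize $h$ into rank $m-1$ forms $\langle a_2,\dots,a_m\rangle$ and $\langle b_2,\dots,b_m\rangle$ with all $a_i,b_i\in R^\times$ (again by Lemma \ref{qz} applied to the restrictions), and these are equivalent iff they have the same rank and the same discriminant class in $R^\times/N(A^\times)$; since the two $m$-dimensional forms $\langle h(v,v),a_2,\dots\rangle$ and $\langle h(w,w),b_2,\dots\rangle$ are both equivalent to $h$ on $V$ and hence have equal discriminant, and $h(v,v)=h(w,w)$, the truncated discriminants agree, giving $v^\perp\cong w^\perp$.

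The main obstacle is the classification of nondegenerate hermitian forms over the ramified ring $A$ — specifically knowing that rank and discriminant (in $R^\times/N(A^\times)$, where $N$ is the norm) are a complete set of invariants for forms admitting an orthogonal basis with \emph{unit} diagonal entries, and that Lemma \ref{qz} guarantees such a basis after restricting to $v^\perp$. This is exactly where the ramified case differs from the unramified one: the norm map $A^\times\to R^\times$ is not surjective, so the discriminant invariant is genuinely nontrivial and there are two equivalence classes in each even... but the hypothesis $h(v,v)=h(w,w)$ pins down the class, so the obstruction vanishes. Modulo that structural input (from \cite{15}), the assembly of $g$ from $v\mapsto w$ and the orthogonal-complement equivalence is routine, and one checks $g\in U_m(A)=G(h)\cap\GL_A(V)$ directly since it preserves the orthogonal decomposition and $h$ on each piece.
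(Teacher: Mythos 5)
The paper itself gives no argument here --- it simply cites \cite{15}, Theorem 4.1 --- so there is no written proof to compare yours against on a step-by-step basis. That said, your sketch has a genuine gap: it silently assumes $h(v,v)\in R^{\times}$. You write ``since $v$ is primitive and $h(v,v)\in R^{\times}$, the submodule $vA$ is a nondegenerate free rank-one summand,'' but primitivity (i.e.\ $v\notin\mathfrak{r}V$) does \emph{not} imply that $h(v,v)$ is a unit. A primitive vector can have any length in $R$, including $0$; indeed, the paper's final theorem treats exactly the cases $t=h(s,s)=0$ and $t$ a non-unit is implicit throughout \S\ref{seccount} (where $\Lambda$ can be all of $R$). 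Once $h(v,v)$ is not a unit, $vA$ is no longer a non-degenerate summand, the orthogonal splitting $V=vA\perp v^{\perp}$ fails, Lemma~\ref{qz} does not apply to $v$, and the whole cancellation/discriminant argument collapses. This is not a peripheral omission: the introduction of the paper explicitly flags the non-unit case as the non-trivial part of the problem solved in \cite{15}.

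For the unit-length case your outline is reasonable --- split off $vA$ and $wA$, apply Witt cancellation (or the rank-plus-discriminant classification, which Theorem~\ref{xtipo} makes available over this $A$), and assemble $g$ from $v\mapsto w$ and an isometry of the complements. But to prove the theorem as stated you would still need a separate argument for primitive $v$ with $h(v,v)\notin R^{\times}$, e.g.\ a transvection-style or induction-on-$\ell$ argument along the lines of \cite{15}, and that is where the real content lies.
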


\noindent{\it Proof.} This can be found in \cite{15}, Theorem 4.1. \qed

\begin{prop}
\label{tot2}
The number of $U$-orbits of vectors in  $y V\setminus y^2 V$ is equal to the number of $U$-orbits of vectors in $V\setminus yV$.
\end{prop}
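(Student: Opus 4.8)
The plan is to construct an explicit $U$-equivariant bijection between the vectors of $yV\setminus y^2V$ and those of $V\setminus yV$ — or more precisely, between $U$-orbits on one set and $U$-orbits on the other — by multiplying by a suitable power of $y$. The natural candidate map sends a primitive vector $v$ to $yv$. Since $y$ generates the maximal ideal $\r$ of $A$ and $A$ acts $U$-linearly on $V$ (as $U$ consists of $A$-linear isometries of $h$), the map $v\mapsto yv$ commutes with the action of $U$. So the first step is to check that $v\mapsto yv$ carries $V\setminus yV$ into $yV\setminus y^2V$: if $v$ is primitive then $yv\in yV$, and $yv\notin y^2V$ precisely because $v\notin yV$ (one uses that $V$ is free, so $y^2V = y\cdot(yV)$ and $yv\in y^2V$ would force $v\in yV$ after cancelling one power of $y$, using that the annihilator of $y$ in $V$ is $y^{2\ell-1}V$ and a primitive vector cannot lie there for $\ell\geq 1$). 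Thus we get a well-defined $U$-equivariant map, and it induces a map on orbit sets.

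The second step is to see this induced map on orbits is surjective: given $w\in yV\setminus y^2V$, write $w=yv$ for some $v\in V$; then $v\notin yV$ (else $w\in y^2V$), so $v$ is primitive and $w$ is in the image. The third and main step is injectivity on orbits: suppose $v_1,v_2\in V\setminus yV$ are primitive and $yv_1$ and $yv_2$ lie in the same $U$-orbit, say $g(yv_1)=yv_2$, i.e. $y(gv_1-v_2)=0$. The vectors killed by $y$ are exactly those in $y^{2\ell-1}V$, so $gv_1-v_2\in y^{2\ell-1}V\subseteq \i V = V_0$ (since $2\ell-1\geq \ell$ for $\ell\geq 1$). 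Hence $gv_1\equiv v_2\bmod\i V$, which is exactly the relation $v_1\sim v_2$ from Section~\ref{sectop}; in particular $v_1$ and $v_2$ lie in the same $U$-orbit? Not immediately — we only get $gv_1\equiv v_2\bmod \i V$, not $gv_1=v_2$.

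This is the point I expect to be the main obstacle: closing the gap between the congruence $gv_1\equiv v_2\bmod\i V$ and genuine $U$-conjugacy $g'v_1=v_2$. The natural tool is Theorem~\ref{crux}: two primitive vectors $v,w$ with $h(v,v)=h(w,w)$ lie in one $U$-orbit. So I would show that $v_1$ and $v_2$ have the same value of $h(-,-)$ on the diagonal, up to a norm-$1$ adjustment. Concretely, from $gv_1\equiv v_2\bmod\i V$ and $h(\i V,\i V)=0$ (which follows from $\i^2=0$), one computes $h(v_2,v_2)=h(gv_1,gv_1)=h(v_1,v_1)$ modulo $\i$. By Lemma~\ref{qz} we may take $h(v_i,v_i)\in R^\times$, and then the discrepancy lives in $\i\cap R$; arguing as in the last paragraph of the proof of Proposition~\ref{hom6} (using that the squaring map on $1+(\i\cap R)$ is surjective since $|R|$ is odd), one finds $z\in N$ with $h(zv_1,zv_1)=h(v_2,v_2)$ exactly. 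Then Theorem~\ref{crux} applied to the primitive vectors $zv_1$ and $v_2$ gives $g'\in U$ with $g'(zv_1)=v_2$, i.e. $(g'z)v_1 = v_2$, so $v_1$ and $v_2$ are in the same $U$-orbit. Alternatively, and perhaps more cleanly, one can invoke Proposition~\ref{hom6} directly: the hypothesis $gv_1\equiv v_2\bmod\i V$ together with the relation $\alpha_{v_1}=\alpha_{gv_1}$ (since conjugation by $g$ permutes $U(\i)$) forces $\alpha_{v_2}=\alpha_{gv_1}\cdot(\text{something trivial})$, but here it is simplest just to note $v_2 = gv_1 + u_0$ with $u_0\in\i V$ and apply the norm-$1$ correction as above. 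Either way, the upshot is that the map $v\mapsto yv$ descends to a bijection on $U$-orbit sets, proving the proposition. \qed
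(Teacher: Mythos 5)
Your overall strategy matches the paper's: both use the $U$-equivariant map $v\mapsto yv$, observe surjectivity on orbits, and aim to reduce injectivity to Theorem~\ref{crux}. But the way you close the injectivity step has a genuine gap, and the fix you propose does not work.

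You correctly deduce from $g(yv_1)=yv_2$ that $gv_1-v_2$ lies in $y^{2\ell-1}V$, the annihilator of $y$ in $V$. But then you immediately weaken this to $gv_1\equiv v_2\bmod\i V$, and from there you only conclude $h(v_1,v_1)-h(v_2,v_2)\in\i\cap R$. That ideal is nonzero, so you still have a discrepancy to kill, and the ``norm-$1$ adjustment'' you invoke cannot kill it: for $z\in N$ one has $h(zv_1,zv_1)=z^*z\,h(v_1,v_1)=h(v_1,v_1)$, so scaling by a norm-one element leaves $h(v,v)$ unchanged and cannot make it agree with $h(v_2,v_2)$. The analogous manoeuvre at the end of Proposition~\ref{hom6} serves a different purpose: there one is allowed an extra additive error term $u_0\in\i V$ (the conclusion is $w=zv+u_0$, i.e.\ the relation $\sim$), not exact equality of vectors or exact equality of lengths. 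So that step does not produce the $z$ you want. (Incidentally, your appeal to Lemma~\ref{qz} to arrange $h(v_i,v_i)\in R^\times$ is also not available: $v_1,v_2$ are given primitive vectors and may well be isotropic when $m\geq 2$.)

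The missing idea is to keep the stronger containment $gv_1-v_2\in y^{2\ell-1}V$ and observe that $y^{2\ell-1}A\cap R=0$. Writing $u=gv_1-v_2\in y^{2\ell-1}V$, one has
\[
h(v_1,v_1)-h(v_2,v_2)=h(v_2,u)+h(u,v_2)+h(u,u)\in y^{2\ell-1}A,
\]
while the left side lies in $R$; since $y^{2\ell-1}A=x^{\ell-1}yR$ has trivial intersection with $R$, the difference is exactly zero. Hence $h(v_1,v_1)=h(v_2,v_2)$ on the nose, Theorem~\ref{crux} applies directly, and $v_1,v_2$ lie in the same $U$-orbit. No norm-one correction is needed, and none is available. (The paper packages this computation by introducing the reduced form $q$ on $yV$ with values in $A/y^{2\ell-1}A$, but the content is the same equality $y^{2\ell-1}A\cap R=0$.)
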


\noindent{\it Proof.} Let $E$ be a set of representatives for the
$U$-orbits of $V\setminus yV$. It is clear that every vector in
$yV\setminus y^2V$ is $U$-conjugate to a vector in $yE$. Thus the
map $E\to yE$, given by $e\mapsto ye$, is surjective. We claim
that it is also injective and, in fact, that if $e_1,e_2\in E$ and
$ye_1,ye_2$ are $U$-conjugate then $e_1=e_2$. For this purpose we
view $yV$ as a module for $A/y^{2\ell-1}A$ and consider the map
$q:yV\times yV\to A/y^{2\ell-1}A$ given by
$$
q(yv,yw)=h(v,w)+y^{2\ell-1}A,\quad v,w\in V.
$$
Given $g\in U$, for all $v,w\in V$ we have
$$
q(gyv,gyw)=q(ygv,ygw)=h(gv,gw)+y^{2\ell-1}A=h(v,w)+y^{2\ell-1}A=q(yv,yw),
$$
so the restriction of $g$ to $yV$ preserves $q$. Suppose $g\in U$
satisfies $gye_1=ye_2$. By above,
$$q(ye_2,ye_2)=q(gye_1,gye_1)=q(ye_1,ye_1),$$
which means
$$
h(e_1,e_1)-h(e_2,e_2)\in y^{2\ell-1}A\cap R=x^{\ell-1}yA\cap R=0.
$$
Thus $e_1,e_2$ are $U$-conjugate by Theorem \ref{crux}. Since
$e_1,e_2\in E$, we infer $e_1=e_2$.\qed

\begin{prop}
\label{hurra} Consider the equivalence relation $\simeq$ on
$V\setminus yV$, given by
$$
v\simeq w\text{ if }h(v,v)\equiv h(w,w)\mod \i\cap R.
$$
Then $\simeq$ is equal to the equivalence relation $\sim$ defined
in \S\ref{sectop}.
\end{prop}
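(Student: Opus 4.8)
The plan is to prove the two implications $v\sim w\Rightarrow v\simeq w$ and $v\simeq w\Rightarrow v\sim w$ for primitive $v,w$, using three elementary features of the ramified setting: $\varepsilon=1$; $\i$ is a $*$-invariant ideal with $\i^{2}=\r^{2\ell}=0$; and $a+a^{*}\in R$ for every $a\in A$ (explicitly, $(r+sy)+(r+sy)^{*}=2r$).

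For the first implication I would argue directly. Write $w=gv+u_0$ with $g\in U$ and $u_0\in\i V$. Expanding $h(w,w)$ bilinearly and using $h(gv,gv)=h(v,v)$ together with $h(u_0,u_0)\in\i^{*}\i=0$ gives
$$h(w,w)-h(v,v)=h(gv,u_0)+h(u_0,gv)=h(gv,u_0)+h(gv,u_0)^{*}.$$
Since $h$ is $A$-linear in the second slot and $u_0\in\i V$, the element $h(gv,u_0)$ lies in $\i$, so the right-hand side lies in $\i\cap R$; hence $h(v,v)\equiv h(w,w)\mod\i\cap R$, i.e. $v\simeq w$.

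For the converse I would move $w$ within its coset modulo $\i V$ so as to make its hermitian norm exactly $h(v,v)$, and then quote Theorem \ref{crux}. Set $r_0=h(v,v)-h(w,w)\in\i\cap R$ and $a=r_0/2$; since $2\in R^{\times}$ and $r_0\in\i$ we have $a\in\i$, and since $r_0\in R$ we have $a^{*}=a$, so $a+a^{*}=r_0$. Because $w$ is primitive, Lemma \ref{dual} yields $u_1\in V$ with $h(u_1,w)=1$, hence $h(w,u_1)=1$; then $u_0:=u_1a\in\i V$ satisfies $h(w,u_0)=h(w,u_1)a=a$. Using $h(u_0,u_0)=0$ once more,
$$h(w+u_0,w+u_0)=h(w,w)+h(w,u_0)+h(w,u_0)^{*}=h(w,w)+a+a^{*}=h(v,v),$$
and $w+u_0$ is still primitive, as $u_0\in\i V\subseteq\r V$ while $w\notin\r V$. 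Theorem \ref{crux} then supplies $g\in U$ with $gv=w+u_0\equiv w\mod\i V$, i.e. $v\sim w$.

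The computations here are routine; the only points that need a little care are the solvability of $h(w,u_0)=a$ with $u_0$ lying inside $\i V$ (which is where primitivity of $w$ enters, through Lemma \ref{dual}) and the fact that $w+u_0$ remains primitive so that Theorem \ref{crux} applies. The substantive input is Theorem \ref{crux} itself, imported from \cite{15}; granting it, I do not anticipate any genuine obstacle.
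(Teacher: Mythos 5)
Your proof is correct and follows essentially the same strategy as the paper: a direct hermitian-norm expansion for the forward implication, and for the converse a perturbation of $w$ within its coset modulo $\i V$ to force $h(w+u_0,w+u_0)=h(v,v)$, followed by Theorem~\ref{crux}. The only (minor) difference is that you guarantee $u_0\in\i V$ by multiplying a single vector $u_1$ with $h(w,u_1)=1$ by the scalar $a=r_0/2\in\i$, whereas the paper constructs $z$ with $h(z,V)\subset\i$ and invokes Lemma~\ref{qz} to conclude $z\in\i V$; both routes are valid and of comparable length.
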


\noindent{\it Proof.} Suppose first that $v\sim w$. Then $gv=w+u$
for some $g\in U$ and $u\in \i V$. Therefore
$$
h(v,v)=h(gv,gv)=h(w+u,w+u)=h(w,w)+h(w,u)+h(w,u)^*\equiv
h(w,w)\!\!\!\mod \i\cap R.
$$
Therefore $v\simeq w$.

Suppose conversely that $v\simeq w$. Then $h(v,v)=h(w,w)+r$, where
$r\in\i\cap R$. Since $w$ is primitive there is a basis
$w,w_2,\dots,w_m$ of $V$. As $h$ is non-degenerate there is $z\in
V$ such that $h(w,z)=r/2$ and $h(w_i,z)=0$. In particular,
$h(z,V)\subset \i$, so $z\in \i V$ by Lemma \ref{qz}. Using
$h(z,z)=0$ and $h(w,z)=r/2$ we deduce
$$
h(w+z,w+z)=h(w,w)+r=h(v,v).
$$
By Theorem \ref{crux} there is $g\in V$ such that $gv=w+z$, so
$v\sim w$.\qed

\begin{lem}
\label{sqb}  $[R^{\times}:R^{\times 2}]=2$.
\end{lem}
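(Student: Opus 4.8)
The plan is to use the structure of $R$ as a finite local principal ring to reduce the computation of $[R^\times:R^{\times 2}]$ to a statement about the residue field and the congruence filtration of $R^\times$. First I would recall that $R^\times$ fits into a short exact sequence $1\to 1+\m\to R^\times\to F_q^\times\to 1$, where the first factor is the principal units. Since $|R|$ is odd, the squaring map on $1+\m$ is bijective: indeed $1+\m$ is a finite group of odd order (its order is $q^{\ell-1}$, a power of the odd prime $p$), so squaring is an automorphism of $1+\m$. Hence $1+\m\subseteq R^{\times 2}$ and the squaring map on $R^\times$ induces the squaring map on the quotient $F_q^\times$, giving $[R^\times:R^{\times 2}]=[F_q^\times:F_q^{\times 2}]$.

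Next I would observe that $F_q^\times$ is cyclic of even order $q-1$ (even because $q$ is a power of an odd prime, so $q$ is odd and $q-1$ is even), and for a cyclic group of even order the squaring map has image of index exactly $2$. Thus $[F_q^\times:F_q^{\times 2}]=2$, and combining with the previous paragraph yields $[R^\times:R^{\times 2}]=2$. The argument I would actually write is short: the only genuinely needed facts are that $1+\m$ has odd order (so is uniquely $2$-divisible) and that a finite cyclic group of even order has exactly two square classes.

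The main obstacle — really the only point requiring care — is justifying that $1+\m$ has order prime to $2$ and hence that squaring on it is surjective; this is where the hypothesis on $R$ enters, namely that $R$ is a finite local ring of odd prime characteristic $p$, so that $|R|$, $|\m|$, and $|1+\m|=|\m^0|/|\m^{\ell-1}|\cdots$ (more simply $|1+\m|=|R|/|F_q^\times|\cdot 1/\ldots$) are all powers of $p$. One can avoid counting orders precisely: for any $u\in 1+\m$, the sequence $u,u^{p},u^{p^2},\dots$ stabilizes since $1+\m$ is a finite $p$-group, so $u$ has $p$-power order, hence odd order, hence $u$ is a square in $\langle u\rangle$. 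This gives $1+\m\subseteq R^{\times 2}$ cleanly. The rest is the elementary cyclic-group computation, which I would state without belaboring. A one- or two-line proof along these lines should suffice.

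\medskip\noindent\textit{Proof.} We have a short exact sequence $1\to 1+\m\to R^\times\to F_q^\times\to 1$. The group $1+\m$ is a finite $p$-group, since $R$ has characteristic $p$ and $\m$ is nilpotent; in particular $1+\m$ has odd order, so squaring is an automorphism of $1+\m$ and $1+\m\subseteq R^{\times 2}$. Hence squaring on $R^\times$ descends to squaring on $F_q^\times$ and $[R^\times:R^{\times 2}]=[F_q^\times:F_q^{\times 2}]$. Since $p$ is odd, $q$ is odd and $F_q^\times$ is cyclic of even order $q-1$, so $[F_q^\times:F_q^{\times 2}]=2$. Therefore $[R^\times:R^{\times 2}]=2$. \qed
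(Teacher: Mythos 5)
Your proof is correct and follows essentially the same route as the paper's: both pass through the exact sequence $1\to 1+\m\to R^\times\to F_q^\times\to 1$, note that squaring is surjective on the odd-order group $1+\m$, and reduce the index to $[F_q^\times:F_q^{\times 2}]=2$. The only (minor) addition in your write-up is the explicit observation that $1+\m$ is a $p$-group, which the paper leaves implicit in the phrase ``finite group of odd order.''
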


\noindent{\it Proof.} The kernel of the canonical map $R^{\times}\to
(R/\m)^{\times}$ is $1+\m$, which is a finite group of odd order. It
follows that the squaring map $1+\m\to 1+\m$ is an epimorphism.
This implies that $r$ is a square in $R^{\times}$ if and only if $r+\m$
is a square in $(R/\m)^{\times}$. Since $R/\m\cong F_q$ and
$[F_q^{\times}:F_q^{\times 2}]=2$, the result follows.\qed

\begin{lem}
\label{qubo} Let $Q:A^{\times} \to R^{\times}$ be the norm map $a\mapsto aa^*$.
Then $Q(A^{\times})=R^{\times 2}$.
\end{lem}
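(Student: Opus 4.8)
The plan is to show the two inclusions $Q(A^\times)\subseteq R^{\times 2}$ and $R^{\times 2}\subseteq Q(A^\times)$ separately, with the first being the easy ring-theoretic observation and the second the substantive part. For the first inclusion, take any $a=r+sy\in A^\times$; then $aa^*=r^2-s^2x$. Since $a$ is a unit and $\r=Ay$ is the maximal ideal, $a$ being a unit forces $r\in R^\times$ (otherwise $a\in\r$). Reducing mod $\m=Rx$ we get $aa^*\equiv r^2$, and since $1+\m$ consists of squares (as in the proof of Lemma \ref{sqb}, using that $|1+\m|$ is odd), $aa^*$ is itself a square in $R^\times$. Hence $Q(A^\times)\subseteq R^{\times 2}$.

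For the reverse inclusion I would argue that $Q(A^\times)$ is a subgroup of $R^\times$ of index at most $2$; combined with $Q(A^\times)\subseteq R^{\times 2}$ and Lemma \ref{sqb} ($[R^\times:R^{\times 2}]=2$) this forces $Q(A^\times)=R^{\times 2}$. To bound the index, consider the exact sequence $1\to N\to A^\times\xrightarrow{Q} R^\times$, where $N=\{z\in A^\times: zz^*=1\}$ is the norm-$1$ subgroup already in play in \S\ref{con}. So $|Q(A^\times)|=|A^\times|/|N|$, and it suffices to compute $|A^\times|$ and $|N|$ (or at least their ratio). We have $|A|=q^{2\ell}$ and $|\r|=|Ay|=q^{2\ell-1}$, so $|A^\times|=q^{2\ell}-q^{2\ell-1}=q^{2\ell-1}(q-1)$. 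For $|N|$: the involution $*$ fixes $R$ and negates $y$, so the fixed subring is $R$ and $a\mapsto a/a^*$ (for $a\in A^\times$) has image inside $N$; a cocycle/Hilbert-90–style count, or direct enumeration, should give $|N|=q^{\ell-1}(q+1)$ — one can see this by writing $z=r+sy$ with $r^2-s^2x=1$ and counting solutions level by level in the filtration by powers of $\r$, the residue-field level contributing the $q+1$ points of the norm-$1$ torus in $F_{q^2}/F_q$... except here the extension is ramified, so at the residue level $y\mapsto 0$ and the equation becomes $r^2=1$, giving $r=\pm1$; the remaining factor $q^{\ell-1}\cdot q^{\ell-1}/?$ must be tracked through the filtration. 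The cleanest route is: $|Q(A^\times)| = |A^\times|/|N|$, and independently $[R^\times : Q(A^\times)]$ divides $[R^\times:R^{\times 2}]=2$ since $Q(A^\times)\subseteq R^{\times 2}$; so it is enough to exhibit a single unit of $R$ that is a non-square and lies in $Q(A^\times)$ — no, that's backwards. Rather, it suffices to show $Q(A^\times)\supseteq R^{\times 2}$, i.e. every square $r^2$ ($r\in R^\times$) is a norm: but $r^2 = r\cdot r = r\cdot r^*$ since $r\in R$ is $*$-fixed, and $r\in A^\times$. That is immediate!

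So in fact the reverse inclusion is trivial: for $r\in R^\times$ we have $r\in A^\times$ and $Q(r)=rr^*=r^2$, hence $R^{\times 2}\subseteq Q(A^\times)$. Combined with the first paragraph this gives $Q(A^\times)=R^{\times 2}$, completing the proof. The only genuine content is the first inclusion $Q(A^\times)\subseteq R^{\times 2}$, and the only subtlety there is the passage "$aa^*\equiv r^2 \bmod \m$ and $r^2\in R^{\times 2}$, together with $1+\m\subseteq R^{\times 2}$, force $aa^*\in R^{\times 2}$"; this last fact is exactly the argument already used in Lemma \ref{sqb}, namely that a unit of $R$ is a square iff its image in $F_q^\times$ is a square, since $1+\m$ has odd order and is therefore $2$-divisible. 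I expect no real obstacle; the main thing to get right is to invoke Lemma \ref{sqb} and the odd-order-of-$1+\m$ observation cleanly rather than recomputing $|N|$.
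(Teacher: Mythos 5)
Your proof is correct, but it takes a different route from the paper's, and the difference is worth noting. Both proofs start from the trivial observation that $R^{\times 2}\subseteq Q(A^\times)$, since $Q(r)=rr^*=r^2$ for $r\in R^\times$. The paper then argues \emph{indirectly}: it invokes Lemma~\ref{sqb} to say $[R^\times:R^{\times 2}]=2$, and shows that $Q$ is \emph{not surjective} by reducing modulo $\r$ — the involution becomes trivial on the residue field $A/\r\cong F_q$, so the induced norm map $(A/\r)^\times\to(R/\m)^\times$ is the squaring map on $F_q^\times$, which misses half of $F_q^\times$. Combining non-surjectivity with the index-$2$ fact forces $Q(A^\times)=R^{\times 2}$. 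You instead prove the forward inclusion $Q(A^\times)\subseteq R^{\times 2}$ \emph{directly}: write $a=r+sy$ with $r\in R^\times$ forced, so $aa^*=r^2-s^2x=r^2\bigl(1-(s/r)^2x\bigr)$ with the second factor in $1+\m\subseteq R^{\times 2}$ (odd order), hence $aa^*\in R^{\times 2}$. Your argument is slightly more self-contained — it uses the ingredient of Lemma~\ref{sqb} ($1+\m$ consists of squares) but not the index statement itself — while the paper's is marginally slicker and generalizes more readily when an explicit formula for the norm is unavailable. One stylistic point: the middle of your second paragraph wanders through a Hilbert-90/$|N|$-counting detour that you abandon; you should excise it, since the final observation that $Q(r)=r^2$ for $r\in R^\times$ settles the reverse inclusion in one line and makes that whole digression unnecessary.
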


\noindent{\it Proof.} Clearly $R^{\times 2}\subseteq Q(A^{\times})$. In view of
Lemma \ref{sqb} it suffices to show that $Q$ is not surjective.

The involution that $*$ induces on $A/\r$ is the identity map,
whence the induced norm map $(A/\r)^{\times} \to (R/\m)^{\times}$ is the squaring
map of $F_q^{\times}$, so $Q$ is not surjective.\qed

\begin{thm}
\label{xtipo} Up to equivalence, there are exactly two non-degenerate
hermitian forms on $V$, depending on whether the determinant of the form, relative to any basis, is a square or not.
\end{thm}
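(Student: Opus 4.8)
The plan is to combine Lemmas~\ref{qz}, \ref{sqb} and \ref{qubo}: first that the determinant modulo squares is a well-defined invariant of a form, surjecting onto a two-element set, and then that it is a complete invariant.

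\emph{The invariant.} Let $\{v_1,\dots,v_m\}$ be any basis of $V$ and $G=(h(v_i,v_j))$ its Gram matrix. Writing $G^{*}$ for the conjugate transpose of $G$ (the transpose, with $*$ applied to each entry), the hermitian condition is $G^{*}=G$, so $(\det G)^{*}=\det(G^{*})=\det G$ and hence $\det G\in R$; since $h$ is non-degenerate, $G\in\GL_m(A)$, so in fact $\det G\in A^{\times}\cap R=R^{\times}$. Passing to another basis replaces $G$ by $g^{*}Gg$ for a suitable $g\in\GL_m(A)$, hence multiplies $\det G$ by $Q(\det g)\in Q(A^{\times})=R^{\times 2}$ (Lemma~\ref{qubo}). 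Thus the class of $\det G$ in $R^{\times}/R^{\times 2}$ depends only on the isometry class of the form, defining a map $\Phi$ from isometry classes of non-degenerate hermitian forms on $V$ to $R^{\times}/R^{\times 2}$; by Lemma~\ref{sqb} the target has order $2$, and $\Phi$ is surjective since $\Phi(\langle 1,\dots,1\rangle)$ is the trivial class while $\Phi(\langle 1,\dots,1,\varepsilon\rangle)$ is not, for $\varepsilon\in R^{\times}$ a non-square (here $\langle a_1,\dots,a_m\rangle$ denotes the form with Gram matrix $\mathrm{diag}(a_1,\dots,a_m)$, and $m\ge 1$). Hence there are at least two isometry classes, and it remains to show $\Phi$ is injective.

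\emph{Representing $1$.} The key step is that every non-degenerate hermitian form on a free $A$-module of rank $\ge 2$ represents the value $1$. Given such a form, use Lemma~\ref{qz} to pick an orthogonal basis $v_1,\dots,v_m$ with $a_i:=h(v_i,v_i)\in R^{\times}$, and look only at vectors $v_1r+v_2s$ with $r,s\in R$: for these one has $h(v_1r+v_2s,\,v_1r+v_2s)=a_1r^2+a_2s^2$, so it suffices to solve $a_1r^2+a_2s^2=1$ with $r,s\in R$. Modulo $\m$ this becomes a non-degenerate binary quadratic equation over $F_q$, which is solvable: the sets $\{a_1x^2:x\in F_q\}$ and $\{1-a_2y^2:y\in F_q\}$ each contain $(q+1)/2$ elements of $F_q$, so they meet. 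A solution over $F_q$ has a unit coordinate, and Hensel's lemma lifts it to a solution over $R$, the relevant partial derivative being $2a_i$ times a unit, hence a unit. This finite-field count together with the Hensel lift is the only genuine computation in the argument, and I expect it to be the (mild) obstacle; the rest is formal.

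\emph{Conclusion.} I claim every non-degenerate hermitian form on $V$ is isometric to $\langle 1,\dots,1,d\rangle$ where $d\in R^{\times}$ represents $\Phi$ of the form. Induct on $m$. For $m=1$ this is immediate, using $\langle d'\rangle\cong\langle d\rangle$ whenever $d/d'\in Q(A^{\times})=R^{\times 2}$ (Lemma~\ref{qubo}). For $m\ge 2$, the previous step furnishes $w\in V$ with $h(w,w)=1$; as $h(w,w)$ is a unit, the standard orthogonal-complement construction yields $V=wA\perp w^{\perp}$ with $w^{\perp}$ free of rank $m-1$ and $h$ non-degenerate on it, so by induction the form is $\cong\langle 1\rangle\perp\langle 1,\dots,1,d'\rangle=\langle 1,\dots,1,d'\rangle$ for some $d'\in R^{\times}$; then $d'$ represents $\Phi$ of the form by the first part, and $\langle d'\rangle\cong\langle d\rangle$ again by Lemma~\ref{qubo}, proving the claim. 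In particular two non-degenerate hermitian forms with the same value of $\Phi$ are isometric, so $\Phi$ is injective; combined with the first part, $\Phi$ is a bijection and there are exactly two isometry classes, distinguished according to whether the determinant (relative to any basis) is a square.
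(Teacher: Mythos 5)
Your argument is correct. The paper itself does not prove Theorem \ref{xtipo} internally; it simply cites Corollary 3.6 of \cite{15}. Your proof is a genuinely self-contained alternative built from results already established in this paper: Lemma \ref{qz} (diagonalization with unit diagonal), Lemma \ref{sqb} ($[R^{\times}:R^{\times 2}]=2$), and Lemma \ref{qubo} ($Q(A^{\times})=R^{\times 2}$). Your strategy is the classical Witt-style argument. The determinant-mod-squares invariant $\Phi$ is shown to be well-defined precisely because base change multiplies $\det G$ by a norm, and norms are exactly $R^{\times 2}$; then surjectivity of $\Phi$ is immediate and injectivity is reduced, by the ``represents $1$'' lemma and splitting off orthogonal complements, to the rank-one case, where Lemma \ref{qubo} again finishes. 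The one genuinely computational step, the isotropy argument over $F_q$ via the $(q+1)/2$ pigeonhole count followed by a Hensel lift (valid since $R$ has nilpotent maximal ideal and $2\in R^{\times}$), is exactly what is needed and is carried out correctly; the choice of which coordinate to Hensel on is handled by noting a nonzero coordinate exists. The only minor remarks: it is worth stating explicitly that $w^{\perp}$ is free because it is a direct summand of a free module over the local ring $A$, and your $\varepsilon$ for a fixed non-square unit clashes notationally with the paper's use of $\varepsilon$ for the sign $\pm 1$ in the (skew-)hermitian condition; the paper instead writes $\epsilon$ for the fixed non-square. Substantively, though, this is a complete and correct replacement for the external citation, which the paper does not provide.
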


\noindent{\it Proof.} This follows from \cite{15}, Corollary 3.6. \qed

\medskip

Given $r_1,\dots,r_m\in R^{\times}$ we say that $h$ is of type $\{r_1,\dots,r_m\}$ if there is a basis $B$ of $V$ relative to which $h$ has
matrix $\mathrm{diag}\{r_1,\dots,r_m\}$. In that case, $h$ is also of type  $\{s_1,\dots,s_m\}$, for $s_i\in R^{\times}$, if and only if
$(r_1\cdots r_m)(s_1\cdots s_m)^{-1}\in R^{\times 2}$.

We fix an element $\epsilon$ in $R^{\times}$ but not in $R^{\times 2}$. Thus
$R^{\times}=R^{\times 2}\cup \epsilon  R^{\times 2}$.

Let $\Lambda$ be the set of all values $h(u,u)$ taken on primitive
vectors $u\in V$ and let $K$ be the number of $U$-orbits of
$V\setminus yV$. It follows from Theorem \ref{crux} that
$K=|\Lambda|$.

\begin{thm}
\label{tiso} (a) Suppose $m=1$. If $h$ is of type $\{1\}$ then
$\Lambda=R^{\times 2}$ and if $h$ is of type $\{\epsilon\}$ then
$\Lambda=R^{\times} \setminus R^{\times 2}$.

(b) Suppose $m=2$. If $h$ is of type $\{1,-1\}$ then $\Lambda=R$ and if
$h$ is of type $\{1,-\epsilon\}$ then $\Lambda=R^{\times}$.

(c) If $m>2$ then  $\Lambda=R$.
\end{thm}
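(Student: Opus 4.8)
The plan is to strip Theorem~\ref{tiso} down to a statement about a diagonal quadratic form over $R$ and then reduce modulo the maximal ideal $\m$. Since $\Lambda$ depends only on the equivalence class of $h$, Theorem~\ref{xtipo} allows me to fix an orthogonal basis $v_1,\dots,v_m$ of $V$ with $h(v_i,v_i)=r_i\in R^{\times}$, in standard form: $r_1\in\{1,\epsilon\}$ if $m=1$; $(r_1,r_2)\in\{(1,-1),(1,-\epsilon)\}$ if $m=2$, these being the two classes because $-1$ and $-\epsilon$ represent the two cosets of $R^{\times 2}$ in $R^{\times}$ by Lemma~\ref{sqb}; and $r_1,\dots,r_m$ arbitrary if $m>2$. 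Writing $u=\sum_i v_i a_i$, one has $h(u,u)=\sum_i r_i\,a_i a_i^{*}$, the vector $u$ is primitive exactly when some $a_i$ is a unit of $A$, and on the subring $R\subseteq A$ the norm $a\mapsto a a^{*}$ is just squaring.

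For the inclusion $\Lambda\subseteq(\text{the asserted set})$: if $m=1$, primitivity forces $a_1\in A^{\times}$, so $a_1a_1^{*}\in R^{\times 2}$ by Lemma~\ref{qubo} and $h(u,u)\in r_1 R^{\times 2}$, which equals $R^{\times 2}$ if $r_1=1$ and $R^{\times}\setminus R^{\times 2}$ if $r_1=\epsilon$. If $m=2$ and $h$ is of type $\{1,-\epsilon\}$, then reducing $h(u,u)=a_1a_1^{*}-\epsilon\,a_2a_2^{*}$ modulo $\m$ gives $\bar a_1^{2}-\bar\epsilon\,\bar a_2^{2}$ over $F_q$ (the involution being trivial on $A/\r$); since $\bar\epsilon$ is a non-square in $F_q$ by Lemma~\ref{sqb}, this binary form is anisotropic and can vanish only for $\bar a_1=\bar a_2=0$, which contradicts primitivity, so $h(u,u)\in R^{\times}$. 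In the two remaining cases ($m=2$ of type $\{1,-1\}$, and $m>2$) the inclusion $\Lambda\subseteq R$ is vacuous.

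For the inclusion $\Lambda\supseteq(\text{the asserted set})$ it suffices to realize each target value $c$ already by a primitive vector with all coordinates $a_i$ in $R$, i.e.\ to represent $c$ by the diagonal quadratic form $\sum_i r_i X_i^{2}$ over $R$ using a vector one of whose coordinates is a unit. The engine is a Hensel-type sublemma: if $\sum_i\bar r_i X_i^{2}$ represents $\bar c$ over $F_q$ by a nonzero vector, then $\sum_i r_i X_i^{2}$ represents $c$ over $R$ by a unimodular vector. Indeed, lift a nonzero residual solution to a tuple $(a_i)$ over $R$ with some $a_{i_0}\in R^{\times}$; if $\sum_i r_i a_i^{2}-c=x^k t$ with $t\in R$, replace $a_{i_0}$ by $a_{i_0}-x^k t\,(2r_{i_0}a_{i_0})^{-1}$, which is legitimate since $2,r_{i_0},a_{i_0}\in R^{\times}$; this pushes the discrepancy into $\m^{2k}\subseteq\m^{k+1}$ while $a_{i_0}$ remains a unit, so after finitely many steps the discrepancy vanishes. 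It then remains to observe that the asserted sets are exactly the preimages in $R$ of the appropriate residual representation sets: $\bar r_1 X^{2}$ represents the nonzero coset $\bar r_1 F_q^{\times 2}$, whose preimage is $r_1 R^{\times 2}$; the hyperbolic form $X^{2}-Y^{2}$ represents all of $F_q$ by nonzero vectors, with preimage $R$; the anisotropic norm form $X^{2}-\bar\epsilon Y^{2}$ represents exactly $F_q^{\times}$, with preimage $R^{\times}$; and when $m>2$, the nondegenerate form $\bar r_1 X_1^{2}+\bar r_2 X_2^{2}+\bar r_3 X_3^{2}$ in three variables is isotropic by Chevalley--Warning, hence represents every element of $F_q$, each value by a nonzero vector (use a nontrivial zero for the value $0$), with preimage $R$. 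For $m=1$ one may bypass the sublemma, and for $m=2$ of type $\{1,-1\}$ one may take $a_1=(1+c)/2$, $a_2=(c-1)/2$ outright.

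The case $m>2$ is where I expect the real work to lie, and the only one where the three-variable residual input together with the lifting sublemma is genuinely needed: when $-1\notin R^{\times 2}$ and all the $r_i$ fall in a single square class, every binary subform is anisotropic, so the third coordinate cannot be dropped. A secondary point requiring care is that a unit coordinate must be available at every stage of the lift---this is automatic when the target is a unit and is supplied by isotropy when the target is $0$---so that the lifted vector is primitive.
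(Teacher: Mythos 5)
The paper supplies no proof for Theorem~\ref{tiso}; it merely cites \cite{15}, Lemma~3.7. Your argument is a self-contained replacement for that citation, and I believe it is correct.

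The structure is sound: using Lemma~\ref{qz} and Theorem~\ref{xtipo} to put $h$ in diagonal standard form over $R$; using the identity $\overline{aa^*}=\bar a^2$ in $A/\r\cong F_q$ (involution trivial on the residue field) together with the observation that $a\in A^\times$ iff $\bar a\neq 0$, which converts the primitivity constraint into a nonvanishing condition on residues; and handling the upper bound $\Lambda\subseteq(\text{asserted set})$ via Lemma~\ref{qubo} when $m=1$ and via anisotropy of $X^2-\bar\epsilon Y^2$ over $F_q$ when $m=2$ of type $\{1,-\epsilon\}$. The lower bound via the Newton/Hensel sublemma is correct: the update $a_{i_0}\mapsto a_{i_0}-x^k t(2r_{i_0}a_{i_0})^{-1}$ cancels the degree-$k$ discrepancy exactly (the cross term contributes $-x^k t$) and pushes the error into $\m^{2k}$, while $a_{i_0}$ stays a unit because it changes only by an element of $\m$; the iteration terminates since $\m^\ell=0$. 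The residual representability inputs (squares for $m=1$, hyperbolic plane for type $\{1,-1\}$, norm form for type $\{1,-\epsilon\}$, Chevalley--Warning isotropy plus the affine-line argument for $m>2$) are all standard and correctly invoked, and the preimage computation uses Lemma~\ref{sqb} correctly. The one point worth spelling out slightly more is the passage from ``unimodular $R$-tuple'' back to ``primitive vector of $V$'': for $a\in R\subseteq A$ one has $a\in A^\times\iff a\in R^\times$ (because $R\cap\r=\m$), so a unimodular tuple over $R$ does give a primitive vector; you use this implicitly. Overall this is a clean, elementary argument in the same spirit one expects from the cited source (reduce to the residue field and lift), and it stands on its own.
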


\noindent{\it Proof.} This is \cite{15}, Lemma 3.7. \qed

\begin{thm}
\label{nexo}
Let $L$ be the number of irreducible constituents of $Top$. Then $L$ is equal to the number of $U$-orbits of $V\setminus y^2V$.
\end{thm}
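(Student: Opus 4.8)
The plan is to compute $L$ explicitly from the decomposition of Theorem \ref{m3}(a) and match it against the orbit count. Since Theorem \ref{m3}(a) exhibits $Top$ as the \emph{multiplicity-free} direct sum $\bigoplus_{s\in S}\bigoplus_{\phi\in G}Top(\phi,s)$, we have $L=|S|\cdot|G|$, so it remains to evaluate these two cardinalities and the number of orbits. For the orbit side, observe that $U$ acts $A$-linearly and hence preserves the chain $V\supseteq yV\supseteq y^2V$; thus $V\setminus y^2V$ is the disjoint union of the $U$-stable sets $V\setminus yV$ and $yV\setminus y^2V$, and by Proposition \ref{tot2} the second of these has the same number of orbits as the first, namely $K$ (in the notation introduced before Theorem \ref{tiso}). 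So $V\setminus y^2V$ has exactly $2K$ orbits, and the whole theorem reduces to the identity $L=2K$, where $K=|\Lambda|$ by Theorem \ref{crux}.

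Next I would compute $|S|$ and $|G|$. By Proposition \ref{hurra} the relation $\sim$ on primitive vectors coincides with $\simeq$, so the $\sim$-classes are in bijection with the values $h(v,v)$, $v$ primitive, read modulo $\i\cap R$; a valuation count (using $y^2=x$, so that the $\r$-adic valuation of an element of $R$ is even) identifies $\i=\r^\ell$ with the set of elements of valuation $\geq\ell$ and gives $\i\cap R=\m^{\lceil\ell/2\rceil}$, whence $|S|=|\bar\Lambda|$, the size of the image $\bar\Lambda$ of $\Lambda$ in $R/\m^{\lceil\ell/2\rceil}$. For $|G|$: since $G$ is the character group of $N/(N\cap(1+\i))$ we have $|G|=|N|/|N\cap(1+\i)|$; Lemmas \ref{qubo} and \ref{sqb} give $|N|=|A^{\times}|/|R^{\times 2}|=2|A^{\times}|/|R^{\times}|=2q^\ell$ (as $|A^{\times}|/|R^{\times}|=|A|/|R|=q^\ell$); and because $\i^2=\r^{2\ell}=0$, an element $1+a$ with $a\in\i$ lies in $N$ exactly when $a+a^*=0$, i.e.\ when $a$ is skew, so $N\cap(1+\i)=1+(\i\cap Ry)$, and the same valuation count gives $\i\cap Ry=\m^{\lfloor\ell/2\rfloor}y$, of order $q^{\lceil\ell/2\rceil}$. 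Hence $|G|=2q^{\lfloor\ell/2\rfloor}$.

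The crux is then the identity $|\Lambda|=q^{\lfloor\ell/2\rfloor}|\bar\Lambda|$, equivalently that $\Lambda$ is a union of cosets of the additive subgroup $\m^{\lceil\ell/2\rceil}$ of $R$. This follows from Theorem \ref{tiso}: in every case $\Lambda$ is one of $R$, $R^{\times}$, $R^{\times 2}$, or $R^{\times}\setminus R^{\times 2}$, and each of these is a union of cosets of $\m$ — for $R^{\times 2}$ this is precisely the fact established inside the proof of Lemma \ref{sqb}, that squareness in $R^{\times}$ depends only on the residue in $F_q$ — hence also a union of cosets of $\m^k$ for every $k\geq 1$. Since $|\m^{\lceil\ell/2\rceil}|=q^{\ell-\lceil\ell/2\rceil}=q^{\lfloor\ell/2\rfloor}$, the identity follows, and therefore
$$
L=|S|\cdot|G|=|\bar\Lambda|\cdot 2q^{\lfloor\ell/2\rfloor}=2\big(q^{\lfloor\ell/2\rfloor}|\bar\Lambda|\big)=2|\Lambda|=2K,
$$
which is exactly the number of $U$-orbits of $V\setminus y^2V$ computed above.

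The step I expect to be the main obstacle is the bookkeeping in the last paragraph: one must correctly identify both $\i\cap R$ and the skew part $\i\cap Ry$ of $\i$ as powers of $\m$ (the exponents interchange with the parity of $\ell$, and one must also keep the orders $|N|$, $|A^{\times}|$, $|R^{\times}|$ straight), and one must be confident that each of the four possibilities for $\Lambda$ in Theorem \ref{tiso} is genuinely saturated under $\m^{\lceil\ell/2\rceil}$. Once the square-class case is reduced to the already-known residue-field statement, everything else is the assembly of orders carried out above.
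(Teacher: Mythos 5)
Your proposal is correct, and it follows the same overall architecture as the paper's proof: both reduce $L$ to $|S|\cdot|G|$ via Proposition \ref{sor} and Theorem \ref{dsm}, both use Proposition \ref{tot2} to identify the orbit count with $2K$, both invoke Proposition \ref{hurra} together with the case analysis of $\Lambda$ from Theorem \ref{tiso} to compute $|S|$, and both ultimately rest on Lemmas \ref{sqb} and \ref{qubo}. Where you genuinely diverge is in the evaluation of $|G|=|N(1+\i)/(1+\i)|$. The paper works abstractly: it passes to the quotient ring $A/\i$, identifies $N(1+\i)/(1+\i)$ as the kernel of the induced norm map $(A/\i)^{\times}\to(R/R\cap\i)^{\times}$, asserts that the arguments of Lemmas \ref{sqb} and \ref{qubo} carry over to that quotient so that the image has index two, and then counts orders to derive $|N(1+\i)/(1+\i)|=2|R\cap\i|$. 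You instead compute $|N|=2q^{\ell}$ directly from $|N|=|A^{\times}|/|R^{\times 2}|$, and compute $|N\cap(1+\i)|=q^{\lceil\ell/2\rceil}$ by the elementary observation that $\i^2=0$ forces $1+a\in N$ precisely when $a$ is skew, so that $N\cap(1+\i)=1+(\i\cap Ry)=1+\m^{\lfloor\ell/2\rfloor}y$. Your route is more concrete and avoids having to re-justify the square-index and norm-image lemmas over the quotient $A/\i$, which (as the paper itself notes in \S\ref{secdeg}) is not strictly a ramified quadratic extension when $\ell$ is odd; the paper's route is slightly more structural and reuses the norm-map machinery already set up. You also state explicitly the saturation of $\Lambda$ under $\m^{\lceil\ell/2\rceil}$, tracing it back to the residue-field characterization of squares inside the proof of Lemma \ref{sqb}; the paper asserts the resulting coset counts case by case without isolating this observation, so your formulation is a modest clean-up of that step as well.
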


\noindent{\it Proof.} Let $S$ be set of representatives of primitive vectors for the equivalence relation $\sim$ defined in \S\ref{sectop}.
We know from Proposition \ref{sor} that $Top$ is the direct sum of $|S|$ submodules $Top(s)$.
Furthermore, by Theorem \ref{dsm}, each $Top(s)$ is the direct sum of $|N/N\cap (1+\i)|=|N(1+\i)/(1+\i)|$ irreducible constituents. Thus
$$
L=|S||N(1+\i)/1+\i|
$$
On the other hand, by Proposition \ref{tot2}, the number of $U$-orbits of $V\setminus y^2V$ is $2K$, where $K$
is the number of $U$-orbits of $V\setminus yV$.
Thus, we need to verify that
\begin{equation}
\label{k2}
2K=|S||(1+\i)N/(1+\i)|.
\end{equation}
We claim that
\begin{equation}
\label{y2}
|S|=K/|R\cap\i|.
\end{equation}
Indeed, Proposition \ref{hurra} ensures that $|S|$ is the number
of values $h(u,u)$, with $u$ primitive, that are distinct modulo
$R\cap\i$. Moreover, as indicated above, $K=|\Lambda|$ is the
number values $h(u,u)$, with $u$ primitive. A full description of
$\Lambda$ is given in Theorem \ref{tiso}. Three cases arise.
$\Lambda=R$, in which case $|S|=|R|/|R\cap\i|=K/|R\cap\i|$;
$\Lambda=R^{\times 2}$ or $\Lambda=R^{\times}\setminus R^{\times 2}$, and in both
cases $|S|=|R^{\times 2}/1+R\cap\i|=K/|R\cap\i|$; $\Lambda=R^{\times}$, in
which case $|S|=|R^{\times}/1+R\cap\i|=K/|R\cap\i|$. This proves
(\ref{y2}) in all cases.

Substituting (\ref{y2}) in (\ref{k2}) reduces our verification to
\begin{equation}
\label{k3}
2|R\cap\i|=|(1+\i)N/(1+\i)|.
\end{equation}
Now $A/\i$ inherits an involution from $A$ and $R/R\cap\i\cong
(R+\i)/\i$ naturally imbeds in $A/\i$ as the fixed ring of this
involution, yielding a norm map $(A/\i)^{\times}\to (R/R\cap \i)^{\times}$. The
proofs of Lemmas \ref{sqb} and \ref{qubo} apply to show that the
image of the norm map $(A/\i)^{\times}\to (R/R\cap \i)^{\times}$ has index 2 in
$(R/R\cap \i)^{\times}$ and coincides with the group of squares of
$(R/R\cap \i)^{\times}$. The kernel of the norm map $(A/\i)^{\times}\to (R/R\cap
\i)^{\times}$  is easily seen to be $N(1+\i)/(1+\i)$. We deduce
$$
|N(1+\i)/(1+\i)|\times |(R/R\cap \i)^{\times 2}|=|(A/\i)^{\times}|,
$$
or
$$
|N(1+\i)/(1+\i)|\times |(R/R\cap \i)^{\times}|=2|(A/\i)^{\times}|.
$$
Since $R/R\cap\i$ and $A/\i$ are local rings, we see that $R^{\times}\to (R/R\cap \i)^{\times}$ and $A^{\times} \to (A/\i)^{\times}$
are epimorphisms with kernels $1+R\cap\i$ and $1+\i$, respectively. Therefore
\begin{equation}
\label{k4}
|N(1+\i)/(1+\i)|\times |R^{\times}|/|R\cap \i|=2|A^{\times}|/|\i|.
\end{equation}
On the other hand, from
$$
q^\ell-q^{\ell-1}=(q^{2\ell}-q^{2\ell-1})/q^\ell,
$$
we deduce
$$
|R^*|=|A^{\times}|/|\i|.
$$
Substituting this in (\ref{k4}) yields (\ref{k3}).

\begin{cor}
\label{uva} The number of irreducible constituents of $Top$ is:

\noindent $\bullet$ $q^\ell-q^{\ell-1}$ if $m=1$.

\noindent $\bullet$ $2(q^\ell-q^{\ell-1})$ if $m=2$ and $h$ is of type $\{1,-\epsilon\}$.

\noindent  $\bullet$ $2q^\ell$ if $m>2$ or $m=2$ and $h$ is of type $\{1,-1\}$.
\end{cor}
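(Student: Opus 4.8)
The plan is to combine Theorem~\ref{nexo} with the explicit description of the value set $\Lambda$ provided by Theorem~\ref{tiso}. By Theorem~\ref{nexo}, the number $L$ of irreducible constituents of $Top$ equals the number of $U$-orbits on $V\setminus y^2V$. Since $U$ acts $A$-linearly it preserves each of the subsets $yV$ and $y^2V$, so $V\setminus y^2V$ decomposes as the disjoint union of the $U$-stable pieces $V\setminus yV$ and $yV\setminus y^2V$; by Proposition~\ref{tot2} these two pieces carry the same number of $U$-orbits, say $K$, and therefore $L=2K$. Finally, Theorem~\ref{crux} identifies $K$ with $|\Lambda|$, the number of distinct values $h(u,u)$ attained on primitive vectors $u\in V$, so the whole problem reduces to evaluating $|\Lambda|$.

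To evaluate $|\Lambda|$ I would simply read it off from Theorem~\ref{tiso}, using that $R$ is a principal local ring with residue field $F_q$ and maximal ideal $\m$ of nilpotency degree $\ell$, so that $|R|=q^\ell$, $|\m|=q^{\ell-1}$, hence $|R^\times|=q^\ell-q^{\ell-1}$, while $[R^\times:R^{\times 2}]=2$ by Lemma~\ref{sqb} gives $|R^{\times 2}|=(q^\ell-q^{\ell-1})/2$. When $m=1$, Theorem~\ref{tiso}(a) gives $\Lambda=R^{\times 2}$ for $h$ of type $\{1\}$ and $\Lambda=R^\times\setminus R^{\times 2}$ for $h$ of type $\{\epsilon\}$; in either case $|\Lambda|=(q^\ell-q^{\ell-1})/2$, whence $L=2K=q^\ell-q^{\ell-1}$, and this count is the same for both forms, which by Theorem~\ref{xtipo} exhaust the equivalence classes. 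When $m=2$ and $h$ is of type $\{1,-\epsilon\}$, Theorem~\ref{tiso}(b) gives $\Lambda=R^\times$, so $|\Lambda|=q^\ell-q^{\ell-1}$ and $L=2(q^\ell-q^{\ell-1})$. When $m>2$, or $m=2$ with $h$ of type $\{1,-1\}$, Theorem~\ref{tiso} gives $\Lambda=R$, so $|\Lambda|=q^\ell$ and $L=2q^\ell$.

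I do not expect any real obstacle here: the corollary is a bookkeeping consequence of the substantial results already established, and the only care required is to track the orders of $R$, $R^\times$ and $R^{\times 2}$ and to match each ``type'' in the hypothesis of Theorem~\ref{tiso} with the corresponding case of the statement. The one mildly delicate point is conceptual rather than computational: the dependence of the count on the underlying hermitian form is genuinely visible only when $m=2$, whereas for $m=1$ it washes out once the factor of $2$ coming from $L=2K$ is taken into account.
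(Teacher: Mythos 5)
Your proposal is correct and follows exactly the route the paper intends: read off $L$ from Theorem~\ref{nexo}, write $L=2K$ via Proposition~\ref{tot2} (noting the $U$-stable decomposition $V\setminus y^2V=(V\setminus yV)\sqcup(yV\setminus y^2V)$), identify $K=|\Lambda|$ via Theorem~\ref{crux}, and evaluate $|\Lambda|$ from Theorem~\ref{tiso} together with $|R|=q^\ell$, $|R^\times|=q^\ell-q^{\ell-1}$ and $[R^\times:R^{\times 2}]=2$ from Lemma~\ref{sqb}. The arithmetic and the matching of the ``types'' to the three cases are all in order.
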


\section{Bottom is a Weil module for a unitary group over a quotient ring}
\label{secbot}

This section is similar in spirit to \S 5 and \S 6 of \cite{1}, with
some technical differences.

\begin{lem}
\label{b1}
 The subgroup $(0,y^{2\ell-1}V)$ of the Heisenberg
group $H$ acts trivially on $Bot$.
\end{lem}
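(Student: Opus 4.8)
The plan is to show directly that for any $v \in V$, the element $(0, y^{2\ell-1}v)$ of $H$ fixes every basis vector $e_w$ of $Bot$ (where $w \in T$ is not primitive) up to the scalar $1$. First I would recall from (\ref{actu}) and the construction of $X$ in \S\ref{con} that an element $(0, u)$ of $H$ acts on $e_w = (0,w) \otimes \zeta$ by
\[
(0,u) e_w = (0, u+w) \otimes \zeta = (0, w')(0, u+w-w')(-h(w', u+w-w'), 0) \otimes \zeta = \mu(-h(w', u+w-w')) e_{w'},
\]
where $w' \in T$ is the representative of $u + w$ modulo $V_0 = \i V$. So the key point is to identify $w'$ and compute the scalar when $u = y^{2\ell-1}v$.

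The main observation is that $y^{2\ell-1}v \in y^{2\ell-1}V \subseteq \i V = V_0$, since $2\ell - 1 \geq \ell$ (as $\ell \geq 1$). Therefore $u + w \equiv w \pmod{V_0}$, which forces $w' = w$ (both being the representative in $T$ of the same coset). The scalar then simplifies to $\mu(-h(w, y^{2\ell-1}v))$. Now $h(w, y^{2\ell-1}v) = y^{2\ell-1} h(w,v) \in y^{2\ell-1}A = \i \cdot y^{\ell-1}A \subseteq \i$, and since $\i = \r^\ell$ is its own annihilator and $\mu$ is primitive with $\i^\perp = \i$, we have $\mu(\i) = 1$; hence the scalar is $1$. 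This shows $(0, y^{2\ell-1}v) e_w = e_w$ for every non-primitive $w \in T$, so $(0, y^{2\ell-1}V)$ acts trivially on $Bot$.

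I do not anticipate a serious obstacle here: the only thing to be careful about is the bookkeeping of which transversal representative appears, and the elementary inequality $2\ell - 1 \geq \ell$ that makes $y^{2\ell-1}V$ land inside $V_0$. One should also note that $(0, y^{2\ell-1}V)$ is genuinely a subgroup of $H$ — this is immediate since $h$ vanishes on $y^{2\ell-1}V \times y^{2\ell-1}V$ modulo nothing (indeed $h(y^{2\ell-1}v, y^{2\ell-1}w) = y^{4\ell-2}h(v,w) = 0$ as $4\ell - 2 \geq 2\ell$), so the multiplication $(0,a)(0,b) = (h(a,b), a+b)$ restricts to ordinary vector addition on this set, making it a subgroup isomorphic to $y^{2\ell-1}V$ as an additive group.
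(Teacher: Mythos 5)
There is a genuine error in the key step of your argument. You observe that $h(w, y^{2\ell-1}v) \in y^{2\ell-1}A \subseteq \i$ and then conclude "since $\i$ is its own annihilator and $\mu$ is primitive\dots we have $\mu(\i)=1$." This is exactly backwards: primitivity of $\mu$ means that $\ker\mu$ contains \emph{no} non-zero ideals of $A$, so in particular $\i \not\subseteq \ker\mu$, i.e.\ $\mu$ is \emph{not} trivial on $\i$. (Concretely, $y^{2\ell-1}A = x^{\ell-1}Ry$, and $d(x^{\ell-1}ry)=2x^{\ell-1}r$ sweeps out the minimal ideal $\m^{\ell-1}$ of $R$, on which the primitive $\l$ is non-trivial.) So the containment $h(w,y^{2\ell-1}v)\in\i$ by itself does not give scalar $1$, and indeed, for a \emph{primitive} $w$ the action of $(0,y^{2\ell-1}V)$ on $e_w$ is genuinely non-trivial — which is why the lemma is stated only for $Bot$.

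The ingredient you declared at the outset but then never used is that $w$ is \emph{not} primitive, i.e.\ $w\in yV$. With $w = yw'$, the hermitian pairing actually vanishes identically: $h(w, y^{2\ell-1}v) = h(yw', y^{2\ell-1}v) = (y)^*\,y^{2\ell-1}\,h(w',v) = -y^{2\ell}h(w',v) = 0$, and symmetrically $h(y^{2\ell-1}v, w)=0$. That is what makes the scalar equal to $\mu(0)=1$, and it is precisely what the paper's proof exploits (it notes $h(v,w)=0$ for $v\in yV\cap T$, $w\in y^{2\ell-1}V$, then moves $(0,w)$ past $(0,v)$ and into the subgroup $(A,V_0)$). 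A further, minor slip: in your first displayed line you write $(0,u)e_w = (0,u+w)\otimes\zeta$, but $(0,u)(0,w)=(h(u,w),u+w)$, so the correct scalar picked up is $\mu\bigl(h(u,w)-h(w',u+w-w')\bigr)$; this reduces to $\mu\bigl(h(u,w)-h(w,u)\bigr)$ here, and again it is the vanishing $h(u,w)=0=h(w,u)$ — coming from $w\in yV$ and $u\in y^{2\ell-1}V$ — that makes it $1$, not any claimed triviality of $\mu$ on $\i$.
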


\noindent{\it Proof.} Let $v\in yV\cap T$ and $w\in y^{2\ell-1}V$.
We wish to see that $(0,w)e_v=e_v$. Since $h(v,w)=0$ and
$y^{2\ell-1}V\subseteq y^\ell V$, we have
$$
(0,w)e_v=(0,w)(0,v)\otimes z=(0,v)(0,w)\otimes z=(0,v)\otimes
(0,w)z=(0,v)\otimes z=e_v.
$$

\begin{prop}
\label{b2} We have an isomorphism of $H$-modules
$$
X\cong ind_{(A,y V)}^H Bot,
$$
where $Bot$ is an irreducible $(y^2A, y V)$-module of dimension
$q^{(\ell-1)m}$.
\end{prop}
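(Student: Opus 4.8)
The plan is to establish the $H$-module isomorphism by transitivity of induction and then, separately, to prove irreducibility of $Bot$ over $(y^2A,yV)$ by reducing it, via Clifford theory, to the computation of the stabilizer of a linear character — this last computation being the one genuinely delicate point.

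\emph{The $H$-module isomorphism.} Recall from \S\ref{con} that $\mathrm{res}_H X\cong\mathrm{ind}_{(A,V_0)}^H Z$, where $V_0=\i V=y^\ell V$ and $Z$ is the one-dimensional $(A,V_0)$-module on which $(a,u)$ acts by $\mu(a)$. As $\ell\ge1$ we have $V_0\subseteq yV$, giving a chain $(A,V_0)\le(A,yV)\le H$, so transitivity of induction yields $\mathrm{res}_H X\cong\mathrm{ind}_{(A,yV)}^H\big(\mathrm{ind}_{(A,V_0)}^{(A,yV)}Z\big)$. It thus suffices to identify $Bot$, as an $(A,yV)$-module, with $\mathrm{ind}_{(A,V_0)}^{(A,yV)}Z$. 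Here $T\cap yV$ is a transversal of $V_0$ in $yV$ (since $V_0\subseteq yV$), so $\{(0,u):u\in T\cap yV\}$ is a transversal of $(A,V_0)$ in $(A,yV)$, and by definition $Bot$ is spanned by the $e_u$, $u\in T\cap yV$. Using (\ref{actu}) one checks that $Bot$ is $(A,yV)$-stable, because $w,u\in yV$ forces $w+u\in yV$, hence the vector $u'\in T$ with $u'\equiv w+u\pmod{V_0}$ lies in $yV$. The obvious linear bijection $\mathrm{ind}_{(A,V_0)}^{(A,yV)}Z\to Bot$ sending $(0,u)\otimes\zeta$ to $e_u$ is then $(A,yV)$-equivariant, since on both sides the action is computed by multiplying in $H$ and straightening the result through $(A,V_0)$ against $\mu$; the scalars agree. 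This gives $X\cong\mathrm{ind}_{(A,yV)}^H Bot$.

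\emph{Reduction of the irreducibility statement.} From $h(yV,yV)\subseteq y^2A$ and $h(V_0,V_0)=y^{2\ell}A=0$ it follows that $(y^2A,yV)$ and $(y^2A,V_0)$ are subgroups of $H$; moreover $(y^2A,V_0)$ is abelian, and it is normal in $(y^2A,yV)$ by (\ref{conj}) since $h(yV,V_0)\subseteq y^{\ell+1}A\subseteq y^2A$. Restricting $Z$ to $(y^2A,V_0)$ produces the linear character $W$ given by $W(c,u_0)=\mu(c)$. Because $h(V_0,yV)\subseteq y^{\ell+1}A\subseteq y^2A$ as well, the set $\{(0,u):u\in T\cap yV\}$ is also a transversal of $(y^2A,V_0)$ in $(y^2A,yV)$, so the very same argument as above identifies $Bot$, now as a $(y^2A,yV)$-module, with $\mathrm{ind}_{(y^2A,V_0)}^{(y^2A,yV)}W$. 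This module has dimension $[yV:V_0]=q^{(\ell-1)m}$, agreeing with $\dim Bot$ as computed in \S\ref{secbot}, and by Clifford theory it is irreducible precisely when $\mathrm{Stab}_{(y^2A,yV)}(W)=(y^2A,V_0)$.

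\emph{The stabilizer computation, which is the heart of the matter.} The inclusion $\mathrm{Stab}_{(y^2A,yV)}(W)\supseteq(y^2A,V_0)$ is clear, so fix $(b,v)\in(y^2A,yV)$ and analyze when it fixes $W$. By (\ref{conj}) the condition is $\mu\big(h(v,u_0)-h(u_0,v)\big)=1$ for all $u_0\in V_0$. For any $a\in A$ one has $a-a^{*}=d(a)\,y$, whence $\mu(a-a^{*})=\l\big(2d(a)\big)$; specializing $a=h(v,u_0)$ and recalling $d\big(h(v,u_0)\big)=f(v,u_0)$, the condition becomes $\l\big(2f(v,V_0)\big)=\{1\}$, i.e. (by primitivity of $\l$, since $f(v,V_0)=d\big(y^{\ell}h(v,V)\big)$ is an ideal of $R$) simply $f(v,V_0)=0$. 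Now write $v\in y^{k}V\setminus y^{k+1}V$ for some $k\ge1$ (if $v=0$ then $v\in V_0$ already). Lemma \ref{qz} supplies an orthogonal basis of $V$ with unit lengths, and expanding $v$ in it shows $h(v,V)=\r^{k}$; therefore $f(v,V_0)=d(\r^{k+\ell})$. A short computation, using $y^{2}=x$ to reduce powers of $y$, shows $d(\r^{j})=\m^{\lfloor j/2\rfloor}$ for $0\le j<2\ell$ and $d(\r^{j})=0$ for $j\ge2\ell$. Since $\m^{\ell-1}\ne0$, it follows that $f(v,V_0)=0$ if and only if $k+\ell\ge2\ell$, that is, if and only if $v\in y^{\ell}V=V_0$. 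Hence $\mathrm{Stab}_{(y^2A,yV)}(W)=(y^2A,V_0)$, so $Bot\cong\mathrm{ind}_{(y^2A,V_0)}^{(y^2A,yV)}W$ is irreducible of dimension $q^{(\ell-1)m}$, which finishes the proof. The main obstacle is this last step: one needs Lemma \ref{qz} to control the ideal $h(v,V)$ for a \emph{non-primitive} vector $v$, and the explicit behaviour of $d$ on powers of $\r$ to convert the character identity into the arithmetic inequality $k\ge\ell$.
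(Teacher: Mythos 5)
Your proof is correct, but it takes a genuinely different route from the paper's. The paper works ``from above'': it considers the normal abelian subgroup $(A,y^{2\ell-1}V)$ of $H$ with the linear character $\rho(a,v)=\mu(a)$, shows by primitivity of $\mu$ that the stabilizer of $\rho$ in $H$ is $(A,yV)$, and invokes Clifford theory inside $H$ to conclude that the $\rho$-homogeneous component $X(y^{2\ell-1})$ is an irreducible $(A,yV)$-module with $X\cong\mathrm{ind}_{(A,yV)}^H X(y^{2\ell-1})$; it then identifies $X(y^{2\ell-1})$ with $Bot$ by Lemma~\ref{b1} together with a dimension count, and passes from $(A,yV)$ to $(y^2A,yV)$ by observing that the central factor $(A,0)$ acts by scalars. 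You instead work ``from below'': transitivity of induction along $(A,V_0)\le(A,yV)\le H$ immediately gives $X\cong\mathrm{ind}_{(A,yV)}^H Bot$ without any dimension count, and irreducibility over $(y^2A,yV)$ is then proved directly by applying Clifford theory inside the much smaller group, with the normal abelian subgroup $(y^2A,V_0)$ and the explicit arithmetic of $d$ on the ideals $\r^j$, plus Lemma~\ref{qz}, to pin down the stabilizer of the restricted character. Both routes rest on a stabilizer computation that ultimately exploits non-degeneracy via the orthogonal basis; the paper's single computation in $H$ handles the induction isomorphism and irreducibility in one stroke at the cost of a final dimension-matching step, whereas your argument decouples the two and replaces the dimension count by an explicit control of $h(v,V)$ and $d(\r^{k+\ell})$, which makes the mechanism of irreducibility more visible but requires a longer verification.
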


\noindent{\it Proof.} Let $X(y^{2\ell-1})$ be the fixed points of
$(0,y^{2\ell-1}V)$ in $X$. Consider the normal subgroup
$(A,y^{2\ell-1}V)$ of $H$ and its linear character
$\rho:(A,y^{2\ell-1}V)\to\C^{\times}$, given by $\rho(a,v)=\mu(a)$. Using
$d(R)=0$ and the primitivity of $\mu$ we see that the stabilizer of $\rho$ in $H$ is $(A,y V)$.
By definition
$$
X(y^{2\ell-1})=\{z\in X\,|\, (a,v)z=\rho(a,v)z=\mu(a)z\text{ for all }a\in A,
v\in y^{2\ell-1}V\}.
$$
By Lemma \ref{b1} we know that $Bot\subseteq X(y^{2\ell-1})$.
Conclusion: $\rho$ enters the restriction of $X$ to
$(A,y^{2\ell-1}V)$; the $\rho$-homogeneous component for
$(A,y^{2\ell-1}V)$ in $X$ is $X(y^{2\ell-1})$, which is an
irreducible $(A,y V)$-module, and $X\cong ind_{(A,y V)}^H
X(y^{2\ell-1})$. Since $(A,0)$ is central in $H$, it follows that
$X(y^{2\ell-1})$ is irreducible as a module for $(y^2A, y V)$.
Counting dimensions
$$
q^{\ell m}=\dim\, X=[H:(A,y V)]\times \dim\,
X(y^{2\ell-1})=q^m\times \dim\, X(y^{2\ell-1}),
$$
so
$$
\dim\, X(y^{2\ell-1})=q^{(\ell-1)m}=\dim\, Bot,
$$
whence
$$
X(y^{2\ell-1})=Bot.  \hfill \qed
$$

Suppose that $\ell>1$. Let $\overline{R}=R/x^{\ell-1} R$ and
$\overline{A}=A/y^{2(\ell-1)} A$. Then $\overline{A}/\overline{R}$
is a ramified quadratic extension and $\overline{A}$ inherits an
involution from $A$ whose fixed points form $\overline{R}$. Let
$\overline{V}=V/y^{2(\ell-1)}V$. Then $h$ gives rise to a
non-degenerate hermitian form $\overline{h}:\overline{V}\times
\overline{V}\to \overline{A}$, given by
$$
\overline{h}(v+y^{2(\ell-1)}V,
w+y^{2(\ell-1)}V)=h(v,w)+y^{2(\ell-1)}A,\quad v,w\in V.
$$
Let $\overline{U}$ and $\overline{H}$ be the associated unitary and Heisenberg groups.

Consider the map $(y^2A, y V)\to \overline{H}$ given by
\begin{equation}
\label{yus} (y^2 a,yv)\mapsto (-a+y^{2(\ell-1)}A,
v+y^{2(\ell-1)}V).
\end{equation}
It is a well-defined group epimorphism with kernel
$(0,y^{2\ell-1}V)$. The corresponding isomorphism $(y^2A, y V)/(0,y^{2\ell-1}V)\to \overline{H}$
is compatible with the actions of $U/U(y^{2(\ell-1)})$ on $(y^2A, y V)/(0,y^{2\ell-1}V)$ and $\overline{H}$.

As $(0,y^{2\ell-1}V)$ acts trivially on
$Bot$, we see that $Bot$ is an irreducible module for the quotient group $(y^2A,
y V)/(0,y^{2\ell-1}V)$, and hence for $\overline{H}$ via
(\ref{yus}). Let $\overline{\mu}:\overline{A}^+\to\C^{\times}$ be the
linear character defined by
$$
\overline{\mu}(a+y^{2(\ell-1)}A)=\mu(-y^2a),\quad a\in A.
$$
Then $\overline{\mu}$ is primitive. It follows from Proposition
\ref{b2} and Corollary \ref{corsch} that the representation of
$\overline{H}$ afforded by $Bot$ via (\ref{yus}) is
Schr$\mathrm{\ddot{o}}$dinger of type $\overline{\mu}$.

We now remove the condition that $\ell>1$. For $j\geq 0$ we
consider the normal subgroup $U(y^j)$ of $U$ defined as follows:
$$
U(y^j)=\{g\in U\,|\, gv\equiv v\mod y^j V\text{ for all }v\in V\}.
$$

\begin{thm}
\label{notrivial}
 (a) The congruence subgroup $U(y^{2(\ell-1)})$ acts
trivially on $Bot$.

(b) The congruence subgroup $U(y^{2(\ell-1)})$ does not act trivially
on any of the irreducible constituents of $Top$.
\end{thm}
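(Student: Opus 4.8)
The plan is to treat (a) by a direct computation on the basis $\{e_v\}$ of $Bot$, and (b) by producing, for each irreducible constituent of $Top$, an explicit element of $U(y^{2(\ell-1)})$ that acts on it as a nontrivial scalar on a weight vector.

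For (a) I would fix $g\in U(y^{2(\ell-1)})$ and a basis vector $e_v$ of $Bot$, so that $v\in T$ is non-primitive, say $v=yw$ with $w\in V$. Then $gv=y(gw)=v+y^{2\ell-1}u$ for some $u\in V$, and since $2\ell-1\ge\ell$ this forces $gv\equiv v\pmod{V_0}$; hence by (\ref{actu}) $ge_v=\mu(h(gv,v))e_v$. Now $h(gv,v)-h(v,v)=(y^{2\ell-1})^*h(u,v)$ lies in $y^{2\ell}A=0$ because $h(u,v)=h(u,yw)\in yA$, so $h(gv,v)=h(v,v)\in R$; as $\mu$ is trivial on $R$ (since $d(R)=0$) we get $ge_v=e_v$, which proves (a).

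For (b) I would use Theorem \ref{m3}(a): every irreducible constituent of $Top$ is isomorphic to some $Top(\phi,v_0)$ with $v_0\in S$ primitive and $\phi\in G$, and $Top(\phi,v_0)$ contains the nonzero vector $E_{\phi,v_0}$ of Lemma \ref{zac}. Since $U(\i)\subseteq C(v_0)$ and $\beta_{v_0}$ restricts to $\alpha_{v_0}$ on $U(\i)$, each $g\in U(\i)$ acts on $\C E_{\phi,v_0}$ through the scalar $\alpha_{v_0}(g)$, so it is enough to exhibit $g\in U(\i)\cap U(y^{2(\ell-1)})$ with $\alpha_{v_0}(g)\ne1$. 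I would choose $z\in V$ with $h(z,v_0)=1$ (Lemma \ref{dual}) and, for a parameter $c\in R$, set $g=\rho_{cy^{2\ell-1},z,z}$. As $cy^{2\ell-1}\in\i$, Lemma \ref{genex} gives $g\in U(\i)$; and since $(cy^{2\ell-1})^*=-cy^{2\ell-1}$ one has $g(w)=w+2cy^{2\ell-1}h(z,w)z$ for all $w\in V$, so $g\in U(y^{2\ell-1})\subseteq U(y^{2(\ell-1)})$. The computation already made for Proposition \ref{hom6} then gives
$$
\alpha_{v_0}(g)=\mu\big(h(gv_0,v_0)\big)=\mu(-2cy^{2\ell-1})=\lambda\big(d(-2cx^{\ell-1}y)\big)=\lambda(-4cx^{\ell-1}),
$$
and as $c$ runs over $R$ the element $-4cx^{\ell-1}$ runs over the minimal nonzero ideal $\m^{\ell-1}=Rx^{\ell-1}$, on which the primitive character $\lambda$ is nontrivial; choosing $c$ suitably yields $\alpha_{v_0}(g)\ne1$, so $U(y^{2(\ell-1)})$ does not fix $Top(\phi,v_0)$.

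The only point that genuinely needs care is the choice of displacement element in (b): it must lie simultaneously in the deep congruence subgroup $U(y^{2(\ell-1)})$ and in $U(\i)$, and yet still be separated by $\mu$. This is possible precisely because $y^{2\ell-1}A$ is the minimal nonzero ideal of $A$ — so the exponent $2\ell-1$ is at once $\ge\ell$ and $\ge2(\ell-1)$ — while $d$ maps $y^{2\ell-1}A$ onto the minimal nonzero ideal $\m^{\ell-1}$ of $R$, on which $\lambda$ does not vanish; were $d$ to annihilate $y^{2\ell-1}A$ the argument would collapse. The remaining verifications, namely that $\rho_{a,z,z}\in U$ and the exact value of $h(\rho_{a,z,z}(v_0),v_0)$, are routine and are already contained in Lemmas \ref{genex}, \ref{dual} and the proof of Proposition \ref{hom6}.
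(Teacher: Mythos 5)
Your proof of (a) is correct and is essentially the paper's computation, carried out uniformly in $\ell$ rather than splitting off the $\ell=1$ case; the only cosmetic difference is that you argue $h(y^{2\ell-1}u,v)=0$ via $h(u,v)\in yA$, where the paper instead factors out $-x$ and notes $xy^{2(\ell-1)}=0$.

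Your proof of (b) is also correct, and here the route differs more substantively. The paper argues by contradiction: assuming $U(y^{2(\ell-1)})$ acts trivially on some constituent, it reruns the computation from Proposition~\ref{hom6} with $a$ ranging over the full ideal $y^{2(\ell-1)}A$, uses primitivity of $\mu$ to conclude $h(v,z_1)h(z_2,v)\in y^2A$ for all $z_1,z_2$, and then contradicts this by a choice of $z_1=z_2$; it treats $\ell=1$ separately. You instead produce, for each constituent $Top(\phi,v_0)$, a single explicit witness $g=\rho_{cy^{2\ell-1},z,z}\in U(\i)\cap U(y^{2(\ell-1)})$ acting on the nonzero weight vector $E_{\phi,v_0}$ by the scalar $\alpha_{v_0}(g)=\lambda(-4cx^{\ell-1})$, which is nontrivial for suitable $c$ by primitivity of $\lambda$; this is uniform in $\ell$. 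The two arguments use the same underlying mechanism (the $\rho$-elements of Lemma~\ref{genex}, the computation of Proposition~\ref{hom6}, primitivity of $\mu$ or $\lambda$), but yours is constructive rather than by contradiction and replaces the quantification over all $a\in y^{2(\ell-1)}A$ and all $z_1,z_2$ with a single well-chosen displacement confined to the minimal ideal $y^{2\ell-1}A$. This has the advantage of showing exactly which one-parameter family of elements of the congruence subgroup detects the nontriviality, and of treating $\ell=1$ and $\ell>1$ on the same footing. Your final verification that $d$ carries $y^{2\ell-1}A$ onto $\m^{\ell-1}$ is exactly the point that makes the argument go through, and you flag it correctly.
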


\noindent{\it Proof.} (a) If $\ell=1$ then $Bot=\C e_0$ and
(\ref{actu}) shows that $U$ acts trivially on $Bot$. Suppose
$\ell>1$. Then $2(\ell-1)\geq \ell$, so $U(y^{2(\ell-1)})\subseteq
U(y^\ell)$. Let $yv\in T\cap y V$. Then $U(y^{2(\ell-1)})$ acts on
the basis vector $e_{yv}$ of $Bot$ via the linear character
$g\mapsto \mu(h(gyv,yv))$. By assumption
$$
gv=v+y^{2(\ell-1)}u\text{ for some }u\in V,
$$
so
$$
\mu(h(gyv,yv))=\mu(-xh(gv,v))=\mu(-xh(v,v))\mu(-xh(y^{2(\ell-1)}u,v))=1.
$$
(b) Suppose first that $\ell>1$. Then $U(y^{2(\ell-1)})\subseteq
U(y^\ell)$. The irreducible constituents of $Top$ restricted to
$U(y^{2(\ell-1)})$ are the linear characters $g\mapsto
\mu(h(gv,v))$ for $v$ primitive. Suppose one of these is trivial.
Reasoning as in the proof of Proposition \ref{hom6} (with $a\in
y^{2(\ell-1)}A$) we see that
$$
h(v,z_1)h(z_2,v)\in y^2A,\quad z_1,z_2\in V.
$$
Choosing $z_1=z_2$ so that $h(v,z_1)=1=h(z_2,v)$ we reach a
contradiction.

Suppose next $\ell=1$. Then $U(y^{2(\ell-1)})=U$, which does not
act trivially on any irreducible constituent of $Top$, as
$U(y^\ell)=U(y)$ itself does not. Indeed, it acts through a linear
character that is not trivial by the same argument just made
above.\qed

\begin{cor} $Bot$ coincides with the fixed points of
$U(y^{2(\ell-1)})$ in $X$.
\end{cor}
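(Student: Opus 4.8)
The plan is to read the corollary off directly from Theorem~\ref{notrivial} together with the decomposition $X=Top\oplus Bot$ established in \S\ref{sectop}. Write $Y$ for the subspace of fixed points of the congruence subgroup $U(y^{2(\ell-1)})$ acting on $X$. Since $U(y^{2(\ell-1)})$ is a \emph{normal} subgroup of $U$, the subspace $Y$ is itself a $U$-submodule of $X$. Part (a) of Theorem~\ref{notrivial} gives the inclusion $Bot\subseteq Y$, so it remains only to prove the reverse inclusion $Y\subseteq Bot$.

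For that, I would use the $U$-module projection $\pi\colon X\to Top$ along the decomposition $X=Top\oplus Bot$; this map is $U$-equivariant precisely because both $Top$ and $Bot$ are $U$-submodules. Then $\pi(Y)$ is a $U$-submodule of $Top$ on which $U(y^{2(\ell-1)})$ acts trivially, since it acts trivially on $Y$ and $\pi$ commutes with the action of $U$. If $\pi(Y)$ were nonzero, then by complete reducibility of the $\C$-representations of the finite group $U$ it would contain an irreducible submodule, necessarily one of the constituents $Top(\phi,s)$ listed in Theorem~\ref{m3}(a); but Theorem~\ref{notrivial}(b) asserts that $U(y^{2(\ell-1)})$ acts nontrivially on every irreducible constituent of $Top$, a contradiction. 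Hence $\pi(Y)=0$, that is $Y\subseteq\ker\pi=Bot$, and together with $Bot\subseteq Y$ this gives $Y=Bot$, as required.

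I do not expect any genuine obstacle here: the substance has already been isolated into the two parts of Theorem~\ref{notrivial}, and the only points deserving a word of care are that the fixed-point space of a normal subgroup is $U$-stable and that the projection onto one summand of a $U$-invariant direct sum decomposition is a morphism of $U$-modules. One could equally phrase the last step without $\pi$ by observing that, since $Bot\subseteq Y$, writing $y=t+b$ with $t\in Top$, $b\in Bot$ forces $t=y-b\in Y$, so $Y=(Y\cap Top)\oplus Bot$, and then $Y\cap Top$ must vanish by Theorem~\ref{notrivial}(b); the projection formulation is just a slightly cleaner repackaging of the same observation.
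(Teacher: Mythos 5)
Your proof is correct and follows the route the paper intends: the corollary is stated without proof immediately after Theorem~\ref{notrivial}, and your argument is precisely the intended one, combining parts (a) and (b) of that theorem with the $U$-module decomposition $X = Top \oplus Bot$ and complete reducibility. Both your projection version and the more hands-on version in the final sentence are valid ways of writing out the same observation.
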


\begin{thm} The natural map $U\to \overline{U}$, given by
$g\mapsto \overline{g}$, where
$\overline{g}(v+y^{2(\ell-1)}V)=g(v)+y^{2(\ell-1)}V$, is a group
epimorphism with kernel $U(y^{2(\ell-1)})$.
\end{thm}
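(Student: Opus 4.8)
Most of the statement is formal and I would dispatch it quickly, reserving the real effort for surjectivity. Each $g\in U$ is $A$-linear, hence carries $y^{2(\ell-1)}V$ into itself, so $\overline g$ is a well-defined $\overline A$-linear endomorphism of $\overline V$; the same applied to $g^{-1}$ shows $\overline g\in\GL_{\overline A}(\overline V)$, and $\overline g$ preserves $\overline h$ because $\overline h$ was \emph{defined} through $h$. Thus $g\mapsto\overline g$ is a group homomorphism $U\to\overline U$, and its kernel is exactly $\{g\in U\mid g(v)\equiv v\bmod y^{2(\ell-1)}V\ \text{for all}\ v\in V\}=U(y^{2(\ell-1)})$ by the definition of the congruence subgroup. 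If $\ell=1$ then $\overline V=0$ and surjectivity is vacuous, so I assume $\ell\geq 2$, recording for later use that $y^{2(\ell-1)}V=x^{\ell-1}V$ and that $x^{2(\ell-1)}=0$ in $R$ (the latter since $2(\ell-1)\geq\ell$).

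For surjectivity I would fix an orthogonal basis $v_1,\dots,v_m$ of $V$ with $r_i:=h(v_i,v_i)\in R^{\times}$, as furnished by Lemma \ref{qz}, and write $\overline v_i$ for the image of $v_i$ in $\overline V$. Given $\overline g\in\overline U$, the vectors $\overline w_i:=\overline g(\overline v_i)$ form an orthogonal basis of $\overline V$ whose self-values $\overline h(\overline w_i,\overline w_i)$ are the images of the $r_i$. The heart of the argument is to lift this basis: I claim one can find an orthogonal basis $w_1,\dots,w_m$ of $V$ with $h(w_i,w_i)=r_i$ and with each $w_i$ reducing to $\overline w_i$ in $\overline V$. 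Granting this, let $g\in\GL_A(V)$ be the $A$-linear map determined by $g(v_i)=w_i$; then $h(gv_i,gv_j)=h(w_i,w_j)=\delta_{ij}r_i=h(v_i,v_j)$, so $g$ preserves $h$ and lies in $U$, while $\overline g$ and the given element of $\overline U$ agree on the basis $\overline v_1,\dots,\overline v_m$ and hence coincide.

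I would prove the lifting claim by induction on $m$, and I expect its base case $m=1$ to be the main obstacle. There, given any non-degenerate hermitian $A$-space $W$, a primitive vector $\overline w$ of $W/x^{\ell-1}W$ with unit self-value, and a unit $r\in R^{\times}$ reducing to that self-value, I would choose a primitive lift $w'\in W$ of $\overline w$, observe that $h(w',w')\in R$ (as $h$ is hermitian) and hence $h(w',w')-r=x^{\ell-1}t$ for some $t\in R$ (using $x^{\ell-1}A\cap R=x^{\ell-1}R$), use Lemma \ref{dual} to pick $z\in W$ with $h(w',z)=-t/2$, and set $w:=w'+x^{\ell-1}z$. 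This $w$ still reduces to $\overline w$, and, because $x^{2(\ell-1)}=0$ kills the $h(z,z)$-term,
$$
h(w,w)=h(w',w')+x^{\ell-1}\!\left(h(w',z)+h(z,w')\right)=h(w',w')+2x^{\ell-1}h(w',z)=r,
$$
using $h(z,w')=h(w',z)^{*}=-t/2$. For the inductive step I would first lift $\overline w_1$ to $w_1\in V$ with $h(w_1,w_1)=r_1$ by the case just treated; since $h(w_1,w_1)$ is a unit, $V=Aw_1\perp w_1^{\perp}$ with $w_1^{\perp}$ a non-degenerate hermitian $A$-module of rank $m-1$, and reducing this decomposition identifies $\overline{w_1^{\perp}}$ with the orthogonal complement of $\overline A\,\overline w_1$ in $\overline V$, which contains $\overline w_2,\dots,\overline w_m$. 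Applying the inductive hypothesis inside $w_1^{\perp}$ then yields $w_2,\dots,w_m$, completing the lift and the proof. Apart from the perturbation computation of the case $m=1$, everything reduces to routine bookkeeping with direct summands over the local rings $R$ and $A$.
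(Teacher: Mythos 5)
The paper proves the homomorphism-and-kernel part by inspection and cites \cite{15}, Theorem 5.2 (also \cite{8}, \S 4) for surjectivity rather than proving it, so you are supplying a self-contained argument the authors chose to outsource. Your outline — diagonalize $h$ via Lemma \ref{qz}, transport the diagonalizing basis by $\overline g$, lift it one vector at a time, and use the perturbation $w=w'+x^{\ell-1}z$, whose effect on $h(w',w')$ is exactly $2x^{\ell-1}h(w',z)$ because $x^{2(\ell-1)}=0$ for $\ell\geq 2$ — is correct, and once the lifted orthogonal basis has the prescribed self-values $r_i$ the map $v_i\mapsto w_i$ is manifestly an isometry reducing to $\overline g$. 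This is presumably close in spirit to the cited proofs; the paper gains nothing in self-containment by the citation, and you lose nothing by proving it.

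One point in the inductive step needs tightening. As you state the lifting claim, it presupposes that $V$ comes with an orthogonal basis having self-values $r_1,\dots,r_m$, and after peeling off $w_1$ you ``apply the inductive hypothesis inside $w_1^\perp$.'' But that hypothesis, applied to $w_1^\perp$, requires $w_1^\perp$ to admit an orthogonal basis with self-values exactly $r_2,\dots,r_m$, which you have not checked. It is true — from $V=Aw_1\perp w_1^\perp$ with $h(w_1,w_1)=r_1$ the determinant of $w_1^\perp$ is $r_2\cdots r_m$ modulo $R^{\times 2}$, so Lemma \ref{qz} and Theorem \ref{xtipo} furnish such a basis — but as written you are invoking an inductive hypothesis on data you haven't exhibited. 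A cleaner repair is to drop the unused hypothesis: state the lemma as \emph{``for any non-degenerate hermitian $A$-module $W$ of rank $n$, any orthogonal $\overline A$-basis $\overline w_1,\dots,\overline w_n$ of $\overline W$ with unit self-values, and any units $r_i\in R^{\times}$ lifting those self-values, there is an orthogonal $A$-basis $w_1,\dots,w_n$ of $W$ lifting the $\overline w_i$ with $h(w_i,w_i)=r_i$.''} Your single-vector computation already proves this for $n=1$ with no constraint relating $r$ to $W$ beyond its reduction, the induction then runs with no side condition, and surjectivity is the special case $r_i=h(v_i,v_i)$. With that adjustment the argument is complete.
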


\noindent{\it Proof.} It is clear that this is a group
homomorphism with kernel $U(y^{2(\ell-1)})$. The fact that it is
surjective follows from \cite{15}, Theorem 5.2.  (This is also shown in \S 4 of
\cite{8}.) \qed

\begin{thm}
\label{cand} The representation of $\overline{U}$ on $Bot$ obtained
via the isomorphism $U/U(y^{2(\ell-1)})\to\overline{U}$ is a Weil
representation of primitive type $\overline{\mu}$.
\end{thm}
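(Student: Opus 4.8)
The plan is to derive the Weil compatibility relation for $\overline{U}$ acting on $Bot$ by restricting the analogous relation for $U$ acting on $X$ to the subspace $Bot$ and to the subgroup $(y^2A,yV)$ of $H$, and then pushing the outcome through the quotient $U\to\overline{U}$ and the epimorphism (\ref{yus}). Recall from \S\ref{con} that the action of $U$ on $X$ is a Weil representation $W$ of type $\mu$, compatible with the Schr\"{o}dinger representation $S=res_H X$ of $H=H(h)$ in the sense that
$$
W(g)S(k)W(g)^{-1}=S({}^g k),\qquad g\in U,\ k\in H.
$$
This is automatic, because $X$ is a module for $H\rtimes U$, in which ${}^g k=gkg^{-1}$.

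First I would record three descent statements. (i) As $Bot$ is a $U$-submodule of $X$ on which the congruence subgroup $U(y^{2(\ell-1)})$ acts trivially by Theorem~\ref{notrivial}(a), the operators $W(g)|_{Bot}$ depend only on the image $\overline{g}$ of $g$ under the epimorphism $U\to\overline{U}$ (whose kernel is precisely $U(y^{2(\ell-1)})$), so they assemble into a representation $\overline{W}:\overline{U}\to\GL(Bot)$. (ii) The subgroup $(y^2A,yV)$ is contained in $(A,yV)$, and $(A,yV)$ stabilizes the homogeneous component $Bot=X(y^{2\ell-1})$ (proof of Proposition~\ref{b2}), so $S(k)$ preserves $Bot$ for each $k\in(y^2A,yV)$; moreover the kernel $(0,y^{2\ell-1}V)$ of the map (\ref{yus}) acts trivially on $Bot$ by Lemma~\ref{b1}, so $S(k)|_{Bot}$ depends only on the image $\overline{k}\in\overline{H}$ of $k$, and the resulting map $\overline{S}:\overline{H}\to\GL(Bot)$ is exactly the representation of $\overline{H}$ on $Bot$ afforded via (\ref{yus}), which was shown just before Theorem~\ref{notrivial} to be Schr\"{o}dinger of type $\overline{\mu}$. (iii) Each $g\in U$ is $A$-linear, so $g(yV)=yV$ and hence $(y^2A,yV)$ is normalized by $U$; thus ${}^g k\in(y^2A,yV)$ when $k\in(y^2A,yV)$, and by the compatibility of (\ref{yus}) with the $U$-actions recorded right after that display, the image of ${}^g k$ in $\overline{H}$ equals ${}^{\overline{g}}\,\overline{k}$.

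With (i)--(iii) in place, the conclusion is immediate: restricting the displayed Weil identity to $Bot$, for $g\in U$ and $k\in(y^2A,yV)$, yields $\overline{W}(\overline{g})\,\overline{S}(\overline{k})\,\overline{W}(\overline{g})^{-1}=\overline{S}({}^{\overline{g}}\,\overline{k})$, and as $g$ ranges over $U$ and $k$ over $(y^2A,yV)$ the images $\overline{g}$, $\overline{k}$ range over all of $\overline{U}$ and all of $\overline{H}$ respectively (the latter because (\ref{yus}) is surjective), so
$$
\overline{W}(\overline{g})\,\overline{S}(\overline{h})\,\overline{W}(\overline{g})^{-1}=\overline{S}({}^{\overline{g}}\,\overline{h}),\qquad \overline{g}\in\overline{U},\ \overline{h}\in\overline{H}.
$$
Since $\overline{S}$ is Schr\"{o}dinger of type $\overline{\mu}$ and $\overline{\mu}$ is primitive, this is precisely the assertion that $\overline{W}$ is a Weil representation of $\overline{U}$ of primitive type $\overline{\mu}$. (When $\ell=1$ the statement is degenerate: $\overline{U}$ is trivial and $Bot=\C e_0$ is the trivial module, which vacuously affords the Weil representation of primitive type $\overline{\mu}$.)

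I do not anticipate a real obstacle, as the substance has already been absorbed into Proposition~\ref{b2}, Lemma~\ref{b1}, and Theorem~\ref{notrivial}; the remaining work is bookkeeping. The point that most deserves care is checking that all three ingredients genuinely descend through the correct quotient --- $\overline{W}$ through $U\to\overline{U}$, $\overline{S}$ through (\ref{yus}), and the conjugation action through both simultaneously --- and, in particular, that the $\overline{S}$ manufactured here by restriction-and-descent literally coincides with the Schr\"{o}dinger representation of type $\overline{\mu}$ fixed earlier, rather than merely being equivalent to it; this holds because in both descriptions $\overline{S}$ is, by definition, the action of $\overline{H}$ on $Bot$ transported along (\ref{yus}).
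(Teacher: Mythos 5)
Your argument is correct and follows the same strategy as the paper's proof: restrict the Weil compatibility identity $W(g)S(k)W(g)^{-1}=S({}^g k)$ to $Bot$ and to $(y^2A,yV)$, then descend through the two quotients, using that $U(y^{2(\ell-1)})$ and $(0,y^{2\ell-1}V)$ act trivially on $Bot$ (Theorem~\ref{notrivial}(a) and Lemma~\ref{b1}) and that (\ref{yus}) is a $U$-equivariant epimorphism. The paper states this in two brief sentences; your write-up simply makes the three descent steps and the surjectivity/conjugation bookkeeping explicit, and adds the degenerate $\ell=1$ observation, but the substance is identical.
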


\noindent{\it Proof.} Let $S:H\to\GL(X)$ and $W:U\to\GL(X)$ be the Schr$\mathrm{\ddot{o}}$dinger and Weil representations of type $\mu$,
as constructed in \S\ref{con}. Then
$$
W(g)S(k)W(g)=S({}^g k),\quad g\in U,k\in H.
$$
First restrict $k$ to $(y^2A,yV)$ and all above operators to $Bot$. Then, using the fact that $U(y^{2(\ell-1)})$ and $(0,y^{2\ell-1})$
act trivially on $Bot$, pass to the corresponding representations of $\overline{U}$ and $\overline{H}$, to obtain the desired result.\qed

\section{Irreducible constituents of the Weil module $X$}
\label{secfull}

Let $\overline{R},\overline{A},\overline{V},\overline{h}$ and
$\overline{U}$ be defined as in \S\ref{secbot}.

\begin{note} The irreducible constituents of the Weil module $X$ are those
of $Top$, as described in Theorem \ref{m3}, plus those of $Bot$.
Moreover, either $\ell=1$, in which case $Bot$ is the trivial
$U$-module, or $\ell>1$ and $Bot$ is Weil module for
$\overline{U}$, in which case its irreducible constituents can be
recursively obtained by repeating this process.
\end{note}

\begin{thm}
\label{mulfree} The Weil module $X$ of $U$ is multiplicity free.
\end{thm}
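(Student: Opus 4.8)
The plan is to prove that $X$ is multiplicity free by induction on $\ell$, the nilpotency degree of $\m$, using the decomposition $X = Top \oplus Bot$ established in \S\ref{sectop} and the recursive structure of $Bot$ described in \S\ref{secbot}.

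\medskip

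\noindent\textbf{Base case.} If $\ell = 1$, then $Bot = \C e_0$ is the trivial $U$-module by Theorem \ref{notrivial}(a), while $Top$ is multiplicity free by Theorem \ref{m3}(a). It remains to check that the trivial module does not occur among the irreducible constituents of $Top$. This follows from Theorem \ref{notrivial}(b): when $\ell = 1$ the congruence subgroup $U(y^{2(\ell-1)}) = U$ acts nontrivially on every irreducible constituent of $Top$, so none of them can be trivial. Hence $X = Top \oplus \C e_0$ is multiplicity free.

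\medskip

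\noindent\textbf{Inductive step.} Suppose $\ell > 1$ and the theorem holds for all smaller values of the nilpotency degree. By Theorem \ref{cand}, $Bot$ is (via the isomorphism $U/U(y^{2(\ell-1)}) \to \overline{U}$) a Weil module of primitive type $\overline{\mu}$ for the unitary group $\overline{U}$ of $\overline{V}$ over the ramified quadratic extension $\overline{A}/\overline{R}$, where the nilpotency degree of the maximal ideal of $\overline{R}$ is $\ell - 1$. By the induction hypothesis, $Bot$ is multiplicity free as an $\overline{U}$-module, hence also as a $U$-module. Combined with Theorem \ref{m3}(a), each of $Top$ and $Bot$ is individually multiplicity free, so it suffices to show that $Top$ and $Bot$ share no isomorphic irreducible constituent. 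This is precisely where Theorem \ref{notrivial} does the work: $U(y^{2(\ell-1)})$ acts trivially on $Bot$ (part (a)) but acts nontrivially on every irreducible constituent of $Top$ (part (b)). Since an irreducible module common to both would have to be simultaneously trivial and nontrivial on the normal subgroup $U(y^{2(\ell-1)})$, no such module exists. Therefore $X = Top \oplus Bot$ is multiplicity free, completing the induction.

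\medskip

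\noindent\textbf{Main obstacle.} All the heavy lifting has already been done: the nontriviality statement Theorem \ref{notrivial}(b) is the crucial separation property, and the identification of $Bot$ as a genuine Weil module of strictly smaller type (Theorem \ref{cand}) is what makes the induction legitimate. The only point requiring any care is bookkeeping: one must confirm that the recursion is well-founded, i.e. that $Bot$'s defining data $(\overline{R}, \overline{A}, \overline{V}, \overline{h})$ again satisfy all the standing hypotheses of the ramified case (which \S\ref{secbot} verifies) and that the nilpotency degree genuinely drops from $\ell$ to $\ell - 1$, so the induction terminates at the already-settled base case $\ell = 1$.
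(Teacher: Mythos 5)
Your proof is correct and follows essentially the same route as the paper: use Theorem \ref{m3} for multiplicity-freeness of $Top$, Theorem \ref{notrivial} to separate $Top$ from $Bot$, and the identification of $Bot$ as trivial (when $\ell=1$) or as a Weil module for $\overline{U}$ at nilpotency degree $\ell-1$ (when $\ell>1$) to close the induction. You merely spell out the induction structure that the paper compresses into a single sentence.
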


\noindent{\it Proof.} Theorem \ref{notrivial} shows that $Bot$ and
$Top$ are disjoint, while $Top$ is multiplicity free by Theorem
\ref{m3}. As seen in \S\ref{secbot}, $Bot$ is the trivial
$U$-module when $\ell=1$ and affords a Weil representation for
$\overline{U}$ when $\ell>1$, so the result follows by
induction.\qed

\begin{thm}
\label{orbred} If $\ell>1$ the number of orbits of $\overline{U}$ acting on
$\overline{V}$ is equal to the number of orbits of $U$ acting on
$y^2V$.
\end{thm}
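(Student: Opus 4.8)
The plan is to produce an explicit $U$-equivariant bijection $\overline{V}\to y^2V$, which immediately reduces the orbit count for $\overline{U}$ on $\overline{V}$ to the orbit count for $U$ on $y^2V$.

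First I would record that the annihilator of $y^2$ in $V$ is exactly $y^{2(\ell-1)}V$. The inclusion $y^{2\ell-2}V\subseteq\mathrm{ann}_V(y^2)$ is clear since $y^{2\ell}=0$; for the reverse inclusion one writes a vector in a basis of the free module $V$ and uses that $A$ is a chain ring with $\mathrm{ann}_A(y^2)=y^{2\ell-2}A$. Hence
$$
\Phi\colon\overline{V}=V/y^{2(\ell-1)}V\longrightarrow y^2V,\qquad v+y^{2(\ell-1)}V\mapsto y^2v,
$$
is a well-defined isomorphism of $A$-modules.

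Next I would verify that $\Phi$ intertwines the relevant actions. By the description of $\overline{U}$ in \S\ref{secbot}, the kernel of the epimorphism $U\to\overline{U}$ is $U(y^{2(\ell-1)})$, so $\overline{U}$ acts on $\overline{V}$ by $\overline{g}(v+y^{2(\ell-1)}V)=gv+y^{2(\ell-1)}V$, while $U$ acts on the submodule $y^2V\subseteq V$ by restriction of its action on $V$. For $g\in U$,
$$
\Phi\bigl(\overline{g}(v+y^{2(\ell-1)}V)\bigr)=y^2(gv)=g(y^2v)=g\,\Phi(v+y^{2(\ell-1)}V),
$$
so $\Phi$ carries the $U$-action on $\overline{V}$ (which factors through $\overline{U}$) to the $U$-action on $y^2V$. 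Since $U\to\overline{U}$ is surjective, the $\overline{U}$-orbits on $\overline{V}$ coincide as subsets with the $U$-orbits on $\overline{V}$, and transporting them along the bijection $\Phi$ identifies them with the $U$-orbits on $y^2V$; comparing cardinalities yields the claim.

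The only delicate point is the identity $\mathrm{ann}_V(y^2)=y^{2(\ell-1)}V$, which is what makes $\Phi$ simultaneously well-defined and injective; once this is settled, the equivariance check and the orbit bookkeeping are routine, and there is no further obstacle.
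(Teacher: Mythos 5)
Your proof is correct and follows essentially the same approach as the paper: it is the same $U$-equivariant bijection between $\overline{V}$ and $y^2V$ (the paper writes it as $y^2v\mapsto v+y^{2(\ell-1)}V$, you write the inverse), combined with the observation that the $\overline{U}$- and $U$-orbits on $\overline{V}$ coincide since $U\to\overline{U}$ is onto. You merely spell out the annihilator computation $\mathrm{ann}_V(y^2)=y^{2(\ell-1)}V$ that the paper leaves implicit.
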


\noindent{\it Proof.} Since $U\to \overline{U}$ is an epimorphism the $U$ and
$\overline{U}$ orbits of $\overline{V}$ are the same. On the other
hand, the map $y^2V\to \overline{V}$ given by $y^2v\mapsto
v+y^{2(\ell-1)}V$ is a bijection compatible with the actions of
$U$, i.e., $y^2V$ and $\overline{V}$ are equivalent $U$-sets.
Thus, the number of $U$-orbits of $y^2V$ equals the number of
$U$-orbits of $\overline{V}$, which is the number of
$\overline{U}$-orbits of $\overline{V}$.\qed

\begin{note} It follows from Theorems \ref{comega}, \ref{notrivial}, \ref{cand}, and \ref{orbred} that
the number of irreducible constituents of $Top$ is equal to
the number of $U$-orbits of $V\setminus y^2 V$. This was independently verified in Theorem \ref{nexo}.
\end{note}

\begin{thm}
\label{totalorb} The number irreducible constituents of the Weil
module $X$ is equal to the number of orbits of $U$ acting on $V$.
This common number is:

\noindent $\bullet$ $q^\ell$ if $m=1$.

\noindent $\bullet$ $2q^{\ell}-1$ if $m=2$ and $h$ is of type $\{1,-\epsilon\}$.

\noindent  $\bullet$ $2(q^{\ell}+\cdots+q)+1$ if $m>2$ or $m=2$ and $h$ is of type $\{1,-1\}$.
\end{thm}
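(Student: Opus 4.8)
The plan is to combine the two halves of the decomposition already established: $X = Top \oplus Bot$, where $Top$ was analyzed completely in \S\ref{sectop}--\S\ref{seccount} and $Bot$ was identified in \S\ref{secbot}. By Theorem \ref{mulfree} the module $X$ is multiplicity free, so the number of irreducible constituents of $X$ equals the number of constituents of $Top$ plus the number of constituents of $Bot$. For the first equality of the statement I would proceed by induction on $\ell$. The base case $\ell=1$ is immediate: $Bot$ is the trivial $U$-module (one constituent) and by Theorem \ref{nexo} the number of constituents of $Top$ equals the number of $U$-orbits of $V\setminus y^2V = V\setminus\{0\}$; adding the single orbit $\{0\}$ gives the number of $U$-orbits on all of $V$. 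For $\ell>1$, Theorem \ref{nexo} gives that the number of constituents of $Top$ equals the number of $U$-orbits of $V\setminus y^2V$, while by Theorems \ref{cand} and \ref{orbred} together with the inductive hypothesis, the number of constituents of $Bot$ equals the number of $\overline{U}$-orbits on $\overline{V}$, which equals the number of $U$-orbits on $y^2V$. Since $V$ is the disjoint union of $V\setminus y^2V$ and $y^2V$, these add to the number of $U$-orbits on $V$, completing the induction.

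It remains to compute this orbit number explicitly in each of the three cases. By Theorem \ref{crux}, two primitive vectors are $U$-conjugate if and only if they have the same value $h(u,u)$, so the number of orbits of primitive vectors (i.e.\ orbits in $V\setminus yV$) is $K=|\Lambda|$, with $\Lambda$ described in Theorem \ref{tiso}. More generally, a nonzero vector $v\in V$ can be written $v = y^j u$ with $u$ primitive and $0\le j<2\ell$ uniquely determined, and the restriction-of-scalars argument from Proposition \ref{tot2} shows that $U$-conjugacy of $y^j u_1$ and $y^j u_2$ (same $j$) is governed by $h$ modulo an appropriate power of $y$; iterating that argument layer by layer, the number of orbits in $y^jV\setminus y^{j+1}V$ is the number of distinct values of $h(u,u)$ modulo the ideal $y^{2\ell-2j}A\cap R$ of $R$. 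I would compute these counts for each $j$ and sum over $0\le j<2\ell$, then add $1$ for the zero orbit. In the generic case $m>2$ (or $m=2$ of type $\{1,-1\}$), $\Lambda=R$, so the $j$-th layer contributes $|R/(x^{\ell-j}R)| - |R/(x^{\ell-j-1}R)| = q^{\ell-j}-q^{\ell-j-1}$ for $0\le j<\ell$ and, for $\ell\le j<2\ell$, the relevant ideal is zero so each layer (when nonempty) contributes its full count; careful bookkeeping yields $2(q^\ell+\cdots+q)+1$. The $m=1$ and $m=2$, type $\{1,-\epsilon\}$ cases use the smaller sets $\Lambda = R^{\times 2}$ or $R^\times$ from Theorem \ref{tiso}, and the same layer-counting gives $q^\ell$ and $2q^\ell-1$ respectively.

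A cleaner route to the explicit numbers avoids re-counting layers by hand: observe that the number of $U$-orbits on $V$ is the sum, over $0\le j\le 2\ell$, of the number of orbits on $y^jV\setminus y^{j+1}V$ (with $y^{2\ell}V=0$), and each such count has already effectively been recorded. For $j\ge 1$ the orbit count on $y^jV\setminus y^{j+1}V$ coincides, via scalar restriction, with the orbit count on primitive vectors for the hermitian form $\overline{h}$ over the quotient ring of nilpotency degree $\ell - \lceil j/2\rceil$, so one simply runs Corollary \ref{uva} (which already tabulates the number of constituents of $Top$, equivalently the number of orbits on $V\setminus y^2V$, in all three cases) down the chain of quotients and telescopes. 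Summing $q^{\ell'}-q^{\ell'-1}$, or $2(q^{\ell'}-q^{\ell'-1})$, or $2q^{\ell'}$ over the appropriate range of $\ell'$ (with multiplicities coming from the two $j$-values $2\ell'-1, 2\ell'-2$ mapping to nilpotency degree $\ell'$) plus the zero orbit produces the three stated formulas after a short geometric-series computation.

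The main obstacle is purely bookkeeping: one must keep straight, for each power $y^j$, exactly which quotient ring and which value set $\Lambda$ governs the layer $y^jV\setminus y^{j+1}V$, making sure the boundary cases ($j$ near $0$, near $\ell$, near $2\ell$; the special small-rank forms) are handled without off-by-one errors, and that the form type $\{1,-\epsilon\}$ is correctly tracked as one passes to quotients (Theorem \ref{xtipo} and the determinant criterion guarantee the type is inherited by $\overline{h}$). Once the layer counts are assembled correctly, the arithmetic is a routine finite geometric sum. No new conceptual input beyond Theorems \ref{crux}, \ref{tiso}, \ref{nexo}, \ref{mulfree}, \ref{cand}, \ref{orbred} and Corollary \ref{uva} is required.
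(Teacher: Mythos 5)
Your argument is correct and arrives at the same answer, but you take a more laborious route to the first equality (the identification of the number of irreducible constituents with the number of $U$-orbits on $V$) than the paper does. The paper gets this in one line: since Theorem \ref{comega} shows that the Weil character $\Omega$ satisfies $[\Omega,\Omega] = O_U(V)$, and Theorem \ref{mulfree} shows $X$ is multiplicity free, the inner product $[\Omega,\Omega]$ already \emph{is} the number of irreducible constituents, so the equality is immediate with no induction and no appeal to Theorems \ref{nexo}, \ref{cand}, or \ref{orbred}. Your inductive derivation of this equality — combining Theorem \ref{nexo} on the $Top$ side with Theorems \ref{cand} and \ref{orbred} plus the inductive hypothesis on the $Bot$ side — is logically sound and in fact mirrors the remark following Theorem \ref{orbred} (which cross-checks that the number of $Top$-constituents equals the number of orbits on $V\setminus y^2V$), but it re-derives something the paper has already packaged cleanly. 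For the explicit numerics, the paper simply runs the short induction you describe in your last paragraph: at each step $X$ contributes the three tabulated quantities from Corollary \ref{uva} for $Top$ plus the formula at $\ell-1$ for $Bot$, and the geometric sums telescope. Your middle paragraph's direct layer-by-layer count over all $y^jV\setminus y^{j+1}V$ is plausible but relies on an unproved extension of Proposition \ref{tot2} to general $j$ (the paper only establishes the $j=1$ case and handles $j\ge 2$ by recursion through $Bot$), so if you want to pursue that route you would need to justify the claimed description of orbits in each layer; the telescoping route avoids this entirely.
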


\noindent{\it Proof.} That the given quantities are the same
follows from Theorems \ref{comega} and \ref{mulfree}. The
remaining assertions follows by induction by means of Corollary
\ref{uva}.\qed

\section{Degrees of the irreducible constituents of $Top$}
\label{secdeg}

Let $\overline{R}=R/R\cap\i$ and
$\overline{A}=A/\i$. Then $\overline{A}$ inherits an
involution, say $a+\i\mapsto \widehat{a+\i}$, from $A$ whose fixed points form $\overline{R}$. Let
$\overline{V}=V/\i V$. Then $h$ gives rise to a
non-degenerate hermitian form $\overline{h}:\overline{V}\times
\overline{V}\to \overline{A}$, given by
$$
\overline{h}(v+\i V,
w+\i V)=h(v,w)+\i ,\quad v,w\in V.
$$

Note also that if $\ell$ is even then $R\cap\i=Rx^{\ell/2}$ and $\overline{A}$
is a ramified quadratic extension of $\overline{R}$. However, if $\ell$ is odd then
then $R\cap\i=Rx^{(\ell+1)/2}$ but
$\overline{A}$ is no longer a ramified quadratic extension of $\overline{R}$  since the classes of $y$ and $x^{(\ell-1)/2}$
multiply to 0 in $\overline{A}$.

Let $\overline{U}$ be unitary group associated to the hermitian
space $(\overline{V}, \overline{A})$. As shown in \cite{15}, Theorem 5.2, as
well as in \S 4 of \cite{8}, the canonical map $U\to \overline{U}$
is a group epimorphism.

Let $S$ be set of representatives of primitive vectors for the equivalence relation $\sim$ defined in \S\ref{sectop}.
Let $s\in S$. It follows from (\ref{indbv}), (\ref{k3}) and our description of $R\cap\i$ that
\begin{equation}
\label{indice}
[B(s):C(s)]=\begin{cases} 2q^{\ell/2} \text{ if }\ell\text{ is even},\\
2q^{(\ell-1)/2} \text{ if }\ell\text{ is odd}.\end{cases}
\end{equation}

Let $D(s)$ be the $\overline{U}$-pointwise stabilizer of $s+\i V$.
We see that $C(s)$ maps onto $D(s)$ under the epimorphism $U\to
\overline{U}$, so
$$
 [U:C(s)]=[\overline{U}:D(s)].
$$
Let $G$ be the group of linear characters of $N$ that are trivial on $N\cap (1+\i)$ and let $\phi\in
G$. It follows from Theorem \ref{dsm} that the degree of the irreducible constituent
$Top(\phi,s)$ of $Top$ is
$$\mathrm{deg}\, Top(\phi,s)=[\overline{U}:D(s)]/[B(s):C(s)],$$ where $[B(s):C(s)]$ is given in
(\ref{indice}).

The computation of $[\overline{U}:D(s)]$ is essentially equivalent to that of $|\overline{U}|$, a non-trivial problem
that will not be considered in this paper. However, if $\ell=1$ then $\overline{U}$ is an orthogonal group of rank $m$ over $F_q$,
whose order is well-known. This case is considered below.

\section{The case $\ell=1$}
\label{seclast}

We assume here that $\ell=1$. Thus $R=F_q$ and $A=F_q[y]$, $y^2=0$. Moreover, $\i=Ay$, so $R\cap\i=0$.
Thus the equivalence relation $\sim$ for primitive vectors considered in \S\ref{sectop} is given by: $u\sim v$ if $h(u,u)=h(v,v)$.
Hence, by Theorem \ref{crux}, a set $S$ of representatives for $\sim$ is a set of representatives for the $U$-orbits of $V\setminus yV$.

Let $\iota$ be the central involution of $U$ given by $v\mapsto -v$. Let $s\in S$. Then $B(v)=C(v)\times\{1,\iota\}$ by (\ref{indice}).
We may extend the linear character $\beta_v:C(s)\to\C^{\times}$ of \S\ref{con} to $B(s)$ in 2 ways, by
letting $\iota\mapsto \pm 1$. Let these extensions be denoted by
$\beta^{\pm}$.

The Weil module $X=Top\oplus Bot$ has dimension $q^m$, with $\dim\, Top=q^m-1$ and $\dim\, Bot=1$.
Let our transversal $T$ for $\i V$ in $V$ be symmetric,
in the sense that $v\in T$ if and only if $-v\in T$. In particular, $0\in T$. We see at once from (\ref{actu}) that $Bot=\C e_0$
is the trivial $U$-module. Let $Top^\pm$ be the eigenspaces for $\iota$ acting on $Top$. Then each of $Top^+$ and $Top^-$ has dimension $(q^m-1)/2$,
respectively spanned by $e_v+e_{-v}$ and $e_v-e_{-v}$ as $0\neq v$ runs through $T$.

We may also assume that $S$ is contained in $T$. For $s\in S$ let
$Top^\pm(s)$ be the $U$-submodule of $Top^\pm$ generated by
$e_s\pm e_{-s}$. Let $\delta\in F_q^{\times}\notin F_q^{\times 2}$. We then
have the following result.

\begin{thm} (a) $Top^+=\underset{v\in S}\oplus Top^+(s)$ and $Top^-=\underset{v\in S}\oplus Top^-(s)$.

(b) For $s\in S$, $Top^+(s)$ and $Top^-(s)$ are irreducible $U$-modules with characters $ind_{B(v)}^U\beta_v^+$ and $ind_{B(v)}^U\beta_v^-$.

(c) The Weil module $X$ is multiplicity free and has $2K+1$
irreducible constituents, namely the trivial $U$-module and the
$K$ constituents given in (a) for each of $Top^+$ and $Top^-$.
Here $K=q$ if $m>2$ or $m=2$ and $h$ is of type $\{1,-1\}$;
$K=q-1$ if $m=2$ and $h$ is of type $\{1,-\delta\}$; $K=(q-1)/2$
if $m=1$.
\end{thm}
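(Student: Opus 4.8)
The plan is to combine the general structure theory developed in Sections \ref{sectop}--\ref{secbot} with the simplifications that occur when $\ell=1$, namely that $\i=Ay$, $R\cap\i=0$, $\overline{R}=F_q$, and $\overline{A}$ is the polynomial ring $F_q[y]/(y^2)$ only up to the point that $N\cap(1+\i)$ becomes trivial. The key simplification is that $[B(s):C(s)]=2$ for all primitive $s$ by (\ref{indice}), and that $N/(N\cap(1+\i))$ has order $2$; by (\ref{indbv}) this common quotient is generated by the image of $\iota$, so $B(s)=C(s)\times\{1,\iota\}$ as asserted just before the statement. Consequently the group $G$ of linear characters of $N$ trivial on $N\cap(1+\i)$ has exactly two elements, corresponding to $\iota\mapsto 1$ and $\iota\mapsto -1$; under the identification in \S\ref{sectop} the two extensions $\phi\gamma_s$ of $\beta_s$ are precisely $\beta_s^+$ and $\beta_s^-$.

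For part (a) and (b), I would simply transcribe Theorem \ref{m3} and Theorem \ref{dsm} in this special case: Proposition \ref{sor} gives $Top=\oplus_{s\in S}Top(s)$, and Theorem \ref{dsm}(d) gives $Top(s)=Top(\phi_+,s)\oplus Top(\phi_-,s)$ for the two characters $\phi_\pm\in G$. The identification $Top(\phi_+,s)=Top^+(s)$ and $Top(\phi_-,s)=Top^-(s)$ follows from the explicit form of $E_{\phi,v}$ in \S\ref{sectop}: when $N/(N\cap(1+\i))=\{1,\iota\}$, the element $E_{\phi_\pm,s}$ is (up to a nonzero scalar, using (\ref{ela})) a multiple of $e_s\pm e_{-s}$, since $\iota e_s=e_{-s}$ by (\ref{actu}). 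Irreducibility of each $Top^\pm(s)$ and the fact that their characters are $ind_{B(s)}^U\beta_s^\pm$ is then immediate from Theorem \ref{dsm}(a),(b). Collecting the two decompositions over $\phi\in G^\pm$ recovers the decomposition of $Top^+$ and $Top^-$ as in Theorem \ref{m3}(b), with $G^+=\{\phi_+\}$ and $G^-=\{\phi_-\}$.

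For part (c): multiplicity-freeness of $X$ is Theorem \ref{mulfree}. Counting constituents, we use that $Bot=\C e_0$ is the trivial $U$-module (immediate from (\ref{actu}) since $h(g\cdot 0,0)=0$), while $Top$ has $|S|\cdot|G|=2|S|$ constituents by Theorem \ref{m3}(a), and $|S|=K$ because, with $R\cap\i=0$, Proposition \ref{hurra} says $\sim$ is equality of $h(u,u)$ on primitive vectors, so by Theorem \ref{crux} the set $S$ is exactly a set of orbit representatives for $U$ on $V\setminus yV$, whence $|S|=K=|\Lambda|$. The explicit values of $K$ are read off from Theorem \ref{tiso} with $\ell=1$ (so $R^{\times 2}$ has index $2$ in $R^\times=F_q^\times$): $K=(q-1)/2$ if $m=1$; $K=q-1$ if $m=2$ and $h$ is of type $\{1,-\delta\}$; $K=q$ if $m>2$ or if $m=2$ and $h$ is of type $\{1,-1\}$. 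This gives the stated count $2K+1$ and matches Corollary \ref{uva} (for the $Top$ part) plus one for $Bot$, consistent with Theorem \ref{totalorb}. Since all the serious work — stabilizer computation, Clifford-theoretic decomposition, orbit counting, multiplicity-freeness — has already been carried out in full generality in the earlier sections, the only genuine obstacle here is bookkeeping: matching the abstract parameters $(\phi,s)\in G\times S$ with the concrete eigenvectors $e_s\pm e_{-s}$ via (\ref{ela}), and carefully reading the three branches of Theorem \ref{tiso} at $\ell=1$.
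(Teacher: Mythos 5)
Your proposal is essentially correct and follows the same route the paper intends: the theorem in \S\ref{seclast} carries no displayed proof because it is meant as a direct specialization of Proposition \ref{sor}, Theorem \ref{dsm}, Theorem \ref{m3}, Theorem \ref{mulfree}, Theorem \ref{tiso} and Theorem \ref{crux} to $\ell=1$, exactly as you do. Two small misstatements in your preamble should be fixed but do not affect the argument: when $\ell=1$ we have $\overline{A}=A/\i\cong F_q$, not $F_q[y]/(y^2)$; and $N\cap(1+\i)=\{1+ry: r\in F_q\}$ has order $q$, so it is not trivial --- what you actually use, correctly, is that $N/(N\cap(1+\i))$ has order $2$ and is generated by the image of $\iota=-1$, so $[B(s):C(s)]=2$ by (\ref{indbv})--(\ref{indice}) and $|G|=2$. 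With that correction, the identification of $Top(\phi_\pm,s)$ with $Top^\pm(s)$ via (\ref{ela}) and the count $|S|=K=|\Lambda|$ via Proposition \ref{hurra} and Theorems \ref{crux} and \ref{tiso} go through exactly as you describe.
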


Let $s\in S$. We next turn our attention to computing the degree
of $Top^\pm(s)$.

Let us adopt the notation of \S\ref{secdeg}. Then the degree of
$Top^\pm(s)$ is $[\overline{U}:D(s)]/2$. We wish to determine
$[\overline{U}:D(s)]$. Here $O=\overline{U}$ is the orthogonal
group of the non-degenerate symmetric bilinear form
$b=\overline{h}$ defined on the $m$-dimensional vector space
$W=\overline{V}$ over $F_q=\overline{A}$.

The classification of non-degenerate symmetric bilinear forms over
$F_q$ is completely analogous to that of hermitian forms stated in
Theorem \ref{xtipo} (see \cite{9}, Theorem 6.9). In particular,
$h_1,h_2$ are equivalent if and only if
$\overline{h_1},\overline{h_2}$ are equivalent. Since the
equivalence types of $h$ and $b$ determine each other, we will
refer to only that of $b$ from now on. It is well-known (see
\cite{15}, Theorem 4.1, for example) that two non-zero vectors in
$W$ of the same $b$-length are $O$-conjugate. Let $t=h(s,s)\in
F_q$. Thus, all we have to do is find the index in $O$ of the
stabilizer $S_t$ of a non-zero vector $u$ of length $t$.

If $m=1$ then $t$ must be a unit, $|S_t|=1$ and $|O|=2$. Thus
$\deg(Top^\pm(s))=1$, as expected since $U$ is abelian in this
case.

Suppose henceforth $m\geq 2$. When $m$ is even the orders of the
groups associated with the two inequivalent quadratic forms are
different.  The group $O_m(q)$ is associated with the type
$\{1,-1,\dots,1,-1\}$ (type 1) and $O_m(q,\delta)$ is associated
with $\{1,-1,\dots,1,-\delta\}$ (type $\delta$). When $m$ is odd
there is just one orthogonal group $O_m(q)$ up to isomorphism, but
there are still two inequivalent quadratic forms:
$\{1,-1,\dots,1,-1,-1\}$ (type $1$), and
$\{1,-1,\dots,1,-1,-\delta\}$ (type $\delta$).  We will freely use
the formulas for the orders of these groups as given in \cite{9},
Theorem 6.17.

\begin{theorem}
Suppose $\ell=1$ and write $m=2r$ or $m=2r+1$ depending on the
parity of $m$, where $r\geq 1$.

Let $s \in S$ and let $t=h(s,s)\in F_q$. The degree of either of
the irreducible constituents $Top^\pm(s)$ of the Weil character of
$U_m(A)$ is given by one of the following:

$$\deg(Top^{\pm}(s)) = \dfrac{q^r(q^r + 1)}{2} \text{ in the following cases:} $$

(a1) $m=2r+1$, $b$ of type 1, $-1 \in F_q^{\times 2}$, $t \in F_q^{\times 2}$,

(a2) $m=2r+1$, $b$ of type 1, $-1 \in F_q^{\times} \setminus F_q^{\times 2}$,
$t \in F_q^{\times} \setminus F_q^{\times 2}$,

(a3) $m=2r+1$, $b$ of type $\delta$, $-1 \in F_q^{\times 2}$, $t \in
F_q^{\times} \setminus F_q^{\times 2}$, and

(a4) $m=2r+1$, $b$ of type $\delta$, $-1 \in F_q^{\times} \setminus
F_q^{\times 2}$, $t \in F_q^{\times 2}$.

$$\deg(Top^{\pm}(s)) = \dfrac{q^r(q^r - 1)}{2} \text{ in the following cases:} $$

(b1) $m=2r+1$, $b$ of type 1, $-1 \in F_q^{\times 2}$, $t \in F_q^{\times}
\setminus F_q^{\times 2}$,

(b2) $m=2r+1$, $b$ of type 1, $-1 \in F_q^{\times} \setminus F_q^{\times 2}$,
$t \in F_q^{\times 2}$,

(b3) $m=2r+1$, $b$ of type $\delta$, $-1 \in F_q^{\times 2}$, $t \in
F_q^{\times 2}$, and

(b4) $m=2r+1$, $b$ of type $\delta$, $-1 \in F_q^{\times} \setminus
F_q^{\times 2}$, $t \in F_q^{\times} \setminus F_q^{\times 2}$.

$$\deg(Top^{\pm}(s)) = \dfrac{q^{r-1}(q^r + 1)}{2} \text{ if } $$

(c) $m=2r$, $b$ is of type $\delta$, and $t \in F_q^{\times}$.

$$\deg(Top^{\pm}(s)) = \dfrac{q^{r-1}(q^r - 1)}{2} \text{ if } $$

(d) $m=2r$, $b$ is of type 1, and $t \in F_q^{\times}$.

$$\deg(Top^{\pm}(s)) = \dfrac{(q^r-1)(q^{r-1} + 1)}{2} \text{ if }$$

(e) $m=2r$, $b$ is of type 1, and $t=0$.

$$\deg(Top^{\pm}(s)) = \dfrac{(q^r+1)(q^{r-1} - 1)}{2} \text{ if } $$

(f) $m=2r>2$ and $b$ is of type $\delta$, and $t=0$.

$$\deg(Top^{\pm}(s)) = \dfrac{(q^{2r}-1)}{2} \text{ if } $$

(g) $m=2r+1$, $b$ is of either type, and $t=0$.
\end{theorem}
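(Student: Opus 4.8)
The plan is to reduce the computation of $\deg(Top^{\pm}(s))$ entirely to counting vectors of a prescribed length in the orthogonal space $(W,b)=(\overline{V},\overline{h})$ over $F_q$, and then to invoke the classical formula for the number of representations of a scalar by a nondegenerate quadratic form. Specializing \S\ref{secdeg} to $\ell=1$, one has $R\cap\i=0$ and hence $[B(s):C(s)]=2$ by (\ref{indice}), so
\[
\deg(Top^{\pm}(s))=[\overline{U}:D(s)]/2,
\]
where $t=h(s,s)\in F_q$ and $D(s)$ is the stabilizer $S_t$ in $O=O(W,b)$ of the vector $\overline{s}\in W$; note $\overline{s}\ne 0$ (as $s$ is primitive) and $b(\overline{s},\overline{s})=t$. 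For $m=1$ this gives $\deg(Top^{\pm}(s))=|O|/2=1$, as already observed, so assume $m\ge 2$. By \cite{15}, Theorem 4.1 (the field case of Theorem \ref{crux}) together with Witt's theorem, all nonzero vectors of a given $b$-length form one $O$-orbit, so $[O:S_t]=N(t):=\#\{w\in W:b(w,w)=t\}$ when $t\ne 0$, and $[O:S_0]=N(0)-1$. Thus it remains to compute $N(t)$ and halve.

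I would then quote the standard count (see, e.g., \cite{9}): writing $\eta$ for the quadratic character of $F_q^{\times}$ and $\Delta$ for the discriminant of $b$, for $\dim W=2r$ one has $N(t)=q^{2r-1}-\eta((-1)^r\Delta)q^{r-1}$ if $t\ne 0$ and $N(0)=q^{2r-1}+(q-1)\eta((-1)^r\Delta)q^{r-1}$, while for $\dim W=2r+1$ one has $N(t)=q^{2r}+\eta((-1)^r\Delta\, t)q^{r}$ if $t\ne 0$ and $N(0)=q^{2r}$. The discriminant of the type $1$ form is $(-1)^r$ in even dimension and $(-1)^{r+1}$ in odd dimension, and that of the type $\delta$ form differs from it by the non-square $\delta$; substituting this, and recalling that $-1\in F_q^{\times 2}$ is the same as $\eta(-1)=1$, turns $N(t)$ into an explicit expression in the data ($m\bmod 2$, type of $b$, $\eta(-1)$, $\eta(t)$).

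Finally I would match the cases. For even $m=2r$, $t\ne 0$: $N(t)/2$ equals $q^{r-1}(q^r-1)/2$ for type $1$ and $q^{r-1}(q^r+1)/2$ for type $\delta$ --- cases (d), (c). For even $m$, $t=0$: $(N(0)-1)/2$ equals $(q^r-1)(q^{r-1}+1)/2$ for type $1$ (case (e)) and $(q^r+1)(q^{r-1}-1)/2$ for type $\delta$ (case (f)); the restriction $m>2$ in (f) is needed because this number is $0$ when $r=1$, consistent with Theorem \ref{tiso}(b). For odd $m=2r+1$, $t=0$: $(q^{2r}-1)/2$, independent of the type --- case (g). For odd $m$, $t\ne 0$: a short computation shows $\eta((-1)^r\Delta\, t)=\eta(-1)\eta(t)$ for type $1$ and $-\eta(-1)\eta(t)$ for type $\delta$, so $N(t)=q^{2r}\pm q^{r}$ according to the four sign combinations in each of the two types, and halving gives $q^{r}(q^{r}+1)/2$ in cases (a1)--(a4) and $q^{r}(q^{r}-1)/2$ in cases (b1)--(b4). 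The only delicate point is this last bookkeeping --- keeping the discriminant conventions, the parity factor $(-1)^r$, and the value of $\eta(-1)$ consistently aligned across the eight odd-dimensional subcases; everything else is a direct substitution.
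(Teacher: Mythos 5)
Your proposal is correct, and it reaches the answer by a route that is genuinely different in organization from the paper's, although of course the two are equivalent via orbit-stabilizer. The paper computes $|S_t|$ directly: for $t\in F_q^{\times}$ it identifies $S_t$ with the orthogonal group of the complement $u^\perp$, does a case analysis on the type of that $(m-1)$-dimensional form (which is where the dependence on $\eta(-1)$, $\eta(t)$, and the type of $b$ enters, via which sign can be absorbed into the basis), and for $t=0$ it uses the known formula $|S_0|=q^{m-2}|O_{m-2}|$ for the stabilizer of an isotropic vector; it then divides explicit orthogonal group orders. You instead use orbit-stabilizer to replace $[O:S_t]$ by an orbit size, invoke Witt's extension theorem (together with Theorem \ref{crux} for the nonzero-length case and the isotropy of the form for the $t=0$ case when $m\geq 3$ or $m=2$ of type $1$) to identify that orbit with the full set of vectors of length $t$ (minus the origin when $t=0$), and then apply the classical representation-number formula $N(t)$ expressed via the quadratic character $\eta$ and the discriminant $\Delta$. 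This avoids having to track the type of the stabilizing subgroup through the case analysis; the entire dependence on $m\bmod 2$, the type of $b$, $\eta(-1)$ and $\eta(t)$ is packaged in a single factor $\eta\!\left((-1)^{\lfloor m/2\rfloor}\Delta\right)$ or $\eta\!\left((-1)^{r}\Delta\, t\right)$. I verified your discriminant conventions ($\Delta=(-1)^r$ or $(-1)^r\delta$ in even dimension, $(-1)^{r+1}$ or $(-1)^{r+1}\delta$ in odd dimension) and checked all eleven cases (a1)--(g) against the stated degrees; they match, and your observation about the vacuity of case (f) at $r=1$ is exactly the reason the paper excludes $m=2$ there. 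The one implicit point worth stating explicitly is that for $t=0$ the identification $[O:S_0]=N(0)-1$ requires the existence of a nonzero isotropic vector, which holds exactly when the form is isotropic -- automatic for $m\geq 3$, and for $m=2$ only in type $1$, which is consistent with Theorem \ref{tiso}(b) ruling out $t=0$ for type $\delta$ when $m=2$.
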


\begin{proof}
We wish to find the index in $O$ of the stabilizer $S_t$ of a
vector $u=s + \i V\in W$ of length $b(u,u)=t$.

Suppose $t$ is a unit. Then $|S_t|=|O'|$, where $O'$ is the
orthogonal group of the orthogonal complement to $u$ in $W$.

When $m=2$ then $|S_t|=2$. Now if $b$ is of type $\{1,-1\}$ then
$O=O_2(q)$ has order $2(q-1)$, while if $b$ is of type
$\{1,-\delta\}$  then $O=O_2(q,\delta)$ has order $2(q+1)$. Thus
$$\deg(Top^{\pm}(s)) = \begin{cases} \frac{q-1}{2} & \text{ if $W$ has type $\{1,-1\}$, and } \\ \frac{q+1}{2} & \text{ if $W$ has type $\{1,-\delta\}$.} \end{cases}
$$

For $m \ge 3$, we need to keep track of the type of $b$ even when
$m$ is odd, as the Weil character depends on the form, not just
its isometry group.  We can alter $b$ in a manner that preserves
the determinant modulo squares. We may freely replace $t$ by $k^2
t$ for any $k\in F_q^{\times}$ as the stabilizers of $u$ and $ku$ are
identical.

Suppose $m=2r+1$ and $t$ is a square. Assume $b$ is of type
$\{1,-1,\dots,1,-1,-1\}$. If $-1$ is a square we may take $u$ to
be the last vector in the basis. Since the stabilizer of $u$ in
$O$ acts as the full symmetric group on the orthogonal complement
of $u$ in $W$, then $S_t$ is isomorphic to an orthogonal group for
a quadratic form of type $\{1,-1,\dots,1,-1\}$. Therefore,
$|S_t|=|O_{2r}(q)|$ and we have
$$[O:S_t]=\dfrac{|O_{2r+1}(q)|}{|O_{2r}(q)|} = q^r(q^r+1).$$
If $-1$ is not a square then $-\delta\in F_q^{\times 2}$. Convert the
type of $b$ to $\{1,-1,\dots,1,-\delta,-\delta\}$ and take $u$ to
be the last vector in this basis. Then $S_t$ is isomorphic to the
orthogonal group of type $\{1,-1,\dots,1,-\delta\}$, and we have
$$[O:S_t]=\dfrac{|O_{2r+1}(q)|}{|O_{2r}(q,\delta)|} = q^r(q^r-1).$$
Assume next $b$ has type $\{1,-1,\dots,1,-1,-\delta\}$. If $-1$ is
a square, we can take $u$ to be the second-to-last vector in this
basis, so $S_t \simeq O_{2r}(d,\delta)$. If $-1$ is not a square,
then $-\delta$ will be a square and we have $S_t \simeq
O_{2r}(q)$.

Using the same techniques as above, we can find the value of
$[O:S_t]$ when $m$ is odd and $t$ is not a square in each of the
cases of $b$ of type 1 or $\delta$, $-1$ a square or non-square.
We leave these details to the reader.

Now suppose $m=2r$ is even.  By making suitable choices for $u$
and then eliminating it as above, we find that $S_t \simeq
O_{2r-1}(q)$ in all cases. So the calculation of $[O:S_t]$ only
depends on which type of $b$ we started with.

Suppose next $t=0$. Then there is a basis $u,v,w_1,\dots,w_{m-2}$
of $W$ such that
$$b(u,u)=0=b(v,v),b(u,v)=1, b(u,w_i)=0=b(v,w_i)$$
and the stabilizer $S_0$ of $u$ in $O$ has order
$$
|S_0|=q^{m-2}|O'|,
$$
where $O'$ is the orthogonal group of rank $m-2$ associated to the
restriction $b'$ of $b$ to the span $W'$ of $w_1,\dots,w_{m-2}$
(see \cite{21}, pg. 72). In particular, since $\mathrm\{u,v\}$ is
a hyperbolic plane (and hence of type $\{1,-1\}$), the form $b'$
has the same type as $b$. Note that when $m=2$ this says that
$|S_0|=1$. If $m$ is odd, then
$$[O:S_0] = \dfrac{|O_m(q)|}{q^{m-2}|O_{m-2}(q)|} = q^{m-1}-1.$$
If $m$ is even and $b$ has type 1 (this is the only possibility
when $m=2$), then
$$[O:S_0] = \dfrac{|O_{2r}(q)|}{q^{2r-2}|O_{2(r-1)}(q)|} = (q^r-1)(q^{r-1}+1). $$
If $m$ is even and $W$ has type $\delta$, then
$$[O:S_0] = \dfrac{|O_{2r}(q,\delta)|}{q^{2r-2}|O_{2(r-1)}(q,\delta)|} = (q^r+1)(q^{r-1}-1).$$
This completes the proof of the theorem, since the degrees of the
corresponding irreducible characters are obtained simply by
dividing $[O:S_0]$ by $2$.
\end{proof}

\end{document}